\title{Universality results for random matrices over finite local rings}
\author{Nikita Lvov\footnote{nikita.lvov@mail.mcgill.ca}}
\newcounter{Chapcounter}
\newcommand{\chapter}[1] 
{ {\centering          
  \addtocounter{Chapcounter}{1} \huge \textbf{ \color{black} Chapter \theChapcounter: ~#1} }   
  \addcontentsline{toc}{section}{ \color{black} Chapter:~\theChapcounter~~ #1}    
}
  \newtheorem{theorem}{Theorem}[section]
  \newtheorem{lemma}[theorem]{Lemma}
  \newtheorem{prop}[theorem]{Proposition}
  \newtheorem*{claim}{Claim}
  \newtheorem{sublemma}{Lemma}[lemma]
  \newtheorem*{corollary}{Corollary}
  \newcommand{\probP}{\text{I\kern-0.15em P}}
  \newcommand{\probE}{\text{I\kern-0.15em E}}
  \newcommand{\coker}{\text{coker}}
  \newcommand{\defeq}{\overset{\mathrm{def}}{=\joinrel=}}
  \numberwithin{equation}{section}
  \theoremstyle{remark}
  \newtheorem*{remark}{Remark}
  \theoremstyle{remark}
  \newtheorem*{definition}{Definition}
  \newcommand{\rows}{l}
  \newcommand{\cols}{k}
  \newcommand{\Q}{\mathbb{Q}}
  \newcommand{\Z}{\mathbb{Z}}
  \newcommand{\Hom}{\text{Hom}}
  \newcommand{\supp}{\pi}
    \newenvironment{mat}
    {
    \left[
\begin{array}}
{ \end{array}
\right]
}
\newcommand{\probM}{\mathcal{M}}
\newcommand{\probU}{\mathcal{U}}
\newcommand{\R}{\mathbb{R}}
\newcommand{\im}{\text{im}}
\newcommand{\vvector}{v_2}
\newcommand{\uvector}{v_1}
\newtheorem*{theorem*}{Theorem}
\newenvironment{ftheo*}
  {\begin{mdframed}\begin{theorem*}}
  {\end{theorem*}\end{mdframed}}
\newcommand{\ssfrac}[2]{#1 \Big/ #2}
\newtheorem{Corollary}{Corollary}[theorem]
\newtheorem*{goal}{Goal}
   \DeclareSymbolFont{bbold}{U}{bbold}{m}{n}
   \DeclareSymbolFontAlphabet{\mathbbold}{bbold}
   \newcommand{\one}{\mathbbold{1}}
   \theoremstyle{remark}
 \theoremstyle{definition}
 \newcommand{\comm}[1]{} 
 \newcommand{\lone}{L^1(X_0)}
 \newcommand{\prob}{\probP}
 \newcommand{\commm}[1]{}
 \newcommand{\marginparr}{\commm}
 \newcommand{\sspace}{\newline
 \newline
 \noindent}
\renewcommand{\rows}{n}
\renewcommand{\cols}{n+u}
\newcommand{\Mm}{M}
\newcommand{\Mmodule}{\mathbf{M}}
\newcommand{\projj}{\text{proj}}
\newcommand{\One}{\mathlarger{\mathlarger{\one}}}
\newcommand{\probMM}{
\mathcal{M}_{\rows,\cols}
}
\newcommand{\probMMM}{\mathcal{N}}
\newcommand{\linf}[1]{ \Big| \Big| #1 \Big| \Big|_{l^\infty} }
\renewcommand{\lone}[1]{ \Big| \Big| #1 \Big| \Big|_{l^1} }
\newcommand{\ltwo}[1]{
\Big| \Big| #1 \Big| \Big|_{l^2}
}
\newcommand{\largematrix}[1]{
\begin{mat}{ccc|ccc}
&&&*&\hdots&\\
&#1&&\vdots&&\\
&&&*&\hdots&\\ \hline
*&&*&*&\hdots&\\
\vdots&&\vdots&\vdots&\ddots&\\
&&&&&
\end{mat}
}
\renewcommand{\lone}[1]{
\Big| \Big| #1 \Big| \Big|_{l^1}
}
\newcommand{\maxmod}{
\mathfrak{m}
}
\newcommand{\maxbound}
{
\max
\left[
\linf{\xi \mod \maxmod}
\,
,
\,
\frac{1}{ char(R/\maxmod) }
\right]
}
\renewcommand{\marginparr}{\commm}
\begin{document}


\maketitle

\abstract{Let $R$ be a finite local ring. We prove a quantitative
universality statement for the cokernel of random matrices with i.i.d.
entries valued in $R$.
Rather than use the moment method, we use the Lindeberg replacement
technique. This approach also yields a universality result for several
invariants that are finer than the cokernel, such as the
determinant.}

\tableofcontents
\newpage
\section{Overview}

Cokernels of random matrices over $\Z_p$ have recently been a topic of active research. Historically the first random model to be studied was the Haar random matrix model \cite{FriedmanWashington1989}. In this case, the distribution of the cokernel can be computed explicitly, for any matrix dimension. The reason for this is the large amount of symmetry possessed by the Haar model.

\paragraph{} Subsequently, in the work of Wood and others (\cite{Maples1}, \cite{WoodIntegral}, \cite{WoodNguyen} ), it was found that the distribution of the cokernel of any large square i.i.d. matrix asymptotically has the same distribution as in the Haar case, provided that a necessary condition is met - the distribution of the entries must not be supported on the translate of the maximal ideal of $\Z_p$. This is the phenomenon of universality for random $p$-adic matrices. 

\paragraph{}The purpose of this paper is to prove universality for i.i.d. matrices over any finite local ring $R$, under a slightly stronger condition - the distribution of the entries must not be supported on the translate of the maximal ideal of $R$, or the translate of a subring of $R$. The technique we use is the Lindeberg Replacement Strategy. This is allows to establish an exponential convergence rate for the total variation distance between the i.i.d. case and the Haar case. Furthermore, this technique allows us to prove universality for invariants other than the cokernel, such as the determinant and the span of the column vectors.

\paragraph{}During the preparation of this manuscript, a related work by J. Shen \cite{JiaheShen} appeared on the arXiv. Shen independently developed a proof methodology similar to the one presented here. This methodology is applied in \cite{JiaheShen} to establish quantitative universality for the distribution of cokernels of symmetric and anti-symmetric $p$-adic random matrices. Shen's results for the symmetric case are particularly striking, and are beyond the reach of the methods developed in the present paper.

\subsection{Introduction}

We will now go into more detail. Firstly, we discuss Haar random matrices over $\Z_p$, that is matrices whose entries are uniformly distributed and independent elements.

\paragraph{Matrices with Haar random entries}
\begin{enumerate}
\item[] \textit{Square matrices over $\Z_p$.} First, let $\mathcal{U}_{n,m}$ be an $n \times m$ matrix over
$\Z_p$, whose
entries are sampled at random from the Haar measure. A theorem of Friedman and
Washington describes the asymptotic distribution of
$coker(\mathcal{U}_{n,n})$:

\begin{theorem*} \cite[Proposition 1]{FriedmanWashington1989}
\label{thm:friedmanwashington}
\begin{equation}
\label{eqn:friedmanwashington}
\lim_{n \rightarrow \infty} \probP\Big( coker(\mathcal{U}_{n,n}) \cong A \Big)
=
\frac{c_0}{|Aut(A)|}
\end{equation}
where
\[
c_0=\prod_{i=1}^{\infty} \left( 1 - \frac{1}{p^{i}}  \right)
\]
\end{theorem*}

The distribution (\ref{eqn:friedmanwashington}) on $p$-groups is known
as the Cohen-Lenstra distribution \cite{CohenLenstra}.

\item[] \textit{Rectangular matrices over $\Z_p$.} The derivation that establishes \cite[Proposition 1]{FriedmanWashington1989} can be generalized to non-square matrices to give:
\begin{equation}
\label{eqn:friedmanwashingtonu}
\lim_{n \rightarrow \infty} \probP
\Big(
coker(\mathcal{U}_{n,n+u})
\cong
A
\Big)
=
\frac{c_u}{|A|^{u} |Aut(A)|}
\hspace{0.5in} \text{ for $u \geq 0$}
\end{equation}
where
\[
c_u = \prod_{i=u+1}^{\infty} \left( 1 - \frac{1}{p^i} \right)
\]
\item[] \textit{Rectangular matrices over a finite local ring $R$.} Finally, from the recent work of Sawin and Wood
\cite[Lemma 6.7 and Lemma 6.6]{SawinWood}, we can deduce a formula valid for any
\textit{finite} local ring $R$. We consider an $n \times (n+u)$ matrix over $R$,
whose entries are independent and uniformly distributed. We again
denote this matrix as $\probU_{n,n+u}$. \cite[Lemma 6.7]{SawinWood} implies
that for $u>0$, and any finite local ring $R$,
\[
\lim_{n \rightarrow \infty} \probP(coker(\probU_{n,n+u})=A)
=
\]
\begin{equation}
\label{eqn:sawinwood}
\frac{1}{|A|^u |Aut(A)|}
\prod^{\infty}_{i=d(A)+u+1} \left(1 - \frac{1}{q^i} \right)
\end{equation}
where $q$ is the cardinality the residue field of $R$.
$d(A)$ is defined to be the difference between the number of relations
and the number of elements in the minimal presentation of
$A$, negative if there are more relations than
elements\footnote{For example, $d(R^3)=3$.}.
\newline
\newline
In fact, (\ref{eqn:friedmanwashington}) and (\ref{eqn:friedmanwashingtonu}) can both be deduced from (\ref{eqn:sawinwood}).
\end{enumerate}

\subsubsection{Universality - the main results of this paper}

Firstly, we recall the statement that we wish to generalize. 
As found in the work of Maples, Wood and Nguyen (\cite{Maples1},
\cite{WoodIntegral}, \cite{WoodNguyen}), the conclusion of
(\ref{eqn:friedmanwashington}) continues to hold when we replace the
matrix $\mathcal{U}_{n,n}$ by \textit{any} i.i.d. random matrix, under the
necessary condition that the distribution
of the entries
is non-degenerate modulo $p$. This is an instance of the general
phenomenon of universality. We refer to \cite{WoodICM} for a survey. 
\newline
\newline
Henceforth, $R$ will be a fixed finite local ring, and $u$ will be
an integer. The integer $u$ may be negative.
\newline
\newline
In this paper, we consider a $n \times (n+u)$ random matrix
$\mathcal{M}_{n,n+u}$ over $R$. We assume that
the entries are i.i.d. random variables. We assume that their
distribution is not
concentrated on the translate of a subring, or the translate of an
ideal of $R$.
\newline
\newline
We prove an estimate, which we call the column-swapping
estimate (\autoref{thm:quantheorem}). This estimate will immediately imply the following universality theorem:

\marginparr{Maybe include the explicit statement of universality for
the cokernel}

\begin{theorem}
\label{thm:firsttheorem:estimate}
Let $\probM_{n,n+u}$ and $\probU_{n,n+u}$ be the random matrices previously defined. We have the following asymptotic statements:
\begin{itemize}
\item[(A)] For any $u \in \Z$, the total variation distance between
\[
coker(\probM_{n,n+u}) \text{ and } coker(\mathcal{U}_{n,n+u})
\]
tends to $0$ as $n \rightarrow \infty$.
\item[(B)] The total variation distance between the joint distribution of
\[
   coker(\probM_{n,n}) \, , \, det(\probM_{n,n})
\]
and the joint distribution of
\[
coker(\mathcal{U}_{n,n}) \, , \,  det(\mathcal{U}_{n,n})
\]
tends to $0$ as $n \rightarrow \infty$.
\item[(C)] Both of the above statements hold if we replace the
cokernel by the span of the column vectors.
\end{itemize}

In all of the above cases, the total variation distance is bounded above by $C  \theta^n$, for any $\theta$ satisfying the inequality (\ref{eqn:thetaequation}). The constant $C$ depends on $\theta$, $u$, $R$ and the distribution of the entries of $\probM_{n,n+u}$. 
\end{theorem}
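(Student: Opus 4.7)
The plan is to derive \autoref{thm:firsttheorem:estimate} from the column-swapping estimate \autoref{thm:quantheorem} via a telescoping Lindeberg-style hybrid argument. Define intermediate matrices $\probM^{(k)}$ for $0 \leq k \leq n+u$, whose first $k$ columns are i.i.d.\ copies of the entry distribution of $\probM_{n,n+u}$ and whose remaining $n+u-k$ columns are independent Haar-uniform columns in $R^n$; then $\probM^{(0)} = \probU_{n,n+u}$ and $\probM^{(n+u)} = \probM_{n,n+u}$, and each consecutive pair differs in exactly one column. Assuming that each single-column swap incurs a total variation cost of at most $C_0 \theta^n$ on the joint invariant under consideration (the content of \autoref{thm:quantheorem}), the triangle inequality yields
\[
d_{TV}\bigl(\mathrm{inv}(\probM_{n,n+u}),\, \mathrm{inv}(\probU_{n,n+u})\bigr) \leq (n+u)\, C_0\, \theta^n,
\]
and the polynomial factor is absorbed into any strictly larger rate $\theta'$ satisfying (\ref{eqn:thetaequation}).

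For this single argument to cover parts (A), (B), (C) simultaneously, I would phrase the column-swapping estimate as a bound on the joint distribution of $(\mathrm{coker}, \det, \mathrm{colspan})$. This is natural because each of these invariants is a function of the ordered column tuple that does not obstruct one-at-a-time replacement: the column span and cokernel are permutation-invariant, while the determinant is alternating. Conditioning on all columns except the one being swapped reduces the analysis to a comparison of two distributions on $R^n$ pushed forward through the map sending the swapped column to the conditional joint invariant.

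The main obstacle is proving the column-swapping estimate itself. The natural route is Fourier analysis on the additive group $R^n$: after conditioning on the other columns, the relevant total variation difference decomposes as a sum over non-trivial characters of $R^n$, each contributing a product of $n$ single-entry character sums of the entry distribution of $\probM_{n,n+u}$. The non-degeneracy hypothesis, that the entry distribution is not supported on a translate of a subring or of the maximal ideal of $R$, is precisely the condition guaranteeing that every non-trivial single-entry character sum has modulus bounded strictly away from $1$ by some $\theta < 1$; multiplicativity across the $n$ rows then produces the $\theta^n$ decay. The delicate part is controlling the conditioning: one must show that with overwhelming probability over the other $n+u-1$ columns, the linear map from the swapped column to the joint invariant activates enough non-trivial characters on $R^n$ that the Fourier estimate is uniform, so that bad conditionings (where too many rows of this map vanish modulo the maximal ideal, for instance) contribute negligibly. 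This rank/genericity step is the technical heart of the argument and the point at which the ``subring translate'' half of the hypothesis is used in addition to the ``maximal ideal translate'' half.
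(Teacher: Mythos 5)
Your overall architecture is the same as the paper's: telescope from $\probU_{n,n+u}$ to $\probM_{n,n+u}$ one column at a time, charge each swap to the column-swapping estimate (in the form that tolerates the remaining entries being uniform, which \autoref{thm:qualtheorem} explicitly provides and which your hybrid matrices $\probM^{(k)}$ require), and absorb the factor $n+u$ into a slightly larger $\theta$ still satisfying (\ref{eqn:thetaequation}). That is exactly the paper's deduction. Your third paragraph, by contrast, is not needed for this theorem: it is an attempt to re-prove \autoref{thm:quantheorem} itself, which is the content of \S\ref{sec: random matrix inequality} and \S\ref{sec: decomposing measures}, and you leave its decisive ``genericity'' step unproven, so it cannot be counted as part of the proof.

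The genuine gap is the bridge from \autoref{thm:quantheorem} to your per-swap claim. \autoref{thm:quantheorem} does not bound the swap cost ``on the joint invariant''; it bounds the quantity obtained after both candidate columns have been averaged over cosets of the image of the other columns, namely the average over $M$ of the total variation distance between $\vvector$ plus an independent uniform element of $\mathrm{im}(M)$ and $\uvector$ plus an independent uniform element of $\mathrm{im}(M)$. Your stated justification --- permutation-invariance of cokernel and span, alternating-ness of the determinant, plus conditioning on the other columns and pushing forward through the invariant map --- does not produce anything of size $\theta^n$: conditionally on the other columns, the raw laws of an i.i.d.\ column and of a uniform column are at total variation distance bounded away from zero, so the data-processing inequality alone is useless. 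The property actually needed (and used by the paper) is that cokernel, column span, and determinant are unchanged when an arbitrary $R$-linear combination of the other columns is added to the swapped column, i.e.\ invariance under the right action of $SL_{n+u}$ (unipotent column operations suffice); this lets you replace each candidate column by its $\mathrm{im}(M)$-coset average without changing the law of the invariant, after which the per-swap cost is exactly the quantity that \autoref{thm:quantheorem} controls. With that observation inserted, your telescoping argument becomes the paper's proof; without it, the step ``the content of \autoref{thm:quantheorem}'' asserts something the cited theorem does not say.
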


Statement (A) has the following corollary:

\begin{corollary}
When $u\geq 0$, and $A$ is any $R$-module, the asymptotic value of
$\probP(coker(\probM_{n,n+u}) \cong A)$ is given by (\ref{eqn:sawinwood}).
\end{corollary}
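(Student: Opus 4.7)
The plan is to derive this corollary directly from part (A) of Theorem \ref{thm:firsttheorem:estimate} together with the Haar limit (\ref{eqn:sawinwood}). For a fixed $R$-module $A$, the singleton $\{A\}$ is a single atom in the space of isomorphism classes of finitely generated $R$-modules, so I would write
\[
\bigl|\probP(coker(\probM_{n,n+u}) \cong A) - \probP(coker(\mathcal{U}_{n,n+u}) \cong A)\bigr|
\leq d_{TV}\bigl(coker(\probM_{n,n+u}),\, coker(\mathcal{U}_{n,n+u})\bigr).
\]
By statement (A), the right-hand side tends to zero, and in fact decays exponentially in $n$, so the two sequences of probabilities share the same limit whenever either one converges.

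Next, I would read off the limit on the Haar side from (\ref{eqn:sawinwood}). For $u > 0$ this gives the claimed value immediately. For $u = 0$ one must check that the same formula applies; this is the natural finite-local-ring extension of the Friedman--Washington identity (\ref{eqn:friedmanwashington}), which can either be quoted from \cite{SawinWood} or recovered by the same argument that extracts (\ref{eqn:friedmanwashingtonu}) from the work cited there. Inserting this Haar limit into the inequality above, and using the triangle inequality with the (exponentially small) total variation bound, pins down the asymptotic value of $\probP(coker(\probM_{n,n+u}) \cong A)$.

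There is essentially no obstacle here, since all of the real work has been packaged into Theorem \ref{thm:firsttheorem:estimate}: the deduction is completely formal once the total variation distance between the i.i.d.\ and Haar cokernels is known to vanish. The only point that demands a moment's care is the $u = 0$ boundary case of the Haar formula, which lies outside the range in which (\ref{eqn:sawinwood}) was stated, but this is standard and can be handled by direct reference to \cite{SawinWood}.
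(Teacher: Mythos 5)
Your argument is exactly the deduction the paper intends (and leaves implicit): statement (A) of \autoref{thm:firsttheorem:estimate} bounds the difference of the two probabilities at the atom $A$ by the total variation distance of the cokernel distributions, which tends to zero, and the Haar-side limit is read off from (\ref{eqn:sawinwood}). Your flag about the $u=0$ boundary case, which the paper quotes from \cite{SawinWood} only for $u>0$, is a fair point of care and resolves just as you indicate, so the proposal matches the paper's route.
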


Statement (B) has the following corollary:
\begin{corollary}
$det(\probM_{n,n})$ has the same asymptotic distribution as $det(\probU_{n,n})$.
\end{corollary}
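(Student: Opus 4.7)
The plan is to deduce this corollary almost immediately from Statement (B) of \autoref{thm:firsttheorem:estimate}, using the fact that total variation distance is non-increasing under measurable pushforwards (in particular, under the projection from a joint distribution onto a marginal).

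More concretely, I would proceed as follows. First, invoke Statement (B) to obtain that
\[
d_{TV}\Big(\, (coker(\probM_{n,n}), det(\probM_{n,n})),\ (coker(\mathcal{U}_{n,n}), det(\mathcal{U}_{n,n})) \,\Big) \to 0
\]
as $n \to \infty$, with the explicit exponential rate $C \theta^n$. Second, consider the projection map onto the second coordinate, which sends a pair $(G, d)$ to $d$. Since total variation distance cannot increase under the pushforward by any measurable map, applying this projection yields
\[
d_{TV}\Big(\, det(\probM_{n,n}),\ det(\mathcal{U}_{n,n}) \,\Big) \leq d_{TV}\Big(\, (coker(\probM_{n,n}), det(\probM_{n,n})),\ (coker(\mathcal{U}_{n,n}), det(\mathcal{U}_{n,n})) \,\Big),
\]
and the right-hand side tends to $0$. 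Therefore the marginal distribution of $det(\probM_{n,n})$ converges in total variation to that of $det(\mathcal{U}_{n,n})$, which is a stronger statement than mere equality of asymptotic distributions.

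There is no genuine obstacle here: the only content is the monotonicity of $d_{TV}$ under projections, which is a standard fact (one can verify it directly by writing $d_{TV}$ as a supremum over measurable sets and pulling back along the projection). The substantive work in establishing the distributional agreement of the determinants is already absorbed into the proof of Statement (B); the corollary simply records the marginal consequence.
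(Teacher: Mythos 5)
Your argument is correct and is exactly how the paper obtains this corollary: the paper states it as an immediate consequence of Statement (B), with the passage from the joint distribution to the determinant marginal (monotonicity of $d_{TV}$ under pushforward) being the only step needed. Your version even records the quantitative $O(\theta^n)$ rate, which is consistent with the paper's discussion of convergence rates.
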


\subsubsection{Method of proof}

Most current proofs of universality for random matrices, over finite
and pro-finite rings, use the moment method, which first appeared in
\cite{WoodMoments}. We use a different approach, the Lindeberg
replacement technique of \cite{TaoVuTwo}, inspired by \cite{Lindeberg}. The idea of this technique
is to replace a column vector of $\probM_{n,n+u}$ by a uniformly
random vector. Somewhat surprisingly, this does not significantly
alter the distribution of the cokernel of the random matrix or the
distribution of the other invariants  (\autoref{thm:qualtheorem}).
Replacing all the columns by independent uniformly random vectors allows us to conclude \ref{thm:firsttheorem:estimate}.
\newline
\begin{remark}
In an upcoming note, we will reinterpret the "column-swapping estimate" from a dynamical point of view - the estimate will imply
that cokernels of minors \textit{approximately} form a Markov chain.
We will see that the dynamic perspective naturally allows us to prove finer universality results, using equidistribution theorems for Markov chains.
\end{remark}

\subsubsection{Related work}

The earliest papers treating universality problems for random matrices
over the $p$-adics were \cite{Maples1} , \cite{WoodMoments} and
\cite{WoodNguyen} . In the decade that followed their appearance, there
has been a surge of results on this topic. We again refer to
\cite{WoodICM} for a survey. \marginparr{Mention some results that have
appeared since 2022... CheongYu, AvW...} Nearly all of this work has
been driven by the moment  method, introduced by Wood in the seminal
article on symmetric $p$-adic matrices \cite{WoodMoments}.
\newline
\newline
As noted previously, we instead use the Lindeberg replacement technique. In
the context of random matrices over the real and complex numbers, this
method was introduced by Tao and Vu in \cite{TaoVuTwo}; in the latter article, it is used to
prove universality of local eigenvalue statistics of random matrices over $\mathbb{R}$ and $\mathbb{C}$.
The LRT has also been used to prove universality for the distribution
of the logarithm of the determinant of a random matrix over
$\mathbb{R}$\marginparr{pver $\R$???} in \cite{NguyenVu}.
\newline
\newline
For rings other than $\mathbb{R}$ or $\mathbb{C}$, this
approach seems to have been pursued only in the case of finite fields.
In particular, for symmetric matrices over finite fields, a strategy
equivalent to a replacement strategy was suggested in an unpublished
note of Maples \cite{Maples2}\marginparr{see also KoymansPagano for a similar idea}.

\subsubsection{Outline of the paper}

In \S \ref{sec:estimate}, we state the column-swapping estimate and
derive some of its consequences, such as
\autoref{thm:firsttheorem:estimate}. In \S \ref{sec:proofofestimate:main}, we
reduce the proof of the column-swapping estimate to two
inequalities, proven in \S \ref{sec: random matrix inequality} as \autoref{thm:fundamentalmomentinequality} and in \S \ref{sec: decomposing measures} as \autoref{thm:maininequality}.
\newline
\newline
\S \ref{sec: random matrix inequality} and \S \ref{sec: decomposing measures} contain the auxiliary results that are needed to prove the column-swapping estimate. We remark that these two sections are structured in such a way that each may be read independently of the rest of the paper.

\subsubsection{Acknowledgments} 
  
This project is the culmination of many years of work, and many people have contributed along the way. First of all, the author would like to than his advisor Manjul Bhargava for the initial suggestion that he think about universality for p-adic matrices, and for encouragement and inspiration. The author would like to thank Yakov Sinai, who contributed the original idea that led to this line of attack, and that inspired the whole Markovian approach to random matrices. Moreover, the author would like to thank Prof. Sinai for his constant encouragement, and the opportunity to speak at his seminar. The author would like to thank Xi Sisi Shen for her invaluable support during the difficult initial stages of this project. The author would like to thank Timothy Hurd for his faith and friendship, at a time when the author had nearly given up on mathematics. The author would like to thank his family for their constant love and support, without which it would have been tremendously difficult to continue, let alone complete this work. The author would also like to thank L.V. Chebotarev, Matthew Egan, James Gutteridge and Dima and Mahshid Grudin for their interest, kind encouragement and helpful advice. The author would like to thank Alexander van Werde for useful discussions concerning universality. Finally, Alex Yu provided the lion's share of feedback, advice, constructive criticism, and good humour that the author benefited from during the writing of this paper.

\section{The column-swapping estimate and its consequences}
\label{sec:estimate}
\renewcommand{\vvector}{\mathbf{v_1}}
\newcommand{\vvvector}{\mathbf{v}}
\renewcommand{\uvector}{\mathbf{v_0}}

\subsection{Preliminary definitions}

\begin{definition}
Let $\xi$ be a random variable on $R$ with the same distribution as the entries of $\probM_{n,n+u}$.
\end{definition}

\begin{definition}
Let $\vvector$ be a random vector in $R^n$ whose entries are independent and have the same distribution as $\xi$.
\end{definition}

\begin{definition}
Let $\uvector$ be a uniformly distributed random element of $R^n$.
\end{definition}

\newcommand{\projjj}[2]{
{#2} \,+ {#1}
}

\begin{definition}
Suppose that $B$ is a submodule of the module $R^n$ and that
$\mathbf{v}$ is a random element of $R^n$. Then by
\[
\projjj{B}{\vvvector}
\]
we denote the sum of the random vector $\vvvector$ and an independent uniformly
distributed random
element of $B$.
\end{definition}

\begin{remark}
Intuitively, we get the distribution of $\projjj{B}{\vvvector}$ by "averaging out" the distribution of $\vvvector$
over $B$-cosets.
\end{remark}

\newcommand{\matrixvectortwo}[2]
{
\begin{mat}{c|c}
#1 & #2 \\
\end{mat}
}

\newcommand{\matrixvector}[2]{
\begin{mat}{ccc|c}
&&&\\
&#1&&#2\\
&&&\\
\end{mat}
}

Observe that, by the invariance of the cokernel under the action of
$SL(R)$, the distribution of
\[
\coker
\matrixvector{ \probM_{n,n+u}}{\vvvector}
\]
remains invariant if we add a uniformly random element of
$im(\probM_{n,n+u})$ to $\vvvector$.
\newline
\newline

\newcommand{\imM}{im(\probM_{n,n+u})}

\subsection{Formulation of the column-swapping estimate}

It follows from the preceding paragraph that the total variation distance:
\begin{equation}
\label{eqn:totalvariationdistance}
d_{TV} \left(
coker
\matrixvector{\probM_{n,n+u}}{\vvector}
,
coker
\matrixvector{\probM_{n,n+u}}{\uvector}
\right)
\end{equation}
is equal to the total variation distance between
\[
coker
\matrixvectortwo{\probM_{n,n+u}}{\projjj{im(\probM_{n,n+u})}{\vvector}}
\]
and
\[
coker
\matrixvectortwo{\probM_{n,n+u}}{\projjj{im(\probM_{n,n+u})}{\uvector}}
\]
Therefore, (\ref{eqn:totalvariationdistance}) is bounded by:
\begin{equation}
\label{eqn:projMuv}
\sum_{M}
\probP(\probM_{n,n+u} = M )
d_{TV} \Big(
\projjj{im(M)}{\vvector}
,
\projjj{im(M)}{\uvector}
\Big)
\end{equation}

\begin{remark}
Of course, $\projjj{im(M)}{\uvector}$ has the same
distribution as $\uvector$. Therefore, we can replace it by
$\uvector$, if we wish.
\end{remark}

It therefore suffices to bound (\ref{eqn:projMuv}). The preceding discussion
serves to motivate the following theorem, which is the central result
of this paper: we first state the theorem in qualitative form:

\begin{theorem}[Qualitative form of the column-swapping estimate]
\label{thm:qualtheorem}
Suppose that $\probM_{n,n+u}$ is a random matrix with i.i.d. random entries,
which are sampled from a distribution that is
\begin{itemize}
\item not concentrated on the translate of an ideal of $R$,
\item not concentrated on the translate of a subring of $R$.
\end{itemize}
Then there exists $\theta<1$ such that
\[
\sum_M \probP(\probM_{n,n+u} = M) d_{TV} \Big(
\projjj{im(M)}{\vvector}
,
\projjj{im(M)}{\uvector}
\Big)
\leq
\]
\begin{equation}
\label{eqn:lrtinequality}
\leq O(\theta^n)
\end{equation}
where $\theta$ is a constant that depends on $R$, and on the distribution
of the entries of $\probM_{n,n+u}$.
\newline
\newline
Moreover, the same estimate continues to hold if some of the entries
of $\probM_{n,n+u}$ are replaced by independent uniformly distributed
random variables.
\end{theorem}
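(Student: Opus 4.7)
The plan is to pass to Fourier analysis on the additive group $R^n$. Since $\projjj{\im(M)}{\uvector}$ is uniform on $R^n$, the inner total variation distance measures how far $\projjj{\im(M)}{\vvector}$ is from uniform. Plancherel on the finite abelian group $R^n$ gives
\[
d_{TV}\Big(\projjj{\im(M)}{\vvector}, \projjj{\im(M)}{\uvector}\Big)^2 \;\ll\; |R|^n \sum_{\substack{\chi \neq 1 \\ \chi|_{\im(M)} = 1}} |\hat{\vvector}(\chi)|^2,
\]
and since $\vvector$ has i.i.d.\ coordinates, $|\hat{\vvector}(\chi)|^2 = \prod_{i=1}^n |\hat{\xi}(\chi_i)|^2$. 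Using Jensen's inequality to move the expectation over $M$ inside the square root, the task reduces to bounding
\[
\sum_{\chi \neq 1} \probP_M\!\big(\chi|_{\im(M)} = 1\big) \prod_{i=1}^n |\hat{\xi}(\chi_i)|^2
\]
by $O(\theta^{2n})$ for some $\theta < 1$.

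I would then split the sum over characters according to the ``level'' of each coordinate $\chi_i \in \widehat{R}$. The non-degeneracy hypotheses on $\xi$ enter here: the assumptions that $\xi$ is not concentrated on a translate of an ideal or a subring are precisely what prevent the Fourier weight $|\hat{\xi}(\chi_i)|$ from being equal to, or too close to, $1$ for characters not arising from the natural ideal or subring dualities. This should be the qualitative content of the decomposing-measures inequality \autoref{thm:maininequality}: after an appropriate decomposition of the law of $\xi$, only characters factoring through very specific quotients of $R$ can contribute a Fourier factor near $1$. Each coordinate $i$ where $\chi_i$ is ``generic'' then contributes a factor strictly less than $1$, yielding exponential decay in the number of such coordinates.

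The remaining task is to handle characters $\chi$ whose coordinates lie almost entirely in the few ``dangerous'' types — trivial, or of ideal/subring type — where the Fourier decay alone is insufficient. Here one turns to the random matrix side: the event $\chi|_{\im(M)} = 1$ forces each of the $n+u$ columns of $M$ into the proper subgroup $\ker \chi \subset R^n$, and the fundamental moment inequality (\autoref{thm:fundamentalmomentinequality}) should provide an exponentially small bound on $\probP_M(\chi|_{\im(M)} = 1)$ whenever $\ker \chi$ is suitably ``deep''. This exponential smallness compensates the absence of Fourier decay, since the column distribution itself inherits the non-degeneracy of $\xi$. Summing the contributions of each regime of characters should then deliver the $O(\theta^n)$ bound.

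The main obstacle, I expect, is calibrating the boundary between the two regimes. Characters that are dangerous on most coordinates but generic on a handful are the delicate case: neither the Fourier bound nor the random-matrix bound is sharp on its own, and one must use a joint estimate. The fact that the hypothesis rules out concentration on both ideal translates and subring translates (rather than just one) suggests the partition of characters must be fine enough to detect both kinds of degenerate structure and to exclude conspiracies between $\chi$ and the typical structure of $\im(M)$. The ``moreover'' clause, allowing some entries of $\probM_{n,n+u}$ to be replaced by independent uniforms, should be immediate from the same argument: uniform entries only enhance the non-degeneracy of the column distribution, and the random-matrix inequality is monotone in this sense.
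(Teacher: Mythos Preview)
Your overall architecture --- a Fourier decomposition, a split into ``generic'' versus ``dangerous'' characters, with \autoref{thm:fundamentalmomentinequality} handling the latter --- is indeed the paper's strategy. But two points in your sketch are genuinely off, and they are exactly where the work lies.

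First, you have misread \autoref{thm:maininequality}. It is not a statement about which characters have Fourier weight near $1$; it is a Cauchy--Schwarz bound of the form
\[
\frac{1}{|\omega_I|^{n}}\sum_{\chi\in\mathrm{Sur}(R^n,\omega_I)/R^*}\lone{\nu_\chi}\;\lesssim\;\ltwo{\nu \bmod I},
\]
and in the paper it is used for the \emph{typical} regime, where $\probP(\chi\probM=0)$ is near its expected size $|\omega_I|^{-(n+u)}$. The paper does not go through Plancherel and Jensen on the total variation distance; it decomposes $\nu=\nu_1-\nu_0$ directly into $l^1$-components $\nu_\chi$ indexed by $\chi\in\Hom_R(R^n,\omega)/R^*$ (not by individual abelian characters), bounds $\lone{\projj_{\im(M)}\nu}\le\sum_{\chi:\,\chi M=0}\lone{\nu_\chi}$, and then splits the resulting sum according to whether $\probP(\chi\probM=0)$ exceeds the threshold $((1+\epsilon_0)/|\omega_I|)^{n+u}$. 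The above-threshold part is \autoref{thm:fundamentalmomentinequality}; the below-threshold part is \autoref{thm:maininequality}, together with $\ltwo{\nu_1-\nu_0 \bmod I}\le\ltwo{\xi \bmod \mathfrak m}^n$.

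Second, your claim that the two non-degeneracy hypotheses ``are precisely what prevent $|\hat\xi(\chi_i)|$ from being equal to, or too close to, $1$'' is false. There can be additive subgroups of $R$ that are neither ideals nor subrings (e.g.\ $\{0,\alpha\}\subset\mathbb F_4$), and $\xi$ may well be supported on a coset of one while still satisfying both hypotheses; then $|\hat\xi(\psi)|=1$ for the corresponding additive character $\psi$. The subring hypothesis is \emph{not} used to force Fourier decay. It enters instead through the counting argument of \autoref{lem:notamodule} in \S\ref{sec: random matrix inequality}: if $\pi\subset M$ is an additive subgroup that is not an $R$-module and the support of $\xi$ generates $R$ as a ring, then $\{m:m\cdot\mathrm{supp}(\xi)\subset\pi\}\subsetneq\pi$. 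This is what makes the number of ``Type~2'' tuples $\{m_i\}$ small enough to beat the absent Fourier decay, and it is the step your coordinate-wise ``level'' splitting does not see. Your observation about the ``moreover'' clause is correct.
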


\paragraph{Quantitative form}

Below, we give the quantitative form of \autoref{thm:qualtheorem}. In order to do so, we must first introduce some definitions. Let
$\xi$ be a random variable that has the same \textit{distribution} as the
entries of $\probM_{n,n+u}$.
\newline
\begin{definition}
Define $(\xi \mod \mathfrak{m})$ to be the random variable that is induced by
$\xi$ on $R/\mathfrak{m}$.
\end{definition}

\begin{itemize}
\item Denote by $\ltwo{\xi \mod \mathfrak{m}}$ the $l^2$ norm of the
distribution of $(\xi \mod \mathfrak{m})$.
\item Denote by $\linf{\xi \mod \mathfrak{m}}$ the $l^{\infty}$ norm
of the distribution of $(\xi \mod \mathfrak{m})$.
\item Let $char(R / \mathfrak{m})$ denote the characteristic of the
field $R/\mathfrak{m}$.
\end{itemize}

\newcommand{\maxlnorm}{
\max \Big(
\ltwo{\xi \mod \mathfrak{m}},
\linf{\xi \mod \mathfrak{m}},
\frac{1}{char(R/\mathfrak{m})}
\Big)
}

\begin{remark}
Observe that, under the hypotheses of \autoref{thm:qualtheorem},
\[
\maxlnorm < 1.
\]
\end{remark}

\begin{theorem}[Quantitative form of the column-swapping estimate]
\label{thm:quantheorem}
If $\theta$ satisfies
\begin{equation}
\label{eqn:thetaequation}
\maxlnorm < \theta < 1
\end{equation}
then, (\ref{eqn:lrtinequality}) holds; the proportionality constant
implicit in
(\ref{eqn:lrtinequality}) depends on $u$, $\theta$, $R$ and on the
smallest non-zero value of:
\[
\probP(\xi = r) \hspace{0.1in} , \hspace{0.1in} r \in R
\]
\end{theorem}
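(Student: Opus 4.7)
The plan is to bound each summand on the left of (\ref{eqn:lrtinequality}) by Fourier analysis on the finite abelian group $R^n$ and then average over $M$. For any submodule $B \subseteq R^n$, the distribution of $\projjj{B}{\vvvector}$ has Fourier coefficient $\hat{\vvvector}(\chi)$ when the character $\chi \in \widehat{R^n}$ vanishes on $B$, and zero otherwise; since $\hat{\uvector}(\chi) = \mathbf{1}_{\chi = 0}$, Plancherel combined with the elementary inequality $d_{TV}(P,Q) \leq \tfrac12 \sqrt{|G|}\,\|P-Q\|_2$ gives
\[
d_{TV}\bigl(\projjj{im(M)}{\vvector},\, \projjj{im(M)}{\uvector}\bigr)^{2} \;\leq\; \tfrac14 \sum_{\substack{\chi \neq 0 \\ \chi|_{im(M)} = 1}} |\hat{\vvector}(\chi)|^{2}.
\]
Applying Cauchy--Schwarz to the sum over $M$ reduces the theorem to the estimate $\mathbb{E}_M d_{TV}^{\,2} = O(\theta^{2n})$.

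Next I would exploit the i.i.d.\ structure on both sides. Identifying $\chi \in \widehat{R^n}$ with a tuple $(\chi_1,\ldots,\chi_n)$ of characters of $R$, independence of the entries of $\vvector$ gives $|\hat{\vvector}(\chi)|^{2} = \prod_{i=1}^{n} |\hat{\xi}(\chi_i)|^{2}$. Similarly, since $\chi$ annihilates $im(\probM)$ if and only if $\chi(c_j) = 1$ for every one of the $n+u$ i.i.d.\ columns $c_j$ of $\probM$, independence across columns yields $\probP(\chi|_{im(\probM)} = 1) = p_\chi^{\,n+u}$ where $p_\chi := \probP(\chi(c) = 1)$ for a single column $c$. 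The problem therefore reduces to
\[
\sum_{\chi \neq 0} \Bigl( \prod_{i=1}^{n} |\hat{\xi}(\chi_i)|^{2} \Bigr)\, p_\chi^{\,n+u} \;=\; O(\theta^{2n}).
\]

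To establish this bound I would stratify the sum over $\chi$ according to the "type" of its components. The hypothesis $\maxlnorm < \theta$ is precisely what guarantees $|\hat{\xi}(\chi_i)| \leq \theta$ for "generic" components $\chi_i$, so that for characters $\chi$ all of whose nontrivial components are generic the product factor alone already delivers the desired $\theta^{2n}$ decay. The main obstacle is the contribution of "bad" characters, those whose components $\chi_i$ are trivial on a subring or ideal of $R$ on a translate of which $\xi$ carries substantial mass; for such $\chi_i$ the value $|\hat{\xi}(\chi_i)|$ can be arbitrarily close to $1$, and a naive union bound is fatally lossy since there may be up to $|R|^n$ such $\chi$. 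This is exactly where the two auxiliary results do the work: \autoref{thm:maininequality} decomposes the distribution of $\xi$ (equivalently, stratifies $\widehat{R}$) according to the subring/ideal stratification of $R$, producing the correct combinatorial enumeration of bad characters at each level, while \autoref{thm:fundamentalmomentinequality} supplies the compensating decay of $p_\chi^{\,n+u}$ on each stratum by showing that a structured character is unlikely to annihilate many independent columns of an i.i.d.\ matrix. Gluing the generic and the stratified bad-character contributions together produces the target bound $O(\theta^{2n})$; the implicit constant depends on $u$, $R$, $\theta$ and on the smallest nonzero atom of $\xi$, the latter entering through the measure-decomposition step of \autoref{thm:maininequality}.
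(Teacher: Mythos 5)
Your reduction to a second moment is where the argument breaks. Bounding $d_{TV}$ by $\tfrac12\sqrt{|G|}\,\|P-Q\|_{2}$ and then averaging over $M$ via Cauchy--Schwarz forces you to prove
\[
\sum_{\chi\neq 0}\ \prod_{i=1}^{n}|\hat{\xi}(\chi_i)|^{2}\ \probP\big(\chi|_{im(\probM_{n,n+u})}=1\big)\ =\ O(\theta^{2n})
\]
for \emph{every} $\theta$ admissible in (\ref{eqn:thetaequation}), and that statement is false. Take $R=\mathbb{F}_p$ with $p$ large, $\probP(\xi=0)=\tfrac12$ and $\xi$ uniform on the nonzero elements: the threshold in (\ref{eqn:thetaequation}) is only slightly above $\tfrac12$, so the theorem permits $\theta=0.52$, i.e.\ $\theta^{2n}\approx(0.27)^{n}$; yet a single character supported on one coordinate already contributes $|\hat{\xi}(\chi_1)|^{2}\,\probP(\xi=0)^{n+u}\asymp (1/2)^{n+u}$ to your sum. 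The $L^2$/Plancherel step inevitably costs a square root: at best it yields the estimate for $\theta$ above the \emph{square root} of the correct threshold, which is strictly weaker than \autoref{thm:quantheorem}. The paper never squares: it writes the total variation distance as the $l^{1}$ norm of the projected signed measure $\nu_1-\nu_0$, decomposes that measure into components $\nu_\chi$ indexed by $\chi\in Hom_R(R^{n},\omega)/R^{*}$ (\autoref{lem:measuresonmodules}), and bounds $\sum_\chi \lone{\nu_\chi}\,\probP(\chi\,\probM_{n,n+u}=0)$ directly, splitting the probability into the generic part $\big((1+\epsilon_0)/|\omega_I|\big)^{n+u}$ --- controlled by the $l^{1}$--$l^{2}$ inequality of \autoref{thm:maininequality}, which is where $\ltwo{\xi \mod \mathfrak{m}}^{n}$ enters at first power --- and the large-deviation excess, controlled by \autoref{thm:fundamentalmomentinequality}, which produces the factor $\maxbound^{n}$, again at first power.

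Two further points. First, over a general finite local ring the kernel of a group character is not an $R$-submodule, so your equivalence ``$\chi$ annihilates $im(\probM_{n,n+u})$ iff $\chi(c_j)=1$ for every column'' fails (e.g.\ $R=\mathbb{F}_4$, $\chi$ with kernel $\{0,1\}$, column $c=1$): the correct event is that every column lies in the largest submodule contained in $\ker\chi$, and this is exactly why the paper indexes its decomposition by module homomorphisms into the dualizing module $\omega$ rather than by $\widehat{R^{n}}$. Second, the roles you assign to the auxiliary results are not what they prove: \autoref{thm:fundamentalmomentinequality} does not supply decay of $p_\chi^{\,n+u}$ but bounds the excess $\max\big[\probP(f(\probM_{n,n+u})=0)-((1+\epsilon_0)/|M|)^{n+u},0\big]$ summed over surjections onto a fixed Fourier module, and \autoref{thm:maininequality} is not a combinatorial enumeration of bad characters but a Cauchy--Schwarz comparison of $\sum_\chi\lone{\nu_\chi}$ (over $\chi$ with image $\omega_I$) with $\ltwo{\nu \mod I}$ --- a quantity that does not even appear in your second-moment sum, since you replaced the $\lone{\nu_\chi}$ by squared Fourier coefficients. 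So the final ``gluing'' paragraph cannot be carried out as described, independently of the square-root loss.
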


\paragraph{Intuitive meaning of \autoref{thm:qualtheorem}}

\autoref{thm:qualtheorem} captures the following intuitively plausible fact. An i.i.d. random vector should be approximately equidistributed in the quotient of $R^n$ by $n+u$ other i.i.d. random vectors, with high probability.

\subsubsection{Corollaries of \autoref{thm:quantheorem}}
\label{sec:proofofestimate}
\paragraph{Deduction of \autoref{thm:firsttheorem:estimate} }

\autoref{thm:firsttheorem:estimate} is an immediate
consequence of \autoref{thm:quantheorem}.
\begin{proof} Indeed, by the discussion preceding
\autoref{thm:qualtheorem}, the
inequality (\ref{eqn:lrtinequality}) implies that:
\[
d_{TV} \left( coker
\begin{mat}{ccc|c}
&&& \\
&\probM_{n,n+u}&& \vvector \\
&&&
\end{mat}
,
coker
\begin{mat}{ccc|c}
&&& \\
&\probM_{n,n+u}&& \uvector \\
&&&
\end{mat}
  \right) \leq
\]
\begin{equation}
\label{eqn:cokerneltv}
\leq
O(\theta^n)
\end{equation}

\begin{remark}
Recall that the inequality in (\ref{eqn:lrtinequality}) remains valid if we replace some of the entries of $\probM_{n,n+u}$ by independent,
uniformly random variables. Hence, (\ref{eqn:cokerneltv}) also remains
valid.
\end{remark}

By the preceding remark, we can apply the inequality (\ref{eqn:cokerneltv}) iteratively
to conclude that
\begin{equation}
\label{eqn:cokerneluniversality}
\lim_{n \rightarrow \infty}
d_{TV} \Big(
coker
(\probM_{n,n+u+1}),
coker
(\probU_{n,n+u+1})
\Big)=0
\end{equation}
for any $u \in \Z$.
\end{proof}

\paragraph{Finer invariants}

To conclude (\ref{eqn:cokerneltv}), all we have used about the
cokernel function is its invariance under the right action of $SL_{n+u}$.
Therefore, (\ref{eqn:cokerneltv}) remains valid if we replace the
cokernel by any other $SL_{n+u}$ invariant, such as the span of the column vectors of $\probM_{n,n+u}$. In particular,
\begin{itemize}
\item We have
\[
\lim_{n \rightarrow \infty}
d_{TV} \Big(
span(\probM_{n,n+u})
,
span(\probU_{n,n+u})
\Big) =0
\]
\item The total variation distance between the joint distribution of
\[
span(\probM_{n,n}) \, , \, det(\probM_{n,n})
\]
and the joint distribution of
\[
span(\probU_{n,n}) \, , \, det(\probM_{n,n})
\]
tends to $0$.
\end{itemize}

\paragraph{Rate of convergence}

We observe that the rate of convergence is 
\[
(n+u)O(\theta^{n})
\]
which may be rewritten as 
\[
O(\theta^n)
\]
for a slightly larger $\theta$ in the range (\ref{eqn:thetaequation}). Once again, the implicit constant depends only on $u$, $R$, $\theta$ and the smallest non-zero value of 
\[
\probP(\xi = r) \hspace{0.2in} r \in R
\]

This proves the rest of \autoref{thm:firsttheorem:estimate}

\paragraph{Upper-Triangular Matrices}

We will finally observe a consequence of \autoref{thm:quantheorem} that
will be useful in an upcoming note, where \autoref{thm:quantheorem} will be combined with a Markovian perspective to deduce more universality results.
\newline
\newline

\begin{definition}
Define $T_n$ to be the group of upper triangular matrices with $1$'s
on the diagonal and let $t$ be the map to the double quotient
\[
Mat_{k,l} \rightarrow T_k \backslash Mat_{k,l} / T_l
\]
\end{definition}

\begin{corollary} (of \autoref{thm:quantheorem})
\[
d_{TV}\left( t \matrixvector{\probM_{n,n+u}}{\vvector} ,
t \matrixvector{\probU_{n,n+u}}{\uvector}  \right) \leq O(\theta^n)
\]
\end{corollary}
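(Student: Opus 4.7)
The plan is to follow the template of the deduction of \autoref{thm:firsttheorem:estimate}(A) at the start of \S \ref{sec:proofofestimate}, substituting the right $SL_{n+u+1}$-invariance of the cokernel with the (weaker) right $T_{n+u+1}$-invariance of the double-coset map $t$. I would first note that, for each $k \in \{0,1,\dots,n+u\}$, right-multiplication by a unipotent upper-triangular matrix whose non-zero off-diagonal entries are confined to the $(k+1)$-st column shows $t$ to be invariant under the operation of adding to column $k+1$ any linear combination of columns $1,\dots,k$. Averaging over a uniformly random such combination replaces the $(k+1)$-st column $v$ by $\projjj{im(A)}{v}$, where $A$ denotes the $n\times k$ matrix formed by the first $k$ columns; this is exactly the construction $\projjj{B}{\cdot}$ introduced in \S \ref{sec:estimate}.

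With this invariance at hand, I would run a left-to-right hybrid argument. Define $M^{(k)}$ to be the $n\times(n+u+1)$ random matrix whose first $k$ columns are independent uniformly distributed elements of $R^n$ and whose remaining $n+u+1-k$ columns are independent copies of $\vvector$; thus $M^{(0)}=\matrixvectortwo{\probM_{n,n+u}}{\vvector}$ and $M^{(n+u+1)}=\matrixvectortwo{\probU_{n,n+u}}{\uvector}$. For each $k$, the data-processing inequality together with the invariance above (and the fact that the block of columns $k+2,\dots,n+u+1$ is independent of everything else and identically distributed on both sides) gives
\[
d_{TV}\!\left(t(M^{(k)}),\,t(M^{(k+1)})\right) \;\le\; d_{TV}\!\left(\matrixvectortwo{A}{\projjj{im(A)}{\vvector}},\,\matrixvectortwo{A}{\projjj{im(A)}{\uvector}}\right),
\]
where $A$ denotes the first $k$ columns. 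The right-hand side is precisely the quantity controlled by the column-swapping estimate \autoref{thm:quantheorem}, applied to the $n\times(k+1)$ matrix $\matrixvectortwo{A}{v}$ with the theorem's $u$-parameter specialised to $k-n$ (permitted to be negative); the ``Moreover'' clause of \autoref{thm:qualtheorem} allows all entries of $A$ to be uniform. Each term is therefore $O(\theta^n)$.

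Summing over $k=0,\dots,n+u$ yields a bound $(n+u+1)\cdot O(\theta^n)$, which I would absorb into $O(\theta^n)$ by slightly enlarging $\theta$ within the range (\ref{eqn:thetaequation}), exactly as in the ``Rate of convergence'' paragraph. The main conceptual departure from the cokernel argument is that, since $t$ is not invariant under arbitrary column permutations, the hybrid must proceed strictly left-to-right and we may only mod out by the span of the columns \emph{strictly to the left} of the one being swapped. The only delicate point to verify is then the flexibility of the column-swapping estimate in applying to a prefix of the matrix, which it provides because \autoref{thm:quantheorem} is valid for any integer $u$; the left $T_n$-invariance of $t$ plays no active role here, since data-processing from the unquotiented matrix is already sufficient.
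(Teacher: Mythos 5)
Your left-to-right hybrid does not work, and the first place it breaks is the per-step bound. \autoref{thm:quantheorem} is an asymptotic statement in $n$ with the number of columns $n+u$ comparable to $n$ and $u$ \emph{fixed}; its constants depend on $u$, and the estimate is simply false in the regime you need, where the prefix $A$ has $k\ll n$ columns (your ``$u$-parameter $=k-n$''). Indeed, for $k=0$ your step bound is $d_{TV}\big(\vvector,\uvector\big)$, which is bounded below by a positive constant whenever $\xi$ is not uniform; more generally an i.i.d.\ vector cannot be equidistributed in $R^n$ modulo the span of only $k\ll n$ columns, so $d_{TV}\big(\projjj{im(A)}{\vvector}\,,\,\projjj{im(A)}{\uvector}\big)$ is of order $1$ for the early steps. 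And this is not an artifact of the bound: since right multiplication by $T_{n+u+1}$ leaves the first column untouched, $t(M^{(0)})$ and $t(M^{(1)})$ already differ by at least $d_{TV}(\xi,\mathrm{uniform})$ in total variation. So the early terms of the telescoping sum contribute order $1$, not $O(\theta^n)$.

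The deeper problem is that the statement you set out to prove --- full replacement of $\probM_{n,n+u}$ by $\probU_{n,n+u}$ inside $t$ --- is false for non-uniform $\xi$, so no hybrid can establish it. The bottom-left entry of a matrix is invariant under both unipotent actions (left multiplication by $T_n$ fixes the last row, right multiplication by $T_{n+u+1}$ fixes the first column), hence $t$ determines it, and the displayed distance is at least $d_{TV}(\xi,\mathrm{uniform})$. The corollary is to be read as the single-column statement
\[
d_{TV}\Big(t\,[\,\probM_{n,n+u}\mid\vvector\,]\;,\;t\,[\,\probM_{n,n+u}\mid\uvector\,]\Big)\le O(\theta^n),
\]
with the block $\probM_{n,n+u}$ common to both arguments (the $\probU_{n,n+u}$ in the display is an error); this is how it is applied later, where only the appended uniform column and row are compared against i.i.d.\ ones over a shared top-left block. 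In that form the proof is the one-step argument you describe at the outset, exactly parallel to the deduction of (\ref{eqn:cokerneltv}): the only property of the cokernel used there is invariance under the right action, and $t$ is invariant under the right $T_{n+u+1}$-action, which already permits adding to the \emph{last} column arbitrary combinations of the earlier columns; one application of \autoref{thm:quantheorem} then gives the bound, with no iteration over columns, no $u$ depending on $n$, and no need to enlarge $\theta$.
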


\marginparr{The intuitively plausible fact that an i.i.d. random vector
should be approximately equidistributed in the quotient of $R^n$ by
the other column vectors
of the matrix. Though intuitively plausible, the proof of this
statement is the main
technical ingredient of this paper...}


\subsection{Proof of the column-swapping estimate, using the results of \S \ref{sec: random matrix inequality} and \S \ref{sec: decomposing measures}}
\label{sec:proofofestimate:main}

The purpose of this subsection is to reduce the column-swapping estimate to two inequalities, proven in \S \ref{sec: random matrix inequality} and \S \ref{sec: decomposing measures}, respectively.
\newline
\newline
First, given a signed measure $\nu$ on a finite module $\Mmodule$, and a submodule $N \subset \Mmodule$, we denote by $\projj_N \nu $ the average of $\nu$ over $N$-cosets of $\Mmodule$.
\newline
\newline
We wish to bound

\begin{equation}
\label{eqn:tvineq}
\sum_{M}\probP(\probM_{n,n+u}=M) d_{TV} \Big(
\projjj{im(M)}{\vvector}
,
\projjj{im(M)}{\uvector}
\Big)
\end{equation}

\paragraph{}Let $\nu_i$ denote the distribution of $\mathbf{v}_i$. Note that
\[
d_{TV} \Big(
\projjj{im(M)}{\vvector}
,
\projjj{im(M)}{\uvector}
\Big)
\]
can be rewritten as
\[
\lone{\projj_{im(M)} (\nu_1 - \nu_0) }
\]

Hence, to prove \autoref{thm:quantheorem}, it suffices to bound:
\[
\sum_M \probP(\probM_{n,n+u} = M) \lone{\projj_{im(M)} (\nu_1 - \nu_0)}
\]
We will now see that \autoref{thm:quantheorem} can be deduced from the following estimate:

\begin{lemma}
\label{lem:centralinequality}
For any arbitrary signed measure $\nu$ on $R^n$, we have
the inequality:
\[
\sum_{M}
\probP
\left(
\probM_{\rows,\cols} = M
\right)
\lone{
\projj_{im(M)}
\nu 
}
\leq
\]
\begin{equation}
\label{eqn:centralinequality}
\leq
\sum_{I \subset R}
O
\left( (1+\epsilon)^{\rows} 
\ltwo{
\nu  \mod I
}
\right)
     +
\end{equation}
\[
O
\left(
(1+\epsilon)^{\rows}
\maxbound^{\rows} 
\right)
\]
\marginparr{There is an unclear comment in your notes.}
where the implied constants depend on $R$, $u$, $\epsilon$, and the
minimal non-zero value of  $
\probP(\xi=r) \text{, for } r \in R$.
\end{lemma}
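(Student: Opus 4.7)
The plan is to decouple the random matrix from the measure $\nu$ via Fourier analysis on $R^n$, and then feed the two resulting pieces into the random matrix estimate of \S \ref{sec: random matrix inequality} and the measure decomposition of \S \ref{sec: decomposing measures}. Fix a faithful additive character $\psi$ of $R$ so that $\widehat{R^n}$ is parametrized by $a \in R^n$ via $\chi_a(x) = \psi(\sum_i a_i x_i)$; write $a \perp B$ when $\chi_a$ is trivial on the submodule $B \subset R^n$.

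\textbf{Step 1 (Pass to a character sum).} Since $\projj_{im(M)}\nu$ is constant on $im(M)$-cosets, Cauchy--Schwarz gives $\lone{\projj_{im(M)}\nu} \le |R|^{n/2} \ltwo{\projj_{im(M)}\nu}$. Plancherel on $R^n$ then yields $\ltwo{\projj_{im(M)}\nu}^2 = |R|^{-n} \sum_{a \perp im(M)} |\widehat\nu(a)|^2$. Averaging over $M$ and applying Jensen to bring the expectation inside the square root, the left-hand side of the lemma is bounded by $\left(\sum_{a \in R^n} |\widehat\nu(a)|^2 \, \probP(a \perp im(\probM_{n,n+u}))\right)^{1/2}$. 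Because the columns of $\probM_{n,n+u}$ are i.i.d., this probability factors as $\probP(\sum_i a_i \xi_i = 0)^{n+u}$, which depends only on the ideal-theoretic type of $a$.

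\textbf{Step 2 (Partition by ideal).} Partition the sum over $a$ by the ideal $I(a) := (a_1, \ldots, a_n) \subset R$. The characters $\chi_a$ with $I(a) \subset I$ all factor through the quotient $(R / \mathrm{Ann}(I))^n$, so Plancherel on this quotient gives $\sum_{a \in \mathrm{Ann}(\mathrm{Ann}(I))^n} |\widehat\nu(a)|^2 = |R/\mathrm{Ann}(I)|^n \, \ltwo{\nu \bmod \mathrm{Ann}(I)}^2$. Summing over ideals $I \subset R$ and reindexing $I \mapsto \mathrm{Ann}(I)$ is what produces the ideal-indexed terms in the conclusion of the lemma, and I expect this to be the role of \autoref{thm:maininequality}. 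Concurrently, \autoref{thm:fundamentalmomentinequality} should supply the required uniform anti-concentration bound on $\probP(\sum_i a_i \xi_i = 0)$ for $a$ of each ideal type: when $I(a) = R$ (some $a_i$ a unit), a bound by $\maxbound$; when $I(a) \subsetneq R$, a sharper bound reflecting $|R / I(a)|$ and the non-degeneracy of $\xi$.

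\textbf{Step 3 (Recombine).} Distribute the square root over the ideal partition via $\sqrt{\sum_I X_I} \le \sum_I \sqrt{X_I}$. For each $I \subsetneq R$ the contribution has the form $|R/\mathrm{Ann}(I)|^{n/2} \cdot (\text{probability bound})^{(n+u)/2} \cdot \ltwo{\nu \bmod \mathrm{Ann}(I)}$, yielding the main term $\sum_I O((1+\epsilon)^n \ltwo{\nu \bmod I})$ after reindexing. The exceptional case $I = R$ contributes no Plancherel-side control on $\nu$ (since $\ltwo{\nu \bmod R} = 0$), leaving only the probability factor and producing the additive error $O((1+\epsilon)^n \maxbound^n)$. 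The main technical difficulty, and the heart of \autoref{thm:fundamentalmomentinequality}, is the matching of exponents: one needs $\probP(\sum_i a_i \xi_i = 0)^{(n+u)/2} \cdot |R / \mathrm{Ann}(I)|^{n/2} \le (1+\epsilon)^n$ uniformly in $a$ with $I(a) = I$, a delicate consequence of the non-degeneracy of $\xi$. Ensuring that the Step 2 duality argument works uniformly over all ideals of a not-necessarily-Gorenstein finite local ring $R$ is a further technical wrinkle.
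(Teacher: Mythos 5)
Your overall architecture --- partition the dual side by the ideal attached to a character, extract a main term $\ltwo{\nu \bmod I}$ by Plancherel, and push the degenerate characters into an additive error --- parallels the paper's proof in outline, but the step you defer to \autoref{thm:fundamentalmomentinequality} is exactly where the argument breaks, and that theorem does not supply what you ask of it. Your Step 3 needs the pointwise bound $\probP(\sum_i a_i\xi_i = 0)^{(n+u)/2}\,|R/\mathrm{Ann}(I)|^{n/2} \le (1+\epsilon)^n$ uniformly over all $a$ with $I(a)=I$. This is false: take $I=R$ and $a=(1,0,\dots,0)$, so that $\probP(\sum_i a_i\xi_i=0)=\probP(\xi=0)$ is a constant; the left side is then of order $\probP(\xi=0)^{(n+u)/2}|R|^{n/2}$, which grows exponentially as soon as $\probP(\xi=0)>1/|R|$ (e.g.\ $\xi$ uniform on $\{0,1\}\subset\Z/4\Z$, which satisfies the paper's hypotheses). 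Similar degenerate $a$, with few nonzero coordinates or with coordinates concentrated on translates of ideals or subrings, exist in every ideal class, so no uniform-in-$a$ anti-concentration bound of the required strength can hold. \autoref{thm:fundamentalmomentinequality} is not such a bound: it controls the \emph{sum over surjections} of the excess $\max\big[\probP(f(\probM_{n,n+u})=0)-((1+\epsilon_0)/|M|)^{n+u},\,0\big]$ above the near-uniform baseline. The paper's proof is arranged precisely so that only this aggregated quantity is needed: it decomposes $\nu=\sum_\chi\nu_\chi$ into orthogonal components indexed by $Hom(R^n,\omega)/R^{*}$, uses the $n$-independent bound $\lone{\nu_\chi}\le\sqrt{|R/I|}$ of \autoref{lem:measuresonmodules} on each component, splits the \emph{probability} (not the character set) into baseline plus excess, and lets the $\ell^2$ information about $\nu$ enter only through the baseline term via \autoref{thm:maininequality}. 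In your scheme the Cauchy--Schwarz/Jensen step is spent before the probability appears, so the exceptional characters must be handled pointwise with only $\ell^2$ weights available, and they cannot be; repairing this forces you to re-split the character sum into typical and atypical classes with separate $\ell^1$-type control, which is essentially re-deriving the paper's decomposition.

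A second, structural problem is the Step 1 parametrization of $\widehat{R^n}$ by $a\in R^n$ through a single ``faithful'' character $\psi$: an additive character whose kernel contains no nonzero ideal exists only when $R$ is Gorenstein. For a general finite local ring the Pontryagin dual of $R^n$ is $\omega^n$, with $\omega$ the dualizing module, and the biduality $\mathrm{Ann}(\mathrm{Ann}(I))=I$ implicitly used in your Step 2 also fails without Gorensteinness. This is not a wrinkle to be absorbed later; it is the reason the paper replaces classical characters by the $Hom(\cdot\,,\omega)/R^{*}$ decomposition of \S \ref{sec: decomposing measures}. That part of your argument is fixable by working with $\omega$-valued homomorphisms, but the gap in Step 3 remains.
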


\paragraph{The deduction of \autoref{thm:quantheorem} from \autoref{lem:centralinequality}}
In order to deduce \autoref{thm:quantheorem}, we will apply the bound in \autoref{lem:centralinequality} to the case when
\[
\nu = \nu_1 - \nu_0
\]
First of all, note that
\begin{itemize}
\item The total mass of the measure $\nu_1 - \nu_0$ is $0$:
\begin{equation}
\label{eqn:chizero} 
\nu_1 - \nu_0 \mod R = 0
\end{equation}
\item Hence, $\nu_1 - \nu_0$ is orthogonal to $\nu_0$.
\end{itemize}
It follows that 
\[
\ltwo{\nu \mod I} = \ltwo{\nu_1 - \nu_0 \mod I} \leq \ltwo{\nu_1 \mod I}
\]
due to orthogonality. Since the $l^2$ norm of a probability measure cannot decrease under pushforward, the last line is bounded by:
\[
\ltwo{\nu_1 \mod \mathfrak{m}}=
\ltwo{\xi \mod \mathfrak{m}}^n
\]
The last equality holds because $\nu_1$ is a product measure, and therefore, \[\nu_1 \mod \mathfrak{m}\] is a product measure. The norm of a product measure is the product of the norms of the factors.
\newline
\newline
The remainder of this paper is devoted to the proof of \autoref{lem:centralinequality}.

\marginparr{Wasserstein}

\subsubsection{Proof of \autoref{lem:centralinequality}, using two inequalities from  \S \ref{sec: random matrix inequality} and \S \ref{sec: decomposing measures}}
\label{sec:conditional proof of central inequality}

\newcommand{\Homsubscript}
{
\chi \in Hom(\Mmodule, \omega) / R^{*}
}
\newcommand{\Sursubscript}
{
\chi \in Sur(\Mmodule, \omega) / R^{*}
}

\paragraph{Decomposing measures on modules}
We briefly discuss the decomposition of measures on finite $R$-modules, described
in more detail in \S \ref{sec: decomposing measures}. Recall that, given a signed measure $\nu$ on a
finite module
$\Mmodule$, and a submodule $N \subset \Mmodule$, we denote by $
\projj_N \nu $ the average of $\nu$ over $N$-cosets of $\Mmodule$.
Finally, denote by $\omega$ the dualizing module of $R$.

\begin{lemma}
\label{lem:measuresonmodules}
Given a finite $R$-module $\Mmodule$,
\begin{itemize}
\item Any signed measure $\nu$ on $\Mmodule$
admits a decomposition into orthogonal components parametrized by
$\Homsubscript$:
\[
\nu =  \sum_{\chi} \nu_{\chi}
\]
where
\begin{enumerate}
\item The signed measures $\nu_{\chi}$ are constant on $ker(\chi)$-cosets.
\item $\projj_N \nu_{\chi} =0$ over any N that is not contained in
$ker(\chi)$.
\end{enumerate}
\item We have the upper bound:
\begin{equation}
\label{eqn:uniformestimate}
\lone{\nu_\chi} \leq \sqrt{|\im \chi|}
\end{equation}
\end{itemize}
\end{lemma}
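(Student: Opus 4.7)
The plan is to perform Fourier analysis on the finite abelian group underlying $\Mmodule$, using the dualizing module $\omega$ to parametrize characters. I would fix a non-degenerate character $\psi\colon \omega \to \mathbb{C}^{\times}$; the defining property of $\omega$ is that $\chi \mapsto \psi\circ \chi$ identifies $\Hom(\Mmodule,\omega)$ with the Pontryagin dual $\widehat{\Mmodule}$. Thus any signed measure $\nu$ admits the Fourier expansion
\[
\nu \;=\; \frac{1}{|\Mmodule|}\sum_{\chi \in \Hom(\Mmodule,\omega)} \hat{\nu}(\chi)\,\overline{\psi\circ\chi}.
\]
The multiplicative action of $R^{*}$ on $\Hom(\Mmodule,\omega)$ preserves kernels, since $\ker(r\chi)=\ker\chi$ for $r\in R^{*}$, so each orbit $[\chi]\in\Hom(\Mmodule,\omega)/R^{*}$ has a well-defined kernel.

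For each such orbit I would define $\nu_\chi$ to be the partial Fourier sum
\[
\nu_\chi \;:=\; \frac{1}{|\Mmodule|}\sum_{\chi'\in R^{*}\cdot\chi}\hat{\nu}(\chi')\,\overline{\psi\circ\chi'}.
\]
Then $\nu=\sum_{[\chi]}\nu_\chi$ and orthogonality is immediate from Parseval, since distinct orbits involve disjoint Fourier modes. Property (1) holds because every character in an orbit factors through $\Mmodule/\ker\chi$. Property (2) follows from the standard fact that averaging over a submodule $N$ projects on the Fourier side onto characters vanishing on $N$: if $N\not\subset\ker\chi$, no character in $R^{*}\cdot\chi$ vanishes on $N$, and therefore $\projj_N\nu_\chi=0$.

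For the bound $\lone{\nu_\chi}\leq\sqrt{|\im\chi|}$, I would pass through $L^2$. Since $\nu_\chi$ is constant on $\ker\chi$-cosets, it descends to a function on $\Mmodule/\ker\chi$, a set of size $|\im\chi|$, so Cauchy--Schwarz on this quotient gives $\lone{\nu_\chi}\le\sqrt{|\Mmodule|}\,\ltwo{\nu_\chi}$. Plancherel yields $\ltwo{\nu_\chi}^2 = |\Mmodule|^{-1}\sum_{\chi'\in R^{*}\cdot\chi}|\hat{\nu}(\chi')|^2$. Each Fourier coefficient is at most $\lone{\nu}\leq 1$ (treating $\nu$ as a probability measure), and the orbit has size at most $|\im\chi|$ because the natural map $R^{*}\to \text{Aut}_R(\im\chi)$ factors through $(R/\text{Ann}(\im\chi))^{*}$, whose cardinality is bounded by $|\im\chi|$. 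Multiplying these bounds together gives the claim.

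The main obstacle I foresee is justifying the clean indexing by $R^{*}$-orbits on $\Hom(\Mmodule,\omega)$: one really wants that the orbits correspond bijectively to possible kernels, which requires the structural fact that $\omega$ is the injective hull of $R/\mathfrak{m}$ for finite local $R$, so that any two homomorphisms with the same kernel differ by multiplication by a unit. Once that identification is in place, all three assertions reduce to routine Fourier analysis on a finite abelian group.
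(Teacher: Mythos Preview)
Your argument is correct and complete. It differs from the paper's route in an instructive way.

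The paper builds the decomposition \emph{intrinsically}, without ever choosing a character $\psi$ on $\omega$. It defines, for each submodule $N\subset\Mmodule$, the space $V(\Mmodule,N)\subset\PP(\Mmodule)$ of measures constant on $N$-cosets and lying in the kernel of $\projj_{N'}$ for every $N'\supsetneq N$; orthogonality and the spanning property are proved by an induction on $|\Mmodule|$. Only afterwards does the paper identify which $V(\Mmodule,N)$ are nonzero: it shows (via a dimension count over $k$ and then Nakayama plus duality) that $V(\Mmodule,N)\neq 0$ exactly when $\Mmodule/N$ embeds in $\omega$, and that two surjections $\Mmodule\twoheadrightarrow\omega_I$ have the same kernel iff they differ by a unit. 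The $L^1$ bound is obtained by observing that $\nu_\chi\bmod\ker\chi$ is orthogonal to $(\nu-\nu_\chi)\bmod\ker\chi$, so its $L^2$ norm on the quotient is at most that of the probability measure $\nu\bmod\ker\chi$, hence at most $1$; Cauchy--Schwarz on the $|\im\chi|$ cosets finishes.

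Your approach instead fixes $\psi$ up front, identifies $\Hom(\Mmodule,\omega)$ with $\widehat{\Mmodule}$, and groups Fourier modes by $R^*$-orbit. This is more concrete and makes properties (1), (2) and orthogonality immediate from classical Fourier analysis, at the cost of the auxiliary choice of $\psi$ and of needing the structural fact you flag (two $\chi$'s with the same kernel lie in one $R^*$-orbit) to match the indexing set with the paper's. Your $L^1$ bound via Plancherel and the orbit-size estimate $|R^*\cdot\chi|=|(R/I)^*|\le|R/I|=|\im\chi|$ is equivalent to the paper's orthogonality argument; both ultimately reduce to $\|\nu_\chi\|_{l^2}^2\le|\im\chi|/|\Mmodule|$ for $\nu$ a probability measure. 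Either route is fine; the paper's has the aesthetic advantage of being coordinate-free, while yours is shorter and closer to what most readers would recognize.
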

\begin{remark} It follows from 1. and 2. and the definition of $\projj_N \nu $ that:
\[
\projj_N \nu = \sum_{
N \subset ker(\chi)
}
\nu_{\chi}
\]
\end{remark}

\marginparr{Perhaps give the equality number.}

\paragraph{A Reduction}
Now we will apply the preceding decomposition to the signed measure \[ \nu
\defeq \nu_1 -
\nu_0 \] on $R^n$.

\begin{sublemma}
\label{lem:firstreduction}

\begin{equation}
\label{eqn:tvnorm}
\sum_{M} \probP(M=\probMM)
\lone{ \projj_{im(M)}(
\nu_1
-
\nu_0
)
}
\end{equation}
is bounded above by
\begin{equation}
\label{eqn:firstreduction}
\sum_{
\substack{
\chi \in Hom(R^{\rows},\omega)/R^{*}
\\
\chi \neq 0
}
}
\lone{ \nu_{\chi} }
\probP(\chi \, \probMM  = 0)
\end{equation}

\end{sublemma}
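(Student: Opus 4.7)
The plan is to apply the orthogonal decomposition of \autoref{lem:measuresonmodules} to the signed measure $\nu = \nu_1 - \nu_0$ on $R^n$, obtaining $\nu = \sum_{\chi} \nu_{\chi}$ where $\chi$ ranges over $\Hom(R^n,\omega)/R^{*}$. The remark following \autoref{lem:measuresonmodules} then identifies the $\projj_N$-projection as a sub-sum of this decomposition; applied with $N = im(M)$ it yields
\[
\projj_{im(M)} \nu \;=\; \sum_{im(M) \subset ker(\chi)} \nu_{\chi}.
\]

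Next I would observe that $im(M) \subset ker(\chi)$ is exactly the condition $\chi M = 0$ (as a composition of homomorphisms), and that this condition depends only on the $R^{*}$-orbit of $\chi$, so it makes sense on the indexing set. The component indexed by $\chi = 0$ in the decomposition is the constant measure whose value is the average of $\nu$ over $R^n$; since $\nu_1$ and $\nu_0$ are both probability measures, $\nu$ has total mass $0$ and so this component vanishes. Hence
\[
\projj_{im(M)} \nu \;=\; \sum_{\substack{\chi \neq 0 \\ \chi M = 0}} \nu_{\chi},
\]
and the triangle inequality for the $l^1$ norm gives
\[
\lone{\projj_{im(M)} \nu} \;\leq\; \sum_{\substack{\chi \neq 0 \\ \chi M = 0}} \lone{\nu_{\chi}}.
\]

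The final step would be to multiply by $\probP(\probMM = M)$ and interchange the two summations. After the interchange, for each $\chi$ the coefficient of $\lone{\nu_{\chi}}$ becomes $\sum_{M:\,\chi M = 0} \probP(\probMM = M) = \probP(\chi\,\probMM = 0)$, giving exactly (\ref{eqn:firstreduction}).

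I do not expect a serious obstacle here: the argument is essentially formal manipulation on top of \autoref{lem:measuresonmodules}. The only points requiring care are (i) verifying well-definedness on the quotient by $R^{*}$ (both $\lone{\nu_{\chi}}$ and the event $\{\chi \probMM = 0\}$ are $R^{*}$-invariant, so the sum over representatives $\chi \in \Hom(R^n,\omega)/R^{*}$ is unambiguous), and (ii) confirming that the $\chi = 0$ term may be dropped, which is where the hypothesis that $\nu$ is a difference of probability measures is used. All of the analytic work — controlling the sizes of $\lone{\nu_{\chi}}$ and $\probP(\chi\,\probMM = 0)$ — is deferred to the subsequent sections where \autoref{lem:centralinequality} is finished off.
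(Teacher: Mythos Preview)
Your proposal is correct and follows essentially the same route as the paper: apply the decomposition of \autoref{lem:measuresonmodules}, rewrite the condition $im(M)\subset\ker(\chi)$ as $\chi M=0$, use the triangle inequality, and exchange the order of summation. The one point you make more explicit than the paper does is why the $\chi=0$ term may be dropped (because $\nu_1-\nu_0$ has total mass zero, so $\nu_0$ in the decomposition vanishes); the paper's proof of this sublemma leaves that step implicit even though the stated bound is over $\chi\neq 0$.
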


\begin{proof}

By (\ref{lem:measuresonmodules})
\begin{equation}
\label{eqn:nuchidecomposition}
\lone
{
\projj_{im(M)} \nu
}
=
\lone{
\sum_{
\substack{
\chi \in Hom(R^n, \omega) /R^{*}
\\
im(M) \subset ker(\chi)
}
}
\nu_{\chi}
}
\end{equation}

Observe that
\[
im(M) \in ker(\chi) \Leftrightarrow
\chi \, M = 0
\]

\newcommand{\chisubscript}{
\substack{
\chi \in Hom(R^{\rows}, \omega)/R^{*}
\\
\chi \, M = 0
}
}

Thus, we can rewrite (\ref{eqn:nuchidecomposition}) as
\[
\lone{
\sum_{
\chisubscript
}
\nu_{\chi}
}
\leq
\sum_{\chisubscript}
\lone{
\nu_{\chi}
}
=
\]
\[
\sum_{
\chi \in Hom(R^{\rows}, \omega)/R^{*}
}
\lone{\nu_\chi}
\One_{
\{
\chi \, M = 0
\}
}
\]

Therefore, (\ref{eqn:centralinequality}) is bounded above by:
\[
\sum_M
\probP \Big( \probM_{\rows,\cols} = M \Big) \sum_{
\chi \in Hom(R^{\rows},\omega)/R^{*}
}
\lone{
\nu_\chi
}
\One_{
\{
\chi \, M = 0
\}
}
\]
\[
=
\sum_{
\chi \in Hom(R^{\cols}, \omega)/R^{*}
}
\lone{\nu_\chi}
\probP( \chi \, \probMM =0)
\]

\newcommand{\notzerosubscript}{
\substack{
\chi \in Hom( R^{\rows}, \omega ) /R^{*}
}
}
\end{proof}

\paragraph{ Proof of \autoref{lem:centralinequality} }
We recall that $\omega$ is the dualizing module. Hence, the correspondance between
submodules of $\omega$ and their annihilating ideals is $1$-to-$1$. Denote by $\omega_I$ the submodule corresponding to $I$.

Hence, we can rewrite (\ref{eqn:firstreduction}) as
\[
\sum_{
I \subset  R
}
\hspace{0.05in}
\sum_{\chi \in Sur(R^{\rows} , \omega_I) / R^{*}}
\lone{\nu_{\chi}}
\probP( \chi\, \probMM =0 )
\]

\autoref{lem:centralinequality} will be proven by showing the
following estimate and summing over all $I$.

\begin{sublemma}
For any ideal $I$ of $R$,
\begin{equation}
\label{eqn:chisumomegai}
\sum_{\chi \in Sur(R^n , \omega_I)/R^{*} }
\lone{\nu_{\chi}} \probP(\chi \, \probM_{\rows, \cols} = 0 ) \leq
\end{equation}
\[
\leq
O\Big(
(1+\epsilon)^{\rows}
\ltwo{\nu \mod I}
\Big)
+
\]
\[
+
O\left(
(1+\epsilon)^{\rows}
\maxbound^{\rows}
\right)
\]
where the implied constants depend on $u$, $\epsilon$, $R$ and the
distribution of $\xi$.
\end{sublemma}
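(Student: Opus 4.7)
The plan is to apply Cauchy--Schwarz in the $\chi$ variable, using two auxiliary inputs: the $l^1$/$l^2$ comparison for each $\nu_\chi$ coming from \autoref{lem:measuresonmodules}, and the ``fundamental moment inequality'' of \S \ref{sec: random matrix inequality} to control moments of $\probP(\chi \, \probM_{n,n+u} = 0)$. The fact that the target bound is a sum of two terms suggests splitting the $\chi$-sum into two regimes: a ``main'' regime handled by Cauchy--Schwarz against $\ltwo{\nu \mod I}$, and a ``degenerate'' regime handled via the uniform bound (\ref{eqn:uniformestimate}).

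To begin, I would exploit the decomposition from \autoref{lem:measuresonmodules}. Since each $\nu_\chi$ (for $\chi \in Sur(R^n, \omega_I)/R^*$) is constant on the $|\omega_I|$-many cosets of $\ker \chi$, the pointwise bound $\lone{\nu_\chi}^2 \leq |\omega_I| \ltwo{\nu_\chi}^2$ holds. Because every such $\chi$ annihilates $IR^n$, these components are precisely the nontrivial pieces of the decomposition of $\projj_{IR^n} \nu$, so by orthogonality $\sum_\chi \ltwo{\nu_\chi}^2$ is controlled by $\ltwo{\nu \mod I}^2$ (up to a fixed combinatorial constant depending on $R$ and $I$). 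Cauchy--Schwarz then yields
\[
\sum_\chi \lone{\nu_\chi} \probP(\chi \, \probM_{n,n+u} = 0) \leq \sqrt{|\omega_I|} \, \ltwo{\nu \mod I} \left( \sum_\chi \probP(\chi \, \probM_{n,n+u} = 0)^2 \right)^{1/2},
\]
reducing the sublemma to a pure second-moment estimate for the random matrix.

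For this moment estimate, I would invoke \autoref{thm:fundamentalmomentinequality}. Since $\probP(\chi \, \probM_{n,n+u} = 0) = \probP(\chi \vvector = 0)^{n+u}$ (with $\vvector$ a single iid column), the core computation is a Fourier expansion of $\probP(\chi \vvector = 0)$ over the characters of $\omega_I$, expressing this probability as an average of products $\prod_j \probE[\psi(\chi_j \xi)]$. The moment inequality should produce a main bound of the form $O\!\left((1+\epsilon)^{2n}\right)$ together with an absolute correction term $O\!\left((1+\epsilon)^{2n} \maxbound^{2n}\right)$ arising from exceptionally degenerate $\chi$'s; after taking the square root and multiplying by $\sqrt{|\omega_I|} \ltwo{\nu \mod I}$, these two contributions produce the two claimed terms of the sublemma. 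If the moment inequality does not split cleanly in this way, the fallback is to split the $\chi$-sum by hand into ``typical'' $\chi$ (handled by Cauchy--Schwarz as above) and ``degenerate'' $\chi$ (for which (\ref{eqn:uniformestimate}) provides the uniform bound $\lone{\nu_\chi} \leq \sqrt{|\omega_I|}$, paired with a direct enumeration).

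The main obstacle will be the Fourier analysis governing $\probP(\chi \vvector = 0)$ and its dependence on the arithmetic structure of $(\chi_1, \ldots, \chi_n) \in \omega_I^n$. One must bound this probability by $\maxbound$ uniformly across non-degenerate surjective $\chi$, and simultaneously control the rare $\chi$'s for which $\probP(\chi \vvector = 0)$ is abnormally large---namely, those whose coordinates all lie in a proper submodule of $\omega_I$, or for which the characteristic of $R/\mathfrak{m}$ forces cancellation. This combinatorial enumeration is where the non-concentration hypotheses on $\xi$---that $\xi$ is not supported on a translate of an ideal or of a subring of $R$---enter decisively, and it is the central technical content expected from \autoref{thm:fundamentalmomentinequality}.
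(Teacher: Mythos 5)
The main route you propose has a genuine gap, and it is quantitative, not cosmetic. Your single Cauchy--Schwarz in $\chi$ rests on the two inputs $\lone{\nu_\chi}^2\le|\omega_I|\,\ltwo{\nu_\chi}^2$ and $\sum_\chi\ltwo{\nu_\chi}^2\lesssim\ltwo{\nu \bmod I}^2$, but these cannot hold simultaneously in the same normalization. If all $l^2$ norms are taken on $(R/I)^{\rows}$, Bessel does give $\sum_\chi\ltwo{\nu_\chi \bmod I}^2\le\ltwo{\nu\bmod I}^2$, but the $l^1$--$l^2$ comparison for a measure constant on $\ker(\chi)$-cosets then reads $\lone{\nu_\chi}\le |R/I|^{\rows/2}\,\ltwo{\nu_\chi\bmod I}$; the constant $\sqrt{|\omega_I|}$ is correct only if the $l^2$ norm is taken on the $|\omega_I|$-point quotient $R^{\rows}/\ker(\chi)$, and then the Bessel step against $\ltwo{\nu\bmod I}$ picks up the factor $|R/I|^{(\rows-1)/2}$. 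Either way the honest output of your Cauchy--Schwarz is $|\omega_I|^{\rows/2}\,\ltwo{\nu\bmod I}\,\big(\sum_\chi\probP(\chi\,\probMM=0)^2\big)^{1/2}$, and this exponential prefactor can only be paid for by the \emph{typical} probabilities $\approx|\omega_I|^{-\cols}$, not by the degenerate ones. Concretely, take $R=\Z/p^2$, $I=(p)$, and $\xi$ equal to $0$ with probability $1-\delta$ and to $1$ with probability $\delta$ (this satisfies both non-degeneracy hypotheses); the surjection $\chi\colon x\mapsto p\,x_1$ has $\probP(\chi\,\probMM=0)=(1-\delta)^{\cols}=\maxbound^{\cols}$, so your second moment is at least $\maxbound^{2\cols}$ and the corrected Cauchy--Schwarz bound is at least of order $(\sqrt{p}\,\maxbound)^{\rows}\ltwo{\nu\bmod I}$, which for large $p$ and small $\delta$ grows exponentially and cannot yield the claimed right-hand side. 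Two further mismatches point the same way: \autoref{thm:fundamentalmomentinequality} bounds the truncated first moment $\sum_\chi\max\big[\probP(\chi\,\probMM=0)-((1+\epsilon_0)/|\omega_I|)^{\cols},0\big]$, not a second moment; and your final multiplication attaches $\ltwo{\nu\bmod I}$ to \emph{both} resulting terms, whereas the second term of the sublemma carries no such factor.

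The pairing has to be made the other way round, which is what the paper does. One never applies Cauchy--Schwarz across the pairing of $\lone{\nu_\chi}$ with the probability; instead one splits the probability itself, $\probP(\chi\,\probMM=0)\le\max\big[\probP(\chi\,\probMM=0)-\tau,0\big]+\tau$ with $\tau=((1+\epsilon_0)/|\omega_I|)^{\cols}$. On the excess part the uniform bound (\ref{eqn:uniformestimate}), $\lone{\nu_\chi}\le\sqrt{|\omega_I|}$, is used, so that \autoref{thm:fundamentalmomentinequality} applied with $M=\omega_I$ controls the aggregate excess by $O\big((1+\epsilon')^{\rows}\maxbound^{\rows}\big)$ --- no enumeration of degenerate $\chi$ is needed and no $\ltwo{\nu\bmod I}$ appears, which is exactly why the second term of the statement is free of it. On the capped part the factor $\tau\approx(1+\epsilon_0)^{\cols}|\omega_I|^{-\cols}$ supplies precisely the $1/|M/IM|$ normalization required by \autoref{thm:maininequality}, which bounds $\frac{1}{|\omega_I|^{\rows}}\sum_{\chi}\lone{\nu_\chi}$ by $\ltwo{\nu\bmod I}/\sqrt{|\sfrac{R}{I}^{*}|}$ and so produces the first term; the Cauchy--Schwarz in the argument lives entirely inside that lemma, at the level of the components and the count of kernels. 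Your ``fallback'' of splitting into typical and degenerate $\chi$ is in the right spirit, but unless the split is made in the value of the probability, with the uniform bound paired with the large-deviation regime and the $\ltwo{\nu\bmod I}$ estimate paired with the cap, the degenerate surjections defeat the estimate as in the example above.
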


\newcommand{\Maxzero}[1]{
\max
\Big[
#1
-
\left(
\frac{1+\epsilon_0}{|\omega_I|}
\right)^{\cols}
,
0
\Big]
}

\newcommand{\sursubscript}
{
\chi \in Sur(R^{\rows}, \omega_I)/R^{*}
}

\begin{proof}
By \autoref{eqn:uniformestimate}, $|\nu_{\chi}|$ is bounded above by
\[
\sqrt{|\omega_I|} = \sqrt{|R/I|}
\]
\marginparr{Check This!!!!!}
Hence, we can separate the sum (\ref{eqn:chisumomegai}) into two parts:
\[
\sqrt{|R/I|} \sum_{\sursubscript}
\Maxzero{
\probP(\chi \, \probMM = 0)
} +
\]
\[
+ \sum_{\sursubscript}
\lone{\nu_{\chi}} \left( \frac{(1+\epsilon_0)}{|\omega_I|}\right)^{\cols}
\]

It remains to estimate the two terms. The two necessary estimate follow from inequalities that are proven in \S \ref{sec: random matrix inequality} and \S \ref{sec: decomposing measures}:

\paragraph{Applying the bounds from  \S \ref{sec: random matrix inequality} and \S \ref{sec: decomposing measures}}

\begin{itemize}

\item[(A)] By \autoref{thm:fundamentalmomentinequality}, for any $\epsilon'>0$,
\[
\sum_{\sursubscript} \Maxzero{\probP
(\chi \probMM = 0)
}
\leq
\]
\[
\leq
O
\left(
\max
\left[
\frac{1}{
char( R / \mathfrak{m} )
}
,
\linf{ \xi \mod \mathfrak{m} }
\right]^{\rows}
(1+\epsilon')^{\rows}
\right)
\]
where the implied constant depends on $\epsilon'$, $R/I$, $u$
and the distribution of $\xi \mod I$. 
\newline
\newline
Moreover, this inequality remains valid if we replace some of the entries of $\probM_{n,n+u}$ by independent uniformly random variables.

\marginparr{Sort out the epsilons... perhaps replace [cepsilon] by [delta]... }

\item[(B)] By \autoref{thm:maininequality},
\[
\label{eqn:finalequation}
\frac{1}{|\omega_I|^{\rows}}
\sum_{\sursubscript}
\lone{\nu_\chi}=
\]
\begin{equation}
=\frac{1}{|\omega_I|^{\rows}}
\sum_{\sursubscript}
\lone{\nu_\chi}
\leq
\frac{1}{\sqrt{\sfrac{R}{I}^{*}}}
\ltwo{\nu \mod I}
\end{equation}

\end{itemize}
\end{proof}
\marginparr{Perhaps expand on how to go from product measure to product
formula, at some point.}
\marginparr{Rewrite this part of the proof, the push-forward part.}

\newcommand{\bigosum}{\bigoplus}
\newcommand{\osum}{\oplus}
\newcommand{\sgn}{sgn}
\newcommand{\Sset}{S}
\newcommand{\PP}{\mathcal{P}}


\newcommand{\testtest}{test}
\newcommand{\Cc}{\mathcal{C}}
\newcommand{\Com}{\mathbb{C}}
\renewcommand{\Mm}{M}
\newcommand{\cepsilon}{\epsilon}
\newcommand{\Cchi}{m}
\newcommand{\Cchii}{\{ m_i \}}
\newcommand{\cxi}{\Cchi_i \xi_i }
\newcommand{\ppM}{\probM_{n,n+u}}
\renewcommand{\linf}[1]{
\Big| \Big| #1  \Big| \Big|_{l^{\infty} }
}
\renewcommand{\rows}{n}
\renewcommand{\cols}{n+u}
\newcommand{\leqsim}{\lesssim}
\newcommand{\ann}[1]{\text{ann} \, #1}
\newpage
\section{A random matrix inequality}
\label{sec: random matrix inequality}
\paragraph{} \textit{Short Summary.} In this section we prove a general inequality for random
matrices with i.i.d entries over local rings. This inequality bounds a quantity, expression (\ref{eqn:maintermtobound}),
that measures the average number of surjections to a module $M$ from
the cokernel of a random matrix, in a certain large deviation regime. The quantity we bound is related to, but is distinct from the $M$-moment of the cokernel of the random matrix.

\subsection{Introduction}

Let $R$ be a local ring and let $u$ be a fixed integer. Suppose that
$\probM_{n,n+u}$ is a
random matrix whose entries are i.i.d. random variables, denoted by
$\xi_{ij}$. Then \[ coker(\ppM) \] is a random $R$-module. Let $M$ be
a finite $R$-module.
The $M$-\textit{moment} of the random module $coker(\ppM)$ is defined to be:

\begin{equation}
\label{eqn:momentdefinition}
\probE\Big(
\#Sur \big(coker (\ppM), M \big)
\Big).
\end{equation}

The moment method, pioneered in \cite{WoodMoments}, uses the calculation of
$M$-moments for all $M$ as $n\rightarrow \infty$ to determine the
asymptotic distribution of $coker(\ppM)$. This method has by now been used to
establish the universality of cokernels of random matrices in a
wide range of settings. See \cite[\S 2 and \S 3]{WoodICM} for a
fairly recent survey.

\marginparr{In a forthcoming paper, we re-prove and extend some known universality
results using a different approach to random matrix theory, based on the Lindeberg Replacement Technique (\cite{TaoVuTwo} \cite{Lindeberg}). This is a close relative of the approach in \cite{Maples1}. Our approach does not use the
moment method, but nonetheless requires a bound on a quantity,
(\ref{eqn:maintermtobound}), that is
closely connected to (\ref{eqn:momentdefinition}). This paper will focus on
bounding the quantity (\ref{eqn:maintermtobound}), which we define below.}

\paragraph{The quantity that we wish to bound.}

To put our estimate in context, we give a rough outline of how one
would go about calculating the moment (\ref{eqn:momentdefinition}). By
a standard manipulation, (\ref{eqn:momentdefinition}) can be
re-written as a sum over $f \in Sur(R^{\rows},M)$:

\begin{equation}
\label{eqn:rewrittenmoment}
\sum_{f \in Sur(R^{\rows}, M) }
\probP \Big(
    f \left( \ppM \right) = 0
\Big)
\end{equation}
    where $f(\mathcal{M})=0$ means that the evaluation of  $f$ on every
column vector of $\mathcal{M}$ is $0$.

When applying the moment method,
\begin{itemize}
\item[(A)] we wish to establish that for
\textit{most} $f$, and under certain mild conditions on the
distribution of $\xi$,
\begin{equation}
\label{eqn:probf}
\probP(f(\ppM)=0) \approx
\frac{1}{|\Mm|^{\cols}},
\end{equation}
\item[(B)] it is necessary to show that those $f$ for which
(\ref{eqn:probf}) does not hold, have a negligible contribution to the sum
(\ref{eqn:rewrittenmoment}).
\end{itemize}

In our approach, we are interested \textit{solely} in part (B), i.e. the contribution of those $f$ for which
\[
\probP\big(  f(\ppM) =0 \big)
\]
  is atypically large. Specifically, we are interested in the quantity:

\newcommand{\maxzero}[1]{\max \left(
#1 -
\left(
\frac{1+\epsilon_0}{|\Mm|}
\right)^{\cols}
\, , \,0 \,\right)
}

\begin{equation}
\label{eqn:maintermtobound}
\sum_{f \in Sur(R^{\rows}, M) } \maxzero{
\probP
\big(  f(\ppM)=0 \big)
}
\end{equation}

The main theorem we establish is that under the condition that $R$ is
a local ring, and under certain conditions\footnote{We believe these conditions to be necessary.} on $\xi$,
(\ref{eqn:maintermtobound}) decreases exponentially with $\rows$, for
any $\epsilon_0$:

\begin{theorem}
Suppose that $R$ is a local ring. Suppose that the support of $\xi$ is
not concentrated on the translate of a subring or the translate of an
ideal of $R$.
Then, for any finite module $M$, (\ref{eqn:maintermtobound})
decreases exponentially with $\rows$.
\end{theorem}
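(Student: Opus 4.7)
The plan is to use Fourier analysis on the finite abelian group $M$. For each character $\chi \in \widehat{M}$ and $m \in M$, set $\hat{\xi}_\chi(m) := \probE[\chi(m\xi)]$. Because the columns of $\probM_{n,n+u}$ are i.i.d., for any $f = (m_1, \ldots, m_n) \in \text{Sur}(R^n, M)$,
$$\probP\bigl(f(\probM_{n,n+u}) = 0\bigr) = \left(\frac{1}{|M|}\sum_{\chi \in \widehat{M}}\prod_{i=1}^n \hat{\xi}_\chi(m_i)\right)^{n+u}.$$
The trivial character contributes $1/|M|$, so a surjection $f$ can enter (\ref{eqn:maintermtobound}) only when the non-trivial characters force the inner sum past $(1+\epsilon_0)/|M|$. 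By the triangle inequality, this forces some $\chi^\ast \neq 1$ with $\prod_i |\hat{\xi}_{\chi^\ast}(m_i)| \geq \epsilon_0/(|M|-1)$.

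The next step is to stratify the surjections by their dominant character $\chi^\ast$ and apply a Markov-type moment bound. Bounding $\max(x-y,0)$ crudely by $x$ and expanding, the sum (\ref{eqn:maintermtobound}) is controlled by expressions of the form
$$\sum_{f \in \text{Sur}(R^n, M)} \prod_i |\hat{\xi}_{\chi^\ast}(m_i)|^{n+u} \leq \left(\sum_{m \in M}|\hat{\xi}_{\chi^\ast}(m)|^{n+u}\right)^n.$$
Exponential decay in $n$ will follow once we show that
$$\rho := \max_{\chi \neq 1}\ \frac{1}{|M|}\sum_{m \in M} |\hat{\xi}_\chi(m)|$$
is strictly less than $1$, with quantitative control matching the $\max\bigl(\linf{\xi \mod \mathfrak{m}}, 1/\mathrm{char}(R/\mathfrak{m})\bigr)$ form claimed in \autoref{thm:fundamentalmomentinequality}.

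The structural heart of the argument, where both non-degeneracy hypotheses enter, is the bound on $\rho$. Since $R$ is local, the ideal condition is equivalent to $\linf{\xi \mod \mathfrak{m}}<1$ and handles the ``generic'' characters; the extreme case $|\hat{\xi}_\chi(m)|=1$ for every $m \in M$ would force the difference set of $\text{supp}(\xi)$ into the ideal $\{r \in R : rM \subseteq \ker\chi\}$, directly contradicting the ideal hypothesis. The subring hypothesis covers the intermediate case relevant when $R/\mathfrak{m}$ is a proper extension of $\F_p$: a character factoring through a strict subfield of the residue field would force $\xi$ modulo $\mathfrak{m}$ into a translate of that subfield, which pulls back to a translate of a subring of $R$, contradicting the second hypothesis. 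The $1/\mathrm{char}(R/\mathfrak{m})$ term in \autoref{thm:fundamentalmomentinequality} reflects this residue-field contribution.

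The main obstacle will be making this structural step quantitative and uniform in $\chi$: converting the qualitative dichotomy ``either the Fourier coefficients decay or a forbidden concentration occurs'' into an effective upper bound on $\rho$. The difficulty is that the set $B_\chi = \{m : |\hat{\xi}_\chi(m)|=1\}$ is only an additive subgroup of $M$, not automatically an $R$-submodule, so translating a large $B_\chi$ into a statement about $\text{supp}(\xi)$ requires a case analysis organized by the annihilator of $\chi$ in $R$ and by whether $\chi$ factors through the residue field. Once this structural bound is in hand, summing over the non-trivial characters and absorbing polynomial and $\epsilon_0$-dependent factors into $(1+\epsilon')^n$ for a slightly enlarged $\epsilon'$ yields the required exponential decay.
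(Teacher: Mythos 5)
The critical flaw is the claimed intermediate bound. Writing $B_{\chi}=\{m\in M:|\hat{\xi}_{\chi}(m)|=1\}$, note that $\hat{\xi}_{\chi}(0)=1$ for every $\chi$, so $0\in B_{\chi}$ always, and the inner sum in your displayed inequality satisfies $\sum_{m\in M}|\hat{\xi}_{\chi^{\ast}}(m)|^{n+u}\geq 1$ and in fact tends to $|B_{\chi^{\ast}}|\geq 1$ as $n\to\infty$. Hence the right side $\bigl(\sum_{m\in M}|\hat{\xi}_{\chi^{\ast}}(m)|^{n+u}\bigr)^{n}$ is $\geq 1$ and typically grows like $|B_{\chi^{\ast}}|^{n}$; it does not decay. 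The condition $\rho<1$ cannot repair this: $\rho\geq 1/|M|$ always (again because of the $m=0$ term), so $(\rho|M|)^{n}\geq 1$. The root cause is that by bounding $\max(x-y,0)\leq x$ you have discarded the subtracted term $((1+\epsilon_{0})/|M|)^{n+u}$, which is precisely what annihilates the $\asymp|M|^{n}$ typical surjections whose probability is close to $1/|M|^{n+u}$. Once that subtraction is gone you are essentially estimating the $M$-moment of $\coker(\probM_{n,n+u})$, which converges to a nonzero constant rather than to $0$.

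The subtraction must therefore do real work, and the paper's proof is built around it. It bounds $\probP(f(\probM_{n,n+u})=0)$ by $\|\sum_{i}m_{i}\xi_{i}\|_{l^{\infty}}^{n+u}$ and sorts the tuples $\{m_{i}\}$ into three types according to the Fourier transform of $\sum_{i}m_{i}\xi_{i}$. Tuples whose nontrivial Fourier coefficients are all $\leq\epsilon/|M|$ (Type $1$) have $\|\sum_{i}m_{i}\xi_{i}\|_{l^{\infty}}<(1+\epsilon_{0})/|M|$, so the max with zero kills them outright --- this is where the subtraction is spent. Only the remaining tuples need to be counted, and the exponential decay comes from pairing a small $l^{\infty}$ bound of the form $(1+\epsilon)/|\pi|$ (or $(1+\epsilon)(1-\beta)/|\pi|$ when $\pi$ is an $R$-module) against a count of admissible tuples that is $\lesssim(|\pi|/\mathrm{char}(R/\mathfrak{m}))^{n}$ or $\lesssim|\pi|^{n}(1+\epsilon)^{n}$, where $\pi$ is a proper additive subgroup of $M$ supporting $\sum_{i}m_{i}\xi_{i}$ up to a bounded number of indices (\autoref{thm:auxcthree}). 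Your reading of the two nondegeneracy hypotheses --- the subring hypothesis controlling the case where $\pi$ is an additive subgroup but not an $R$-module, the ideal hypothesis giving $\|\xi\bmod\mathfrak{m}\|_{l^{\infty}}<1$ --- is accurate and matches the paper's Type $2$ / Type $3$ split (\autoref{lem:notamodule} and \autoref{lem:betaepsilon}), but those structural observations cannot give decay until they are coupled to a genuine use of the subtraction.
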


We give a quantitative statement below.

\begin{definition}
Define $\beta$ such that:
\[
\max\left(
\linf{ \xi \mod \mathfrak{m}  }
,
\frac{1}{ char( R / \mathfrak{m} ) }
\right) = 1 - \beta
\]
where $\mathfrak{m}$ is the maximal ideal of $R$ and $l^{\infty}$
refers to the $l^{\infty}$ (or $sup$) norm of the probability
distribution, that $\xi$ induces on $R/\mathfrak{m}$.
  \end{definition}

\begin{theorem}
\label{thm:fundamentalmomentinequality}
For any positive $\epsilon'$, (\ref{eqn:maintermtobound}) is bounded above by
\begin{equation}
\label{eqn:bigo}
O \Big(
(1-\beta)^{\rows}
(1+\epsilon')^{\rows}
\Big).
\end{equation}
The implied constant depends
on $\epsilon'$, $M$ and $u$, and the minimal non-zero value of 
\begin{equation}
\label{eqn:minnonzero}
\probP(\xi \equiv r \mod \ann{M}) \, r \in R.
\end{equation}
\end{theorem}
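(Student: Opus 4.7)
The plan is to use discrete Fourier analysis on $M$. For a surjection $f: R^{\rows} \to M$, write $m_i = f(e_i)$; by independence of the $\cols$ columns of $\ppM$ and Fourier inversion on $M$ we have
\[
\probP(f(\ppM)=0) = \left( \frac{1}{|M|} \sum_{\chi \in \hat{M}} \prod_{i=1}^{\rows} \phi_\chi(m_i) \right)^{\cols}, \qquad \phi_\chi(m) := \probE[\chi(\xi m)].
\]
Isolating the trivial character this equals $|M|^{-\cols}(1+S(f))^{\cols}$ with $S(f) = \sum_{\chi \neq 0}\prod_i \phi_\chi(m_i) \in [-1,|M|-1]$. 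The truncation in (\ref{eqn:maintermtobound}) is nonzero exactly when $S(f) > \epsilon_0$, which forces at least one non-trivial $\chi$ to satisfy $\prod_i |\phi_\chi(m_i)| \geq \epsilon_0/(|M|-1)$.

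The second ingredient is a quantitative Fourier decay estimate for $\phi_\chi$. For any character $\eta$ of the additive group of $R$ that is non-trivial on $R/\mathfrak{m}$, a short computation using both $\linf{\xi \bmod \mathfrak{m}}$ and $1/\mathrm{char}(R/\mathfrak{m})$ --- precisely the quantities packaged by $\beta$ --- yields $|\hat\xi(\eta)| \leq 1 - \beta$. Translating to $M$: $|\phi_\chi(m)| \leq 1 - \beta$ whenever $r\mapsto \chi(rm)$ is a non-trivial character of $R/\mathfrak{m}$, equivalently, whenever $m$ lies outside a proper submodule $H_\chi \subsetneq M$ determined by $\chi$ and the $\mathfrak{m}$-adic filtration of $M$. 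On $H_\chi$ only the trivial bound $|\phi_\chi(m)| \leq 1$ is available.

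The proof is then completed by a combination of a Chebyshev-type argument and combinatorial counting. For each non-trivial $\chi$, the constraint $\prod_i |\phi_\chi(m_i)| \geq \epsilon_0/(|M|-1)$ forces all but $C = C(\epsilon_0,\beta)$ of the indices to have $m_i \in H_\chi$, so the number of tuples that can contribute is polynomial in $\rows$. Bounding the indicator of the bad event by $(\epsilon_0/(|M|-1))^{-K}|\prod_i\phi_\chi(m_i)|^{K}$ for a large constant $K$ and summing unrestrictedly over $(m_1,\ldots,m_\rows)$, the sum factorises as a product of single-site integrals $\sum_{m \in M}|\phi_\chi(m)|^{K}$, each of which splits into a "bad" contribution from $H_\chi$ (bounded by $|H_\chi|$) and a "good" contribution from its complement (bounded by $(|M|-|H_\chi|)(1-\beta)^{K}$). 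Choosing $K$ large enough that the good factor dominates yields the decay rate $(1-\beta)^{\rows}$ per site. Summing over the $|M|-1$ choices of $\chi$ and over finitely many candidate submodules, and absorbing polynomial corrections into $(1+\epsilon')^{\rows}$, produces (\ref{eqn:bigo}).

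The main technical obstacle I anticipate is the uniform identification of the submodule $H_\chi$ across all non-trivial characters: for $\chi$ that does not factor through $M/\mathfrak{m}M$, the straightforward "mod $\mathfrak{m}$" Fourier decay does not apply directly, and one must work in successive strata of the $\mathfrak{m}$-adic filtration of $M$, matching each layer with a corresponding layer of $\hat M$ and verifying that the exceptional set at each stratum remains a proper submodule governed by the same $\beta$. The hypothesis ruling out concentration on the translate of a subring of $R$ is what ensures this uniformity, preventing a non-trivial additive subgroup of $R$ from making $\phi_\chi$ identically $1$ on a submodule too large for the counting estimate to control.
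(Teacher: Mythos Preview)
Your overall Fourier-analytic framing is reasonable, but the key quantitative input is wrong. You assert that for any character $\eta$ of $R$ that is non-trivial on $R/\mathfrak{m}$ one has $|\hat\xi(\eta)|\le 1-\beta$, with $1-\beta=\max(\linf{\xi\bmod\mathfrak m},\,1/\mathrm{char}(R/\mathfrak m))$. This is false. Take $R=M=\mathbb{F}_p$ with $p$ large and $\xi$ uniform on $\{0,1\}$: then $1-\beta=1/2$, yet for the character $\eta(x)=e^{2\pi i x/p}$ one has $|\hat\xi(\eta)|=|\cos(\pi/p)|$, which is as close to $1$ as you like. Consequently your claimed bound $|\phi_\chi(m)|\le 1-\beta$ for $m\notin H_\chi$ fails, and with it the factorised single-site estimate and the rate $(1-\beta)^{\rows}$ that you extract from it. The best uniform pointwise bound on non-unit Fourier coefficients is only some constant $c<1$ depending on the full distribution of $\xi\bmod\mathrm{ann}\,M$ (this is exactly the paper's constant $\mathbf{C}$ and the associated integer $T$), and using that alone would give $c^{\rows}$, not $(1-\beta)^{\rows}$.

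The paper obtains the sharper rate by a genuinely different mechanism. It does not bound individual $\phi_\chi(m)$; instead it classifies the tuple $\{m_i\}$ according to the Fourier spectrum of the \emph{full} sum $\sum_i m_i\xi_i$, splitting into three types. For the ``large but not $1$'' type it first peels off a bounded set $\mathcal{I}$ of exceptional indices so that the remaining sum is $\epsilon$-equidistributed on an additive subgroup $\pi\subset M$, and then bounds $\linf{\sum_i m_i\xi_i}$ by $(1+\epsilon)|\pi|^{-1}\linf{m_j\xi_j\bmod\pi}$ for a single $j\in\mathcal{I}$. The factor $\linf{m_j\xi_j\bmod\pi}$ is an $l^\infty$ probability mass (not a Fourier coefficient), and \emph{that} is what is bounded by $\linf{\xi\bmod\mathfrak m}\le 1-\beta$ when $\pi$ is an $R$-submodule. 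The other ingredient in $1-\beta$, namely $1/\mathrm{char}(R/\mathfrak m)$, does not arise from any decay estimate at all: it comes from the separate ``Type~2'' case where some Fourier coefficient is exactly $1$, forcing the support of $\sum m_i\xi_i$ into a proper additive subgroup $\pi$ that is \emph{not} an $R$-module, and then counting $\{m:\,m\cdot\mathrm{supp}(\xi)\subset\pi\}$, which by the subring hypothesis is a subgroup of $\pi$ of index at least $p$. Your sketch conflates these two very different sources of the exponent, and the ``short computation'' you allude to cannot exist.
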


\begin{remark}
More succinctly, we could say that the implied constant depends only on $\epsilon'$, $M$ and $u$, and the distribution of $\probP(\xi \equiv r \mod \ann{M})$. We have chosen the formulation above to be precise. In the sequel, we will use the symbol
\[
\alpha
\]
to denote (\ref{eqn:minnonzero}).
\end{remark}

\begin{remark}
Although we will prove \autoref{thm:fundamentalmomentinequality} for all $M$, in our application to random matrices, we will need to know the result only for the modules $M$ that satisfy:
\[
Hom(k,M) \cong k
\]
where $k$ is the residue field of $R$.
\end{remark}

\subsubsection{Approach}
\label{sec:approach}

\newcommand{\sumsubscript}{
\substack{
\Cchi_i \in \Mm
\\
span(\Cchi_i) = \Mm
}
}

First of all, we can use independence to rewrite
(\ref{eqn:maintermtobound}) as
\[
\sum_{\sumsubscript}
\maxzero{
\prod_{j} \probP \left( \sum_i \Cchi_i \xi_{ij} = 0  \right)
} \leq
\]
\[
\leq \sum_{\sumsubscript}
\maxzero{
\prod_{j} \linf{ \sum_i \Cchi_i \xi_{ij} } 
} =
\]
\begin{equation}
\label{eqn:linfsum}
=\sum_{\sumsubscript}
\maxzero{
\linf{ \sum_i \Cchi_i \xi_{i1} }^{\cols}
}
\end{equation}

For notational  convenience, we will denote $\xi_{i1}$ as $\xi_i$.

\begin{remark}
We will henceforth be interested in proving that the sum (\ref{eqn:linfsum})
is bounded above by (\ref{eqn:bigo}).
\autoref{thm:fundamentalmomentinequality} will immediately follow from
this bound.

\end{remark}

\begin{remark}
As we are now only interested in (\ref{eqn:linfsum}), we can make a
simplifying assumption.
\begin{itemize}
\item Note that the value of (\ref{eqn:linfsum}) does not change if we
translate the distribution of $\xi$ by $r \in R$, or if we multiply
the distribution by a unit in $R$. Furthermore, the condition that
$\xi$ is not supported on the translate of a subring also does not
change under these operations.
\item Note that the support of $\xi$ is not concentrated on the
translate of an ideal. Hence it must contain two elements whose
difference is not in $\mathfrak{m}$ and is hence a unit. Therefore,
after translating and multiplying by a unit, we can arrange for the
support of the new random variable to contain $0$ and $1$.
\end{itemize}
Hence, in bounding (\ref{eqn:linfsum}), we can assume, from now on, that
\begin{equation}
\label{eqn:zeroone}
\text{the support of $\xi$ contains $0$ and $1$}.
\end{equation}
\end{remark}

\subsubsection{Strategy}

\begin{definition}
For the rest of this paper, we choose $\cepsilon$ such that
\[
0 < \cepsilon < \epsilon_0
\]
\end{definition}

We estimate (\ref{eqn:linfsum}) by separating the sum into three
components, based on the Fourier transform of the random
variable:
\begin{equation}
\label{eqn:cxi:intro}
\sum_i \cxi.
\end{equation}

\begin{itemize}
\item[\textbf{Type 1}] We will say that $ \{ m_i \}$ is of Type $1$ if the
non-trivial values of the Fourier transform of
\newcommand{\fouriertransform}{
   \mathcal{F} \left[ \sum_i \cxi \right]
}

\[
\sum_i \cxi
\]
are bounded above by $\cepsilon/|\Mm|$ in absolute value.

\item[\textbf{Type 2}]
We say that $\{ \Cchi_i \}$ is of Type $2$ if all the values of the
Fourier transform of
\[
\sum_i \cxi
\]
are \textit{either} $1$ \textit{or} are bounded above by
$\cepsilon/|\Mm|$ in absolute value.

\item[\textbf{Type 3}]
Otherwise, we say that $\{ m_i \}$ is of Type $3$.
\end{itemize}

If $\{ m_i \}$ is of type $j$, for $j=1,2,3$ we will write $\{ m_i \}
\in \Cc_j$.
\newline
\newline
We will call the values of the Fourier transform "small", if they are
bounded above by $\cepsilon/|\Mm|$. We will call the values of
the Fourier transform "large" if they are not "small" and do not have
absolute value equal to $1$. Finally, we remark that because of
$(\ref{eqn:zeroone})$, all the random variables we consider
in subsequent sections contain $0$ in their support. Therefore, if the
Fourier transform of such a variable has absolute value $1$, then it
must equal $1$.
\newline
\newline

In the ensuing sections, we will bound the contributions from $\Cc_1$,
$\Cc_2$ and $\Cc_3$, separately. Indeed, as we shall see, $\Cc_1$ does
not contribute, and the contributions from $\Cc_2$ and $\Cc_3$ are each bounded above by (\ref{eqn:bigo}).

\subsubsection{Outline of this section}
In the first section, we will define the notion of
$\cepsilon$-equidistribution and establish some basic facts. In the
subsequent sections, we analyze successively the contributions from
$\Cc_1$, $\Cc_2$ and $\Cc_3$.



\subsection{The notion of $\cepsilon$-equidistribution}

\begin{definition}
We say that a random variable $\zeta$ is $\cepsilon$-equidistributed
on a finite abelian group $G$ if every non-trivial Fourier coefficient
of $\zeta$ is bounded above by $\cepsilon/|G|$.
\end{definition}

\begin{lemma}
\label{lem:equidist}
If $\zeta$ is $\cepsilon$-equidistributed on $G$, then
\[
\linf{\zeta}=\max_{g \in G} \probP(\zeta =g ) \leq \frac{1+\cepsilon}{|G|}
\]
\end{lemma}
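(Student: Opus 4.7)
The plan is to apply Fourier inversion on the finite abelian group $G$ directly, with no further machinery. Let $\hat{\zeta}(\chi) = \mathbb{E}[\chi(\zeta)] = \sum_{h \in G} \probP(\zeta = h)\chi(h)$ denote the Fourier coefficients of the distribution of $\zeta$, and recall the inversion formula
\[
\probP(\zeta = g) = \frac{1}{|G|}\sum_{\chi \in \widehat{G}} \hat{\zeta}(\chi)\,\overline{\chi(g)}.
\]
The trivial character gives $\hat{\zeta}(\mathbf{1}) = 1$, so I would isolate its contribution of $1/|G|$ and bound everything else by the triangle inequality. Using the $\cepsilon$-equidistribution hypothesis, each of the remaining $|G|-1$ non-trivial Fourier coefficients has absolute value at most $\cepsilon/|G|$, which yields
\[
\probP(\zeta = g) \leq \frac{1}{|G|} + \frac{1}{|G|}\sum_{\chi \neq \mathbf{1}} |\hat{\zeta}(\chi)| \leq \frac{1}{|G|} + \frac{|G|-1}{|G|}\cdot \frac{\cepsilon}{|G|} \leq \frac{1+\cepsilon}{|G|}.
\]
Taking the maximum over $g \in G$ gives the desired inequality.

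There is essentially no obstacle here: the only thing to be careful about is the normalization convention for $\hat{\zeta}$, which must be chosen consistently with the definition of $\cepsilon$-equidistribution so that the $|G|$ in the denominator of $\cepsilon/|G|$ cancels against the $|G|-1$ factor from the number of non-trivial characters. With the convention $\hat{\zeta}(\chi) = \mathbb{E}[\chi(\zeta)]$ (so that inversion carries the $1/|G|$), the bound on the non-trivial coefficients is the right order of magnitude to produce the stated conclusion. The proof is then a single line of Fourier analysis.
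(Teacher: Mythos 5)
Your proof is correct and is exactly the argument the paper invokes in its one-line proof ("This follows readily from the inverse Fourier transform"); you have simply filled in the details. The normalization you chose ($\hat{\zeta}(\chi)=\mathbb{E}[\chi(\zeta)]$, with the $1/|G|$ on the inversion side) is the one consistent with the paper's definition of $\epsilon$-equidistribution, and the final estimate $\frac{1}{|G|} + \frac{|G|-1}{|G|}\cdot\frac{\epsilon}{|G|} \leq \frac{1+\epsilon}{|G|}$ is exactly right.
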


\begin{proof}
This follows readily from the inverse Fourier transform.
\end{proof}

We say that a $G$-valued random variable is
$\cepsilon$-equidistributed on a subgroup $\supp$ of $G$ if $\zeta$ is
supported on $\supp$ and $\cepsilon$-equidistributed on $\supp$. In \S
\ref{sec:cthree}, we will use the following estimate.

\begin{theorem}
\label{thm:equidist}
Suppose that $\zeta_1$ and $\zeta_2$ are $G$-valued random variables.
Now suppose that $\zeta_2$ is $\cepsilon$-equidistributed on a
subgroup $\pi$ of $G$. Then,
\[
\probP(\zeta_1 +\zeta_2 = g) \leq (1+\cepsilon)
\left( \frac{\probP(\zeta_1 \equiv g \mod \supp)}{|\supp|} \right)
\]
\end{theorem}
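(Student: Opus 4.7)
The plan is to reduce the claim directly to \autoref{lem:equidist} by conditioning on the value of $\zeta_1$. I will take as implicit the independence of $\zeta_1$ and $\zeta_2$, which is the natural reading of the statement (and is how the result will be invoked in \S \ref{sec: random matrix inequality}).

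First I would expand, using independence,
\[
\probP(\zeta_1 + \zeta_2 = g) = \sum_{h \in G} \probP(\zeta_1 = h)\,\probP(\zeta_2 = g - h).
\]
Because $\zeta_2$ is supported on $\supp$, only those $h$ with $g - h \in \supp$, equivalently $h \equiv g \mod \supp$, contribute. On this set of $h$, applying \autoref{lem:equidist} to $\zeta_2$ regarded as a random variable on the group $\supp$ gives the uniform pointwise bound $\probP(\zeta_2 = g - h) \leq (1+\cepsilon)/|\supp|$. Factoring this constant out of the sum and collecting the remaining mass yields
\[
\probP(\zeta_1 + \zeta_2 = g) \leq \frac{1+\cepsilon}{|\supp|} \sum_{h \equiv g \mod \supp} \probP(\zeta_1 = h) = (1+\cepsilon)\,\frac{\probP(\zeta_1 \equiv g \mod \supp)}{|\supp|},
\]
which is the desired inequality.

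There is no real obstacle here: the content of the statement is just the observation that convolving with a near-uniform distribution on a subgroup cannot concentrate probability mass above the near-uniform level, once one tracks the correct coset of $\zeta_1$ modulo $\supp$. The one point worth flagging is that \autoref{lem:equidist} must be applied on the group $\supp$ rather than on the ambient group $G$, since $\zeta_2$ is $\cepsilon$-equidistributed on $\supp$ and not on all of $G$; this is also why the bound carries $|\supp|$ in the denominator rather than $|G|$.
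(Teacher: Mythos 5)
Your proof is correct, and it takes a slightly different (and more elementary) route than the paper. The paper first conditions on the event $\zeta_1 \equiv g \mod \supp$ (noting that this event is forced by $\zeta_1+\zeta_2=g$ since $\zeta_2$ is supported on $\supp$), then invokes \autoref{lem:aux:equidist} — that adding an independent $\supp$-supported variable preserves $\cepsilon$-equidistribution, via multiplicativity of the Fourier transform — and finally applies \autoref{lem:equidist} to the sum $\zeta_1-g+\zeta_2$ regarded as a variable on $\supp$. You bypass \autoref{lem:aux:equidist} entirely: you expand the convolution, observe that only the coset $h \equiv g \mod \supp$ contributes, and bound $\probP(\zeta_2=g-h)$ pointwise by $(1+\cepsilon)/|\supp|$ using \autoref{lem:equidist} applied to $\zeta_2$ alone. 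Your argument needs only the $\ell^\infty$ bound $\linf{\zeta_2}\leq (1+\cepsilon)/|\supp|$ rather than the full Fourier-coefficient hypothesis, so it is marginally more general and avoids one auxiliary lemma; the paper's conditioning formulation is equivalent in content but routes the uniform bound through the convolved variable instead. You are also right to flag independence of $\zeta_1$ and $\zeta_2$ as an implicit hypothesis — the paper's proof uses it in exactly the same way (it is what makes $\zeta_1-g$ and $\zeta_2$ independent after conditioning), and it holds in the application.
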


\begin{lemma}
\label{lem:aux:equidist}
If $\zeta$ and $\zeta'$ are two independent random variables supported
on $\supp$, and $\zeta$ is $\cepsilon$-equidistributed on $\supp$, then
$\zeta+\zeta'$ is $\cepsilon$-equidistributed on $\supp$.
\end{lemma}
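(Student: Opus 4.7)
The plan is to prove this via the convolution-multiplication identity for the Fourier transform on the finite abelian group $\pi$. First I would observe that since $\zeta$ and $\zeta'$ are both supported on the subgroup $\pi$, their sum $\zeta + \zeta'$ is also supported on $\pi$ (this uses that $\pi$ is closed under the group operation). Hence checking $\epsilon$-equidistribution reduces to analyzing the Fourier coefficients of $\zeta+\zeta'$ with respect to characters of $\pi$ and showing that the non-trivial ones have absolute value at most $\epsilon/|\pi|$.

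The next step is to apply independence. For any character $\chi$ of $\pi$, we have
\[
\widehat{\zeta+\zeta'}(\chi) = \probE[\chi(\zeta+\zeta')] = \probE[\chi(\zeta)] \cdot \probE[\chi(\zeta')] = \widehat{\zeta}(\chi) \cdot \widehat{\zeta'}(\chi),
\]
using independence in the middle equality. Now fix a non-trivial character $\chi$ of $\pi$. Because $\zeta$ is $\epsilon$-equidistributed on $\pi$, the first factor satisfies $|\widehat{\zeta}(\chi)| \leq \epsilon/|\pi|$. The second factor satisfies the trivial bound $|\widehat{\zeta'}(\chi)| \leq 1$, since $\zeta'$ is a probability distribution and $|\chi| \equiv 1$. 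Multiplying these two bounds gives $|\widehat{\zeta+\zeta'}(\chi)| \leq \epsilon/|\pi|$, which is precisely the defining inequality for $\epsilon$-equidistribution of $\zeta+\zeta'$ on $\pi$.

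There is essentially no obstacle here: the statement is a direct packaging of the fact that convolution sends Fourier transforms to pointwise products, combined with the trivial bound $\|\widehat{\mu}\|_\infty \leq \|\mu\|_{l^1}$ for a probability measure $\mu$. The only point worth being careful about is the normalization convention for the Fourier transform and the fact that we must restrict to characters of $\pi$ rather than of the ambient $G$; but both $\zeta$ and $\zeta'$ being supported on $\pi$ means their Fourier analysis naturally takes place on $\pi$, and the definition of $\epsilon$-equidistribution on $\pi$ was set up to match this.
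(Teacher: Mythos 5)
Your proof is correct and is exactly the argument the paper intends: its one-line proof "multiplicativity of the Fourier transform" is precisely your convolution identity $\widehat{\zeta+\zeta'}(\chi)=\widehat{\zeta}(\chi)\,\widehat{\zeta'}(\chi)$ combined with the trivial bound $|\widehat{\zeta'}(\chi)|\leq 1$ for the non-trivial characters of $\pi$. No gaps; you have simply written out the details the paper leaves implicit.
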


\begin{proof} (of \autoref{lem:aux:equidist})
This follows from the multiplicativity of the Fourier transform.
\end{proof}

\begin{proof} (of \autoref{thm:equidist})
We rewrite
\[
\probP(\zeta_1 +\zeta_2 = g) = \probP(\zeta_1 -g +\zeta_2 = 0) =
\]
\[
\probP \big(\zeta_1 - g +\zeta_2 \equiv 0 \mod \supp | \zeta_1 \equiv
g \mod \supp \big)
\probP \big(\zeta_1 \equiv g \mod \supp \big)
\]
After we condition on $\zeta_1 - g \cong 0 \mod \supp$, both $\zeta_1
- g$ and $\zeta_2$ are independent random variables supported on
$\supp$. Hence, we can apply \autoref{lem:aux:equidist} to deduce
\autoref{thm:equidist}.
\end{proof}



\subsection{Sum over $\Cc_1$}

\begin{lemma}
\label{lem:cone}
If $\Cchii \in \Cc_1$,
\[
\linf{\sum_i \cxi}^{\cols} < \left( \frac{1+\epsilon_0}{|\Mm|} \right)^{\cols}
\]
\end{lemma}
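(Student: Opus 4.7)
The plan is to recognize that Lemma \ref{lem:cone} is essentially a direct reinterpretation of the definition of $\Cc_1$ combined with \autoref{lem:equidist}. By the definition of $\Cc_1$, if $\{m_i\} \in \Cc_1$, then every non-trivial Fourier coefficient of the $M$-valued random variable $\zeta \defeq \sum_i m_i \xi_i$ has absolute value at most $\cepsilon / |\Mm|$. In the language of the preceding subsection, this says precisely that $\zeta$ is $\cepsilon$-equidistributed on $\Mm$.

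With that identification in place, I would invoke \autoref{lem:equidist} to obtain
\[
\linf{\zeta} = \max_{m \in \Mm} \probP(\zeta = m) \leq \frac{1+\cepsilon}{|\Mm|}.
\]
Raising both sides to the $\cols$-th power and using the choice $\cepsilon < \epsilon_0$ from the preceding definition gives
\[
\linf{\sum_i \cxi}^{\cols} \leq \left(\frac{1+\cepsilon}{|\Mm|}\right)^{\cols} < \left(\frac{1+\epsilon_0}{|\Mm|}\right)^{\cols},
\]
which is the desired inequality.

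There is essentially no obstacle; the only minor subtlety is making sure that $\zeta$ is genuinely valued in $\Mm$, so that Fourier analysis on $\Mm$ applies without ambiguity. This is clear because each $m_i \in \Mm$, and it is consistent with the outer sum in (\ref{eqn:linfsum}) being restricted to $\{m_i\}$ with $\text{span}(m_i) = \Mm$. No further estimates are required; the content of the lemma lies entirely in unpacking the definition of $\Cc_1$ and applying the already-established equidistribution estimate.
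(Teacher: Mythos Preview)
Your proof is correct and follows essentially the same approach as the paper: unpack the definition of $\Cc_1$ to see that $\sum_i m_i\xi_i$ is $\cepsilon$-equidistributed on $\Mm$, apply \autoref{lem:equidist}, and then use $\cepsilon<\epsilon_0$ to obtain the strict inequality after raising to the $(\cols)$-th power.
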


\begin{corollary}
If $\Cchii \in \Cc_1$, $\Cchii$ does not contribute to (\ref{eqn:linfsum}).
\end{corollary}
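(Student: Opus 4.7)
The plan is to observe that membership in $\Cc_1$ is essentially a restatement of $\epsilon$-equidistribution on $\Mm$, and then invoke \autoref{lem:equidist} directly.

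More precisely, since $\{m_i\} \in \Cc_1$, every non-trivial Fourier coefficient of the $\Mm$-valued random variable $\sum_i m_i \xi_i$ has absolute value bounded by $\cepsilon/|\Mm|$. By the definition of $\cepsilon$-equidistribution, this means $\sum_i m_i \xi_i$ is $\cepsilon$-equidistributed on $\Mm$ (the spanning condition $\text{span}(m_i) = \Mm$ simply guarantees that we are working on the correct ambient group). Applying \autoref{lem:equidist} gives
\[
\linf{\sum_i \cxi} \leq \frac{1+\cepsilon}{|\Mm|}.
\]
Since $\cepsilon < \epsilon_0$ by the definition fixed at the beginning of the strategy subsection, this is strictly less than $(1+\epsilon_0)/|\Mm|$. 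Raising both sides to the $\cols$ power yields the claimed inequality, and the corollary that $\Cc_1$ does not contribute to (\ref{eqn:linfsum}) is immediate from the definition of $\maxzero{\cdot}$ which cuts off exactly at the threshold $\left( (1+\epsilon_0)/|\Mm| \right)^{\cols}$.

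This step has no real obstacle: the entire content of the lemma is packaged into the preceding definitions. The only thing to be careful about is confirming that the Fourier transform condition defining $\Cc_1$ refers to the Pontryagin dual of $\Mm$ itself (so that \autoref{lem:equidist} applies with $G = \Mm$), rather than to characters of some larger ambient group; the spanning hypothesis $\text{span}(m_i) = \Mm$ ensures this identification.
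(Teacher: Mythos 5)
Your proof is correct and follows essentially the same route as the paper: both arguments identify the $\Cc_1$ condition with $\cepsilon$-equidistribution on $\Mm$, apply \autoref{lem:equidist} to get $\linf{\sum_i \cxi} \leq (1+\cepsilon)/|\Mm|$, and then use $\cepsilon < \epsilon_0$ to conclude the term falls below the cutoff in the $\max(\cdot,0)$ truncation, so it contributes nothing to (\ref{eqn:linfsum}). No further comment is needed.
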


\begin{proof}
Suppose that $\{ m_i \} \in \Cc_1$. Therefore, the non-trivial Fourier
coefficients of
\begin{equation}
\label{eqn:cxisum}
\sum \cxi
\end{equation}
are bounded uniformly in absolute value by $\frac{\cepsilon}{|\Mm|}$.
Then, by \autoref{lem:equidist},
\[
\linf{\sum \cxi}  \leq \frac{1+\cepsilon}{|\Mm|}
\]
and therefore:
\[
\label{eqn:powersumcxi}
\linf{\sum \cxi}^{\cols}
\leq
\left(
\frac{1+\cepsilon}{|\Mm|}
\right)^{\cols}
<
\left(
\frac{1+\epsilon_0}{|\Mm|}
\right)^{\cols}
\]
\end{proof}


\subsection{Sum over $\Cc_2$}
Recall that $\xi_i$ are identically distributed $R$-valued random
variables, whose support is not contained in the translate of any
proper subring of $R$. By (\ref{eqn:zeroone}), we can assume that the
support of $\xi_i$ contains $0$ and $1$.

Recall that $\Cc_2$ is defined to consist of $\rows$-tuples of
elements of $M$, spanning $M$, such that:
\begin{itemize}
\item Every Fourier coefficient of the random variable
\[
\sum_i \Cchi_i \xi_i
\]
is either equal to $1$, or bounded above in absolute value by:
\[
\frac{\cepsilon}{|\Mm|}.
\]
\item At least one non-trivial Fourier coefficient is equal to $1$.
\end{itemize}

We have the following theorem:

\begin{theorem}
\label{thm:ctwo}
\[
\sum_{ \{ m_i \} \in \Cc_2 }
\linf{ \sum_i \Cchi_i \xi_i }^{\cols}
\leqsim
\Big(  \frac{1+\epsilon}{ char(R/\mathfrak{m}) }
\Big)^{\rows}
\]
where the implied constant depends only on $M$ and $u$.
\end{theorem}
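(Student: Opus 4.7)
The plan is to decompose the sum over $\Cc_2$ according to the subgroup $K_m := \{\chi \in \hat M : \hat X(\chi) = 1\}$ of $\hat M$, where $X := \sum_i \Cchi_i \xi_i$. By Type 2, $K_m \neq \{1\}$. Because $0, 1 \in \mathrm{supp}(\xi)$ by (\ref{eqn:zeroone}) and $\chi(0) = 1$, having $\hat X(\chi) = 1$ forces $\chi(\Cchi_i \xi_i) = 1$ almost surely for every $i$; equivalently, $\chi(A \Cchi_i) = 1$ for every $i$, where $A \subseteq R$ is the additive subgroup generated by $\mathrm{supp}(\xi)$. Hence, for every non-trivial $K \subseteq \hat M$, a tuple $\Cchii$ with $K_m \supseteq K$ has each $\Cchi_i$ in $H_K := \{m \in M : A m \subseteq K^\perp\}$. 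Fourier inversion then yields $\linf{X} \leq (|K_m| + \cepsilon)/|M|$, and partitioning $\Cc_2$ by $K_m$ reduces the theorem to estimating
\[
\sum_{K \neq \{1\}} |H_K|^{\rows}\,\Big(\tfrac{|K|+\cepsilon}{|M|}\Big)^{\cols}.
\]

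The main structural observation is that, under our running hypotheses (that $\mathrm{supp}(\xi)$ is not contained in a proper subring of $R$ and contains $0$ and $1$), $A$ generates $R$ \emph{as a subring}. A short induction on the expression of $r \in R$ as an integer polynomial in elements of $A$ then shows that every $A$-stable additive subgroup of $M$ is automatically an $R$-submodule. I would use this in two ways. First, if $K^\perp$ is itself an $R$-submodule, then any spanning tuple $\Cchii \subseteq K^\perp$ would give $M = \sum R \Cchi_i \subseteq K^\perp$, contradicting $K \ne \{1\}$; so such $K$ contribute nothing. Otherwise, $K^\perp$ is not $A$-stable.

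For such $K$, consider the $\Z$-linear map
\[
\Phi \colon K^\perp \longrightarrow \Hom_{\Z}(A, M/K^\perp), \qquad m \longmapsto (a \mapsto am + K^\perp).
\]
By construction $\ker \Phi = H_K$, and the failure of $A$-stability of $K^\perp$ means $\Phi$ is non-zero. Since both $|A|$ and $|M/K^\perp| = |K|$ are powers of $p := char(R/\maxmod)$, the target of $\Phi$ is a $p$-group, and therefore $|\im \Phi|$ is divisible by $p$. This gives the crucial estimate $|H_K| \le |K^\perp|/p = |M|/(p\,|K|)$.

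Substituting this bound into the sum and using $|K| \ge p$ yields, for each $K$,
\[
|H_K|^{\rows}\Big(\tfrac{|K|+\cepsilon}{|M|}\Big)^{\cols} \leq \frac{(1+\cepsilon/|K|)^{\rows}(|K|+\cepsilon)^u}{p^{\rows}|M|^u} = O_{M,u}\!\left(\tfrac{(1+\cepsilon)^{\rows}}{p^{\rows}}\right),
\]
and summing over the $O_M(1)$ non-trivial subgroups $K \subseteq \hat M$ completes the proof. The main technical point will be the ``$A$-stable $\iff$ $R$-submodule'' equivalence together with the divisibility-by-$p$ step for $|\im \Phi|$; the remainder is Fourier inversion and a union bound.
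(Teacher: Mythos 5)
Your proposal is correct and takes essentially the same approach as the paper: partitioning by $K_m$ is exactly dual to the paper's partitioning by the proper additive subgroup $\supp_{\Cchii} = K_m^\perp$ generated by $\mathrm{supp}(\sum_i \Cchi_i\xi_i)$, and your $\Hom_{\Z}(A, M/K^\perp)$ map is just a repackaging of the paper's stabilizer-subring lemma (\autoref{lem:notamodule}) and yields the same index-$p$ bound for the same reason (a spanning tuple forces $K_m^\perp$ not to be an $R$-submodule). The remaining steps — Fourier inversion to get $\linf{X}\le(|K_m|+\cepsilon)/|M|$ and summing over the finitely many subgroups — also mirror the paper's argument.
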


\begin{lemma}\
Suppose that $\Cchii \in \Cc_2$. There is a proper additive subgroup
$\supp_{\Cchii}$ of $M$ such that:
\begin{itemize}
\item The random variables $\cxi$ are supported on $\supp_{\Cchii}$.
\item We have the bound
\[
\linf{
\sum_i \cxi
}
\leq
\frac{1+\cepsilon}{ |\supp_{\Cchii}| }
\]
\end{itemize}
\end{lemma}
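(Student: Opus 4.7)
The plan is to define $\supp_{\Cchii}$ as the annihilator in $M$ of the characters whose Fourier coefficient on $\sum_i \Cchi_i \xi_i$ equals $1$, and then obtain the $l^\infty$ bound by Fourier inversion on this subgroup. Concretely, I would set
\[
H = \{ \chi \in \Hom(M, \Com^{\times}) : \probE[\chi(\textstyle\sum_i \Cchi_i \xi_i)] = 1 \},
\]
and let $\supp_{\Cchii} = H^\perp = \{ x \in M : \chi(x) = 1 \ \forall \chi \in H \}$. The Type 2 hypothesis guarantees that $H$ contains a nontrivial character, so $\supp_{\Cchii}$ is a proper subgroup of $M$. A direct check, using the fact that a Fourier coefficient equal to $1$ forces the corresponding random variable $\chi(\sum \Cchi_i \xi_i)$ to be almost surely equal to $1$, shows $H$ is a subgroup of $\hat M$.

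The crux of the argument, and what I expect to be the main obstacle, is to verify that each individual variable $\Cchi_i \xi_i$ lies in $\supp_{\Cchii}$ almost surely. For $\chi \in H$, independence factorises the Fourier coefficient:
\[
1 = \probE\bigl[\chi(\textstyle\sum_i \Cchi_i \xi_i)\bigr] = \prod_i \probE[\chi(\Cchi_i \xi_i)].
\]
Each factor has modulus at most $1$, so the product being $1$ forces every factor to have modulus exactly $1$, and hence each $\chi(\Cchi_i \xi_i)$ is almost surely a constant on the unit circle. Evaluating at $\xi_i = 0$, which lies in the support of $\xi_i$ by the reduction (\ref{eqn:zeroone}), pins that constant down to $\chi(0) = 1$. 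Hence $\chi(\Cchi_i \xi_i) = 1$ a.s.\ for every $\chi \in H$ and every $i$, which is exactly the statement that $\Cchi_i \xi_i \in \supp_{\Cchii}$ a.s.\ and proves the first bullet.

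For the $l^\infty$ bound I would then view $\sum_i \Cchi_i \xi_i$ as a random variable valued in $\supp_{\Cchii}$. Its Fourier transform on $\supp_{\Cchii}$ is indexed by $\hat M / H$: the trivial coset contributes Fourier coefficient $1$, while any nontrivial coset has a representative $\chi \notin H$, whose Fourier coefficient on $M$ (equivalently, on $\supp_{\Cchii}$) is bounded by $\cepsilon / |M|$ by the $\Cc_2$ hypothesis. Fourier inversion on $\supp_{\Cchii}$, together with the crude estimate $|\supp_{\Cchii}| \leq |M|$, then yields
\[
\linf{\sum_i \Cchi_i \xi_i} \leq \frac{1}{|\supp_{\Cchii}|}\Bigl(1 + (|\supp_{\Cchii}| - 1)\tfrac{\cepsilon}{|M|}\Bigr) \leq \frac{1 + \cepsilon}{|\supp_{\Cchii}|},
\]
which is the desired bound. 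The only nontrivial ingredient is the equality case of the triangle inequality used in the middle paragraph; everything else is bookkeeping with characters.
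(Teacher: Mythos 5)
Your argument is correct and is essentially the paper's proof in dual language: your subgroup $H^{\perp}$ coincides with the smallest additive subgroup containing the support of $\sum_i m_i\xi_i$ (the paper's $\pi_{\{m_i\}}$), and your coset-indexed Fourier inversion on it is exactly the paper's combination of character extension (injectivity of $\Q/\Z$) with the $\epsilon$-equidistribution bound of \autoref{lem:equidist}. There is no gap; your equality-in-the-triangle-inequality step for the first bullet is just a more explicit version of the paper's observation that, since $0$ lies in the support of each $\xi_i$, the support of each $m_i\xi_i$ is contained in that of the sum.
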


\begin{proof}
Indeed, if a non-trivial Fourier coefficient is equal to $1$, then the
random variable:
\begin{equation}
\label{eqn:randomvariable}
\sum_i \cxi
\end{equation}
is supported on the kernel of the corresponding homomorphism to
$\mathbb{C}^{*}$. Therefore, the support of (\ref{eqn:randomvariable})
is contained in a proper additive subgroup of $\Mm$. Denote
by $\supp_{\Cchii}$ the smallest additive subgroup that contains the
support of (\ref{eqn:randomvariable}).

\begin{claim}
The Fourier coefficients of (\ref{eqn:randomvariable}), regarded as a
random variable valued in $\supp_{\Cchii}$ are a subset of the Fourier
coefficients of (\ref{eqn:randomvariable}), regarded as a random
variable valued in $M$.
\end{claim}

The claim follows from the fact that $\Q/\Z$ is injective in the
category of abelian groups. Therefore, any homomorphism in
$Hom(\supp,\Com^{*}) \cong Hom(\supp_{\Cchi}, \Q/\Z)$ can be extended to
a homomorphism in $Hom(M,\Com^{*}) \cong Hom(M,\Q/\Z)$.
\newline
\newline

It follows from the claim that all the non-trivial Fourier
coefficients of (\ref{eqn:randomvariable}), regarded as a random
variable valued in $\supp_{\Cchii}$, are either equal to $1$ or bounded
above by $\ssfrac{\cepsilon}{|M|}$. But by the minimality of
$\supp_{\Cchii}$, none of the non-trivial Fourier coefficients can be
equal to $1$. It follows that the random variable (\ref{eqn:randomvariable}) is
$\cepsilon$-equidistributed on $\supp_{\Cchii}$. Therefore, by \autoref{lem:equidist}
\[
\linf{
\sum_i \cxi
}
\leq
\frac{1+\cepsilon}{|\supp_{\Cchii}|}
\]
\end{proof}

\begin{corollary}

\begin{equation}
\label{eqn:corollaryequation}
\sum_{\Cchii \in \Cc_2} \linf{ \sum_i \cxi }^{\cols} \leq
\sum_{\Cchii \in \Cc_2} \left(
\frac{1+\epsilon}{|\supp_{\Cchii}|}
\right)^{\cols}
\end{equation}

\end{corollary}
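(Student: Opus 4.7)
The plan is direct: the corollary is an immediate consequence of the lemma proven just above, obtained by raising the pointwise bound to the $\cols$-th power and summing termwise.

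Concretely, for each $\Cchii \in \Cc_2$ the lemma furnishes a proper additive subgroup $\supp_{\Cchii} \subset M$ together with the estimate
\[
\linf{\sum_i \cxi} \leq \frac{1+\cepsilon}{|\supp_{\Cchii}|}.
\]
Since both sides are non-negative real numbers, exponentiating by $\cols$ preserves the inequality, and summing over $\Cchii \in \Cc_2$ yields exactly the bound asserted in the corollary.

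I do not expect any technical obstacle at this step --- all the substantive content has already been absorbed into the preceding lemma, which uses injectivity of $\Q/\Z$ in the category of abelian groups to extend characters on $\supp_{\Cchii}$ back up to characters on $M$, so that the $\cepsilon$-equidistribution estimate of \autoref{lem:equidist} can be applied on the smaller subgroup. The genuinely delicate work still remaining in this subsection is to estimate the right-hand side of the corollary and thereby deduce \autoref{thm:ctwo}: this will require counting, for each proper additive subgroup $\supp \subset M$, how many spanning tuples $\Cchii \in \Cc_2$ have $\supp_{\Cchii} = \supp$, and then controlling the weighted sum $\sum_{\supp} (\text{count}) \cdot (1+\cepsilon)^{\cols}/|\supp|^{\cols}$. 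The appearance of $char(R/\mathfrak{m})$ in the target bound of \autoref{thm:ctwo} suggests that the dominant contribution will come from subgroups whose index in $M$ is controlled by $char(R/\mathfrak{m})$.
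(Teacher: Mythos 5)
Your proof is correct and is exactly the argument the paper intends (the paper gives no explicit proof of the corollary precisely because it is immediate from the preceding lemma in the way you describe): both sides of the lemma's $l^\infty$ bound are nonnegative, so raising to the $\cols$-th power preserves the inequality, and summing over $\Cchii \in \Cc_2$ gives (\ref{eqn:corollaryequation}). The additional commentary about the remaining work toward \autoref{thm:ctwo} is accurate but belongs to the subsequent steps rather than to this corollary.
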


The right hand side of (\ref{eqn:corollaryequation}) is bounded above by:
\newcommand{\ctwoexpressionone}[1]{
\sum_{
\substack{
\supp \subset M
\\
\supp \neq M
}
}
\left(
\frac{1+\cepsilon}{|\supp|}
\right)^{\cols}
\#\left\{
#1
support\left(
\sum_i \cxi
\right) \in \supp
\right\}
}

\[
\ctwoexpressionone{\Cchii \subset \Cc_2 \Big| }
\leq
\]
\[
\leq
\ctwoexpressionone{\Cchii \Big| span\{ \Cchi_i\}=\Mm; }
\]

\begin{lemma}
Suppose that the $\rows$-tuple $\Cchii$ spans $M$, and
\[
support\left(
\sum_i \cxi
\right) \in \supp .
\]
If $\supp \neq M$, then $\supp$ is not an $R$-module.
\end{lemma}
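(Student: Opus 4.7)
The plan is to argue by contraposition: assume $\pi$ is an $R$-submodule of $M$ and deduce $\pi = M$. The entire argument hinges on the normalization (\ref{eqn:zeroone}), which guarantees that both $0$ and $1$ lie in the support of each $\xi_i$.

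First I would exploit independence together with (\ref{eqn:zeroone}) to produce concrete elements of the support of $\sum_j m_j \xi_j$. Specifically, for each fixed index $i$, the configuration $\xi_i = 1$ and $\xi_j = 0$ for $j \neq i$ has strictly positive probability, since the $\xi_j$ are independent and $0,1$ lie in the support of each. Evaluating the sum at this configuration gives the value $m_i$. Therefore $m_i$ lies in the support of $\sum_j m_j \xi_j$, and hence in $\pi$, for every $i = 1, \ldots, n$.

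Next I would invoke the assumption. If $\pi$ is an $R$-submodule of $M$, then $\pi$ contains the $R$-submodule generated by all the $m_i$; but by hypothesis $\{m_i\}$ spans $M$ as an $R$-module, so this $R$-span is all of $M$. Consequently $\pi \supseteq M$, forcing $\pi = M$, which contradicts the assumption $\pi \neq M$. Therefore $\pi$ cannot be an $R$-submodule.

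There is no serious obstacle here: the only place the hypotheses on the distribution of $\xi$ enter is through the reduction (\ref{eqn:zeroone}), and the only place the spanning hypothesis enters is in the very last step. It is precisely this contrast — that $\pi$ is forced to be closed under addition (by virtue of being the \emph{additive} support group) but is \emph{not} forced to be closed under scaling by $R$ — that justifies the earlier passage from the full module $M$ to the strictly smaller group $\pi_{\{m_i\}}$ in the $\mathcal{C}_2$ analysis.
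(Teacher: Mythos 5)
Your proof is correct and follows essentially the same route as the paper's: setting $\xi_i=1$ and $\xi_j=0$ for $j\neq i$ (possible by the normalization that $0,1$ lie in the support) shows each $m_i$ lies in the support of $\sum_i m_i\xi_i$ and hence in $\supp$, and the spanning hypothesis then forces any $R$-submodule containing all the $m_i$ to be all of $M$. The extra detail you give about independence and positive-probability configurations is just a fleshed-out version of the paper's one-line observation.
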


\begin{proof}
Because the support of $\xi_i$ contains $0$ and $1$, it follows that
the support of
\[
\sum_i \cxi
\]
contains $\Cchi_i$ for all $i$. It follows that $\Cchi_i \in \supp$.
The smallest $R$-module that contains $\Cchi_i$ for all $i$ is
$M$. Hence, if $\supp \neq M$, then $\supp$ is not an
$R$-module.
\end{proof}

\begin{lemma}
\label{lem:notamodule}
Suppose that $S$ is a subset of $R$ that contains $1$ and that is not
contained in any proper subring of $R$. Suppose that $\supp$ is any
additive subgroup of $M$. Then,
\[
\left\{ m \in M  \Big| mS \in \supp   \right\} \in \supp
\]
where equality holds if and only if $\supp$ is an $R$-module.
\end{lemma}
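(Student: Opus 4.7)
The plan is a direct two-part verification, interpreting both occurrences of ``$\in$'' in the statement as ``$\subseteq$'', since $mS$ and the defined set on the left are both subsets of $M$.

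First, I would establish the containment. Because $1 \in S$, any $m$ with $mS \subseteq \supp$ in particular satisfies $m = m \cdot 1 \in \supp$. This immediately gives $\{m \in M : mS \subseteq \supp\} \subseteq \supp$, independently of any module structure hypothesis on $\supp$.

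Next, I would handle the characterization of equality. The ``if'' direction is straightforward: when $\supp$ is an $R$-module, every $m \in \supp$ and every $s \in S \subseteq R$ satisfy $ms \in \supp$, so $\supp$ is contained in the left-hand set. For the ``only if'' direction, the key move is to introduce the stabilizer
\[
A := \{r \in R : r\supp \subseteq \supp\}
\]
and verify that $A$ is a subring of $R$. Clearly $1 \in A$; closure under addition uses that $\supp$ is an additive subgroup, namely $(r_1 + r_2)\supp \subseteq r_1 \supp + r_2 \supp \subseteq \supp$; and closure under multiplication is immediate, since $(r_1 r_2)\supp = r_1(r_2 \supp) \subseteq r_1 \supp \subseteq \supp$. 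Assuming the two sets in the statement coincide, for each $m \in \supp$ and each $s \in S$ the product $ms$ lies in $\supp$, so $S \subseteq A$. The hypothesis that $S$ is not contained in any proper subring of $R$ then forces $A = R$, which is precisely the statement that $\supp$ is an $R$-module.

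I do not foresee any real obstacle: the whole argument is a short bookkeeping exercise that converts the hypothesis on $S$ into module-closure via the stabilizer subring $A$. The only point requiring a moment of care is interpreting the set-theoretic notation in the statement, after which the implication $S \subseteq A \Rightarrow A = R$ does all the work.
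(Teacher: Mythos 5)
Your proposal is correct and follows essentially the same route as the paper: both establish the containment from $1\in S$, dispose of the ``if'' direction by the definition of an $R$-module, and for the ``only if'' direction pass to the stabilizer $\{r\in R: r\supp\subseteq\supp\}$, observe it is a subring containing $S$, and invoke the hypothesis on $S$ to conclude the stabilizer is all of $R$. The only difference is that you spell out the verification that the stabilizer is a subring, which the paper takes as known.
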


\begin{proof}
The inclusion holds because $1 \in S$. If $\supp$ is an $R$-module,
then we have equality. Conversely, suppose that we have equality. Then,
\[
\supp S = \supp.
\]
Therefore, $S$ is contained in the stabilizer of $\supp$, which is a
subring of $R$. Since $S$ is not contained in any proper subring of
$R$, the stabilizer of $\supp$ must be $R$.
\end{proof}

\newcommand{\chisuppset}[1]{
\left\{ #1 \Big|
support(\cxi) \in \supp
\right\}
}

\begin{corollary}
Suppose that $\supp$ is not an $R$-module. Then,

\[
\#
\chisuppset{\Cchii}
\leq
\Big( \frac{|\supp|}{char(R/\mathfrak{m})} \Big)^{\rows}
\]
\end{corollary}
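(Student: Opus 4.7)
The plan is to show that the condition $\mathrm{support}(\sum_i m_i \xi_i) \subset \pi$ decouples, essentially, into $n$ independent conditions of the form $m_i S \subset \pi$, where $S = \mathrm{support}(\xi)$. Then I will bound the size of the relevant subgroup of $M$ using Lemma \ref{lem:notamodule}, combined with the observation that $M$ is a $p$-group.

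First I would note that since the $\xi_i$ are i.i.d.\ with common support $S$ and are independent, the support of $\sum_i m_i \xi_i$ is the Minkowski sum $\sum_i m_i S = \{\sum_i m_i s_i : s_i \in S\}$. Because $0 \in S$ by the normalization (\ref{eqn:zeroone}), setting $s_j=0$ for $j \neq i$ shows that $\mathrm{support}(\sum_i m_i \xi_i) \subset \pi$ forces $m_i S \subset \pi$ for every $i$. Conversely, if each $m_i S \subset \pi$, then every element of $\sum_i m_i S$ is a sum of elements of the additive subgroup $\pi$, hence lies in $\pi$. Therefore
\[
\#\chisuppset{\Cchii} = \Big| \{ m \in M : m S \subset \supp \} \Big|^{\rows}.
\]

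Next, Lemma \ref{lem:notamodule} applies with this $S$ (which contains $1$ by (\ref{eqn:zeroone}) and is not contained in any proper subring by hypothesis): the set $H := \{m \in M : mS \subset \pi\}$ is an additive subgroup of $\pi$, and is a \emph{proper} subgroup since $\pi$ is not an $R$-module. So the index $[\pi : H] \geq 2$, but I need more: I want $[\pi : H] \geq \mathrm{char}(R/\mathfrak{m}) = p$.

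The final ingredient is that $|M|$ is a power of $p$. Since $R$ is a finite local ring with residue field $R/\mathfrak{m}$ of characteristic $p$, one has $|R| = |R/\mathfrak{m}|^{\ell}$ for some $\ell$ (by taking a composition series of $R$ as an $R$-module, every factor being a copy of $R/\mathfrak{m}$), and hence $|R|$ is a power of $p$. Any finite $R$-module $M$ then has order a power of $p$, and so does any additive subgroup $\pi \subset M$. Since $H$ is a proper subgroup of the $p$-group $\pi$, the index $[\pi : H]$ is a non-trivial power of $p$ and in particular is at least $p = \mathrm{char}(R/\mathfrak{m})$. Combining,
\[
|H| \leq \frac{|\supp|}{\mathrm{char}(R/\mathfrak{m})},
\]
and raising to the $n$-th power gives the corollary. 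The only point that requires care is the reduction in the first step, which relies essentially on $0 \in S$; apart from that, the argument is purely a bookkeeping of Lemma \ref{lem:notamodule} together with the $p$-group structure on $M$.
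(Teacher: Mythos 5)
Your proof is correct and follows the same route as the paper's: reduce to a per-coordinate count via $0 \in S$, apply Lemma \ref{lem:notamodule} to see that $\{m \in M : mS \subset \supp\}$ is a proper additive subgroup of $\supp$, and then use that $M$ (hence $\supp$) is a $p$-group to conclude the index is at least $\mathrm{char}(R/\mathfrak{m})$. You simply fill in details that the paper leaves implicit (the Minkowski-sum reduction, the fact that $H$ is a subgroup, and why $M$ is a $p$-group).
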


\begin{proof}
Indeed,
\[
\#\chisuppset{\Cchii} \leq
\#\left\{
m \in \Mm
\Big|
m \xi \in \supp
\right\}^{\rows}
\]
By \autoref{lem:notamodule}, and because $\supp$ is not an $R$-module,
\begin{equation}
\label{eqn:pisubgroup}
\left\{
m \in \Mm \Big|
m \xi \in \supp
\right\}
\end{equation}

is a proper subgroup of $\supp$. If $p=char(R/\mathfrak{m})$,
$\Mm$ is a $p$-group and hence $\supp$ is a $p$-group,
Therefore, (\ref{eqn:pisubgroup}) is bounded above by $|\supp|/p$ and the result follows.
\end{proof}

\paragraph{Proof of \autoref{thm:ctwo}}
We combine the estimates above to get:
\[
\sum_{\Cchi \in \Cc_2} \linf{\cxi}^{\cols}
\leq
\]
\[
\leq
\sum_{
\substack{
\supp \subset M\\
\supp \notin \mathbf{R-mod}
}
}
\#
\chisuppset{\Cchii}
\left(
\frac{1+\cepsilon}{|\supp|}
\right)^{\cols}
\leq
\]
\[
\leq
\sum_{
\substack{
\supp \subset M\\
\supp \notin \mathbf{R-mod}
}
}
\left(
\frac{|\supp|}{|char(R/\mathfrak{m})|}
\right)^{\rows}
\left(
\frac{1+\cepsilon}{|\supp|}
\right)^{\cols} \leqsim
\]
\[
\leqsim \left(
\frac{1+\epsilon}{char(R/\mathfrak{m})}
\right)^{\cols}
\]
where the implied constant depends only on $M$ and on $u$.


\subsection{Sum over $\Cc_3$}
\label{sec:cthree}

Recall that we say that $\{ \Cchi_i\}$\ belongs to $\Cc_3$ if \textit{ at
least one}
Fourier coefficient of the random variable
\[
\sum_i \Cchi_i \xi_i
\]
has absolute value smaller than $1$ but larger than $\cepsilon /|\Mm|$.

\begin{theorem}
\label{thm:maincthreetheorem}
\[
\sum_{\Cchi \in \Cc_3} \linf{ \sum_i \cxi }^{\cols} \leqsim
(1+\epsilon)^{2\rows + u} (1-\beta)^{\cols}
\]
where the implied constant depends on $u$, $\epsilon$, $M$, and the
distribution of \[(\xi \mod \ann{M}).\]
\end{theorem}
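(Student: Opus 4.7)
The plan is to exploit the defining feature of $\Cc_3$: for each $\Cchii \in \Cc_3$ there is a non-trivial character $\chi$ of $M$ with $\cepsilon/|M| < |\probE[\chi(\sum_i \cxi)]| < 1$. Setting $K_\chi = \{m \in M : \chi(m\xi) = 1 \text{ almost surely}\}$, a proper additive subgroup of $M$ (using that $0,1\in\mathrm{supp}(\xi)$), partition the indices $\{1,\dots,\rows\}$ into $S = \{i : m_i \in K_\chi\}$ and its complement $T$. For each $i \in T$ the factor $|\probE[\chi(\cxi)]|$ is bounded away from $1$ by a constant $c = c(M,\alpha)$: the values of $\chi$ are $|M|$-th roots of unity, so any two distinct values differ by at least $c_1/|M|$, and the minimal non-zero mass $\alpha$ of $\xi$ forces a defect of at least $c_1^2 \alpha^2/|M|^2$. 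Combined with $\prod_{i \in T}|\probE[\chi(\cxi)]| > \cepsilon/|M|$, this constrains $|T|$ to be at most a constant $t_0 = t_0(M,\cepsilon,\alpha)$.

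Next, write $\sum_i \cxi = \zeta_S + \zeta_T$ with $\zeta_S = \sum_{i \in S} \cxi$. For $i \in S$ we have $m_i \xi_i \in ker(\chi)$ almost surely, so $\zeta_S$ is supported in the proper subgroup $ker(\chi) \subsetneq M$, and $\probE[\chi(\zeta_S)]=1$. This places $\{m_i\}_{i \in S}$ in a $\Cc_2$-like configuration (at least one non-trivial Fourier coefficient equal to $1$); running the argument of \autoref{thm:ctwo} on $\zeta_S$ produces a proper additive subgroup $\supp_S \subseteq ker(\chi)$ containing $\mathrm{supp}(\zeta_S)$ and on which $\zeta_S$ is $\cepsilon$-equidistributed, so that $\linf{\zeta_S} \leq (1+\cepsilon)/|\supp_S|$. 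Since convolving with the probability measure of $\zeta_T$ cannot increase $l^\infty$, $\linf{\sum_i \cxi} \leq (1+\cepsilon)/|\supp_S|$. Enumerating $\Cchii$ by the auxiliary data $(\chi, T, \{m_i\}_{i \in T}, \supp_S)$ contributes only polynomially in $\rows$ (at most $|M|\cdot\binom{\rows}{t_0}\cdot|M|^{t_0}\cdot 2^{|M|}$), absorbable into $(1+\cepsilon)^{\rows}$. Given this data, the spanning hypothesis $\mathrm{span}(\Cchii) = M$ together with $\supp_S \subsetneq M$ forces $\supp_S$ to fail to be an $R$-submodule, so \autoref{lem:notamodule} bounds the number of admissible $m_i$ (those with $m_i \xi \in \supp_S$ a.s.) by $|\supp_S|/\mathrm{char}(R/\mathfrak{m}) \leq |\supp_S|(1-\beta)$. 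Combining these estimates with $\linf{\sum_i \cxi}^{\cols}$ and summing over the finite set of possible $\supp_S$ yields
\[
\sum_{\Cchii \in \Cc_3} \linf{\sum_i \cxi}^{\cols} \leqsim (1+\cepsilon)^{2\rows+u}(1-\beta)^{\cols},
\]
after absorbing the residual $|\supp_S|^{-u}$ factor into the constant (bounded because $\supp_S$ ranges over a finite set depending only on $M$).

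The main technical obstacle is the second step: applying the Type $2$ argument of \autoref{thm:ctwo} cleanly to $\zeta_S$ even though $\{m_i\}_{i \in S}$ need not span $M$, and making sure the resulting subgroup $\supp_S$ is not an $R$-submodule so that the key $1/\mathrm{char}(R/\mathfrak{m})$ savings underlying the $(1-\beta)$ factor are actually available. The interaction between the witnessing character $\chi$, the global spanning condition on $\Cchii$, and the subgroup structure of $\supp_S$ must be handled carefully to recover the stated exponents; if $\{m_i\}_{i\in S}$ itself falls into $\Cc_3$, the argument may need to be iterated inside $ker(\chi)$, but the process terminates after at most $\log_2|M|$ steps since the ambient module strictly shrinks at each stage.
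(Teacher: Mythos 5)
Your opening moves---isolate a witnessing ``large'' character $\chi$, set $K_\chi$, split the indices into a small bad set $T$ and the complementary set $S$, bound $|T|$, and then enumerate $\Cchii$ by first fixing $(\chi,T,\{m_i\}_{i\in T},\supp_S)$ and then counting admissible $m_i$ for $i\in S$---run closely parallel to the paper's iterated index-removal that produces $\mathcal{I}$ and the subgroup $\supp$ in \autoref{thm:auxcthree}, and the iteration caveat you add at the end is exactly what that lemma encodes. The genuine gap is the assertion that the spanning hypothesis $\mathrm{span}(\Cchii)=M$ together with $\supp_S\subsetneq M$ forces $\supp_S$ not to be an $R$-submodule. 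That would require $\{m_i\}_{i\in S}$ to span $M$, which it need not: only the full tuple spans, and the bounded set $T$ may carry the remaining generators. Concretely, take $R=\Z/p^2\Z$, $M=R$, $\mathrm{supp}(\xi)=\{0,1,p\}$, $m_1=\dots=m_{n-1}=p$ and $m_n=1$; then the witnessing $\chi$ has kernel $pR$, $S=\{1,\dots,n-1\}$, and $\supp_S=pR$, a proper $R$-submodule. So the $R$-module case is genuinely live.

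Once $\supp_S$ is allowed to be an $R$-submodule, \autoref{lem:notamodule} yields no $1/\mathrm{char}(R/\mathfrak{m})$ savings, and because you discard the contribution of $\zeta_T$ (using only that convolution cannot increase $l^\infty$, so $\linf{\sum_i\cxi}\leq(1+\cepsilon)/|\supp_S|$ with no $(1-\beta)$ factor), the combined bound degenerates to roughly $(1+\cepsilon)^{2n+u}|\supp_S|^{-u}$, which has no exponential decay. The missing ingredient, which the paper isolates as \autoref{lem:betaepsilon}, is that the bad indices $i\in T$ must also contribute: for such $i$, $\cxi\bmod\supp_S$ is non-degenerate, and when $\supp_S$ is an $R$-module one gets $\linf{\cxi\bmod\supp_S}\leq\linf{\xi\bmod\mathfrak{m}}\leq 1-\beta$; combining this with the $\cepsilon$-equidistribution of $\zeta_S$ on $\supp_S$ via \autoref{thm:equidist} produces $\linf{\sum_i\cxi}\leq(1+\cepsilon)(1-\beta)/|\supp_S|$, which is precisely what closes the $R$-module case. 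Your framework already singles out the bad indices, so this fits in naturally, but it is not optional: without it the argument fails on a whole family of $\Cchii\in\Cc_3$.
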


\subsubsection{Preliminary Theorem}

This section will be devoted to the proof of the auxiliary
\autoref{thm:auxcthree}. Before stating \autoref{thm:auxcthree}, we
need a lemma.

\begin{lemma}
There exists a positive integer $T$, depending only on $\epsilon$,
$\Mm$ and the distribution of $(\xi \mod \ann{M})$,  such that for any $m
\in \Mm$, any Fourier coefficient of $m \xi$ is either equal to $1$ or
has absolute value less than:
\[
\left( \frac{\cepsilon}{|\Mm|} \right)^{1/T}
\]
\end{lemma}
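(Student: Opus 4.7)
The plan is to exploit the finiteness of the parameter space: the Fourier coefficients in question are indexed by a pair $(m,\chi)$ with $m\in M$ and $\chi$ a character of $M$, and only finitely many such pairs exist because $M$ is finite. Since $m\cdot \text{ann}(M)=0$, the random variable $m\xi$ (and therefore every Fourier coefficient $\mathbb{E}[\chi(m\xi)]$) depends only on $m$, $\chi$, and the distribution of $(\xi \bmod \text{ann}(M))$. Hence the set
\[
S \;=\; \bigl\{\,|\mathbb{E}[\chi(m\xi)]| \;:\; m\in M,\ \chi\in\text{Hom}(M,\mathbb{C}^{*})\,\bigr\}
\]
is a \emph{finite} subset of $[0,1]$, whose elements depend only on $M$ and the distribution of $(\xi \bmod \text{ann}(M))$.

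First I would separate $S$ into the values that are equal to $1$ and those that are strictly less than $1$. For the latter subset, the maximum
\[
\rho \;\defeq\; \max\bigl\{\,s \in S \;:\; s<1\,\bigr\}
\]
exists and satisfies $\rho<1$ (if the set is empty, the lemma is trivial with $T=1$). Crucially, $\rho$ depends only on $M$ and the distribution of $(\xi \bmod \text{ann}(M))$, since $S$ does. Now I would choose any integer $T$ large enough that $\rho^{T}<\epsilon/|M|$; explicitly,
\[
T \;>\; \frac{\log(\epsilon/|M|)}{\log\rho},
\]
where both numerator and denominator are negative. Such a $T$ depends only on $\epsilon$, $M$, and the distribution of $(\xi \bmod \text{ann}(M))$, exactly as required.

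With this $T$ chosen, consider any Fourier coefficient $z=\mathbb{E}[\chi(m\xi)]$ of $m\xi$. If $|z|=1$, then since the support of $\xi$ contains $0$ (by the normalization (\ref{eqn:zeroone})), $\chi(m\cdot 0)=1$ is attained, forcing $z=1$ itself. Otherwise $|z|\le\rho$, and then $|z|^{T}\le \rho^{T}<\epsilon/|M|$, which yields
\[
|z| \;<\; \left(\frac{\epsilon}{|M|}\right)^{1/T},
\]
as desired. The only point that requires a small amount of care is the passage from $|z|=1$ to $z=1$, which relies on the earlier normalization that $0$ lies in the support of $\xi$; everything else is a straightforward compactness-type argument made trivial by finiteness.
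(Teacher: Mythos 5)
Your proof is correct and follows essentially the same route as the paper: both arguments rest on the observation that the Fourier coefficients of $m\xi$ form a finite set determined by $M$ and the distribution of $(\xi \bmod \operatorname{ann}(M))$, so those not equal to $1$ are bounded away from $1$, and one then chooses $T$ accordingly. Your write-up is merely a more explicit version (spelling out the maximum $\rho$, the choice of $T$, and the $|z|=1\Rightarrow z=1$ step that the paper defers to an earlier remark about $0$ being in the support of $\xi$).
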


\begin{proof}
To prove the lemma, it is sufficient to show that every Fourier coefficient of
$m \xi$ is either $1$ or bounded above in absolute value by some
absolute constant, say $\mathbf{C}$, that depends only on the distribution of $(\xi \mod \ann{M})$. But every Fourier coefficient of $m \xi$ occurs as a Fourier coefficient of $(\xi \mod \ann{M})$. The set of Fourier coefficients of $(\xi \mod \ann{M})$ is a finite set that depends only on the
distribution of $(\xi \mod \ann{M})$. Hence, the result follows.

\end{proof}

\newcommand{\extraneousproof}
{
\begin{proof}
Let $\chi \in Hom(\Mm, \Com)$. Let $N$ be the smallest positive
integer that annihilates $M$. Then $\chi$ factors through $Hom(\Mm,
\Z/N\Z)$. Therefore
\begin{equation}
\label{eqn:chimxi}
\chi (m \xi)
\end{equation}
is a random variable valued in the $N^{th}$ roots of unity.
Furthermore, by the definition of $\beta$, either (\ref{eqn:chimxi})
is concentrated at a point, or  (\ref{eqn:chimxi}) takes at least two
different values with probability at least $\beta$.

\begin{sublemma}
\label{sublem:betabound}
Suppose that $\zeta$ is a random variable valued in the $N^{th}$ roots
of unity that takes at least two distinct values with probability at
least $\beta$. Then there exists a constant $C(N,\beta)<1$, depending
only on $N$ and $\beta$, such that
\begin{equation}
|\probE (\zeta)| \leq C(N,\beta)
\end{equation}
\end{sublemma}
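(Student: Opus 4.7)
The plan is to establish this sublemma by a soft compactness argument, followed (if needed) by a more explicit quantitative bound.

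First I would fix $N$ and $\beta$ and consider the space $\Delta_N$ of all probability distributions on the set $\mu_N$ of $N^{th}$ roots of unity. Since $\mu_N$ is finite, $\Delta_N$ is a compact simplex in $\Real^N$. Let $K(N,\beta) \subset \Delta_N$ be the subset consisting of those $\zeta$ that satisfy the hypothesis, namely that the mass of $\zeta$ at any single root of unity does not exceed $1-\beta$ (so that with probability at least $\beta$, $\zeta$ differs from any fixed value, hence takes at least two distinct values with probability at least $\beta$). This condition is described by finitely many closed inequalities $\probP(\zeta = \omega) \leq 1-\beta$ for $\omega \in \mu_N$, so $K(N,\beta)$ is a closed, hence compact, subset of $\Delta_N$.

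Next I would invoke the strict convexity of the closed unit disk in $\Com$. Since the support of $\zeta$ lies on the unit circle, $|\probE(\zeta)| = 1$ can only occur when $\zeta$ is concentrated at a single point of $\mu_N$. But such a point mass has $l^\infty$ norm $1$, and hence violates the hypothesis defining $K(N,\beta)$. Consequently $|\probE(\zeta)| < 1$ for every $\zeta \in K(N,\beta)$. The function $\zeta \mapsto |\probE(\zeta)|$ is continuous on $\Delta_N$, so by compactness it attains its maximum on $K(N,\beta)$, and this maximum is some constant $C(N,\beta) < 1$ depending only on $N$ and $\beta$. This finishes the proof.

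If an explicit constant is preferred, I would replace the compactness step by the following direct calculation. Choose two distinct values $\omega_1, \omega_2 \in \mu_N$ with $\probP(\zeta = \omega_j) \geq \beta'$ for $j=1,2$, where $\beta'$ is extracted from the hypothesis (e.g.\ $\beta' = \beta/N$ by pigeonhole applied to the mass not concentrated at the mode). Writing $\probE(\zeta) = \beta'\omega_1 + \beta'\omega_2 + (1-2\beta')\cdot \mu$ where $\mu$ lies in the closed unit disk, and using that $|\omega_1 + \omega_2| \leq 2\cos(\pi/N)$ since $\omega_1 \neq \omega_2$ are $N^{th}$ roots of unity, one obtains
\[
|\probE(\zeta)| \leq 2\beta'\cos(\pi/N) + (1-2\beta') = 1 - 2\beta'\bigl(1-\cos(\pi/N)\bigr),
\]
which is strictly less than $1$ for every $N \geq 2$. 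The main (and really only) conceptual point is to convert the qualitative hypothesis into a quantitative lower bound $\beta'$ on the mass at two fixed distinct roots of unity; this is where one must be careful about the precise meaning of the phrase \emph{takes at least two different values with probability at least $\beta$}, but either interpretation yields such a $\beta'$ depending only on $N$ and $\beta$.
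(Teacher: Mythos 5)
Your proposal uses the same compactness argument as the paper, but it is more complete: the paper simply asserts that the set of admissible distributions is compact and that the supremum of $|\probE(\zeta)|$ cannot equal $1$, whereas you explicitly identify the compact set by closed inequalities $\probP(\zeta = \omega) \leq 1-\beta$ and invoke strict convexity of the unit disk to justify that $|\probE(\zeta)| < 1$ off the vertices. The explicit bound $|\probE(\zeta)| \leq 1 - 2\beta'\bigl(1 - \cos(\pi/N)\bigr)$ at the end is a correct and useful addition not present in the paper.
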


\autoref{sublem:betabound} is true by a compactness argument. Indeed,
the set of all probability distributions with the required property is
a compact set. Hence, every continuous function attains it supremum.
Therefore, there exists a random variable with the required property
such that $|\probE(\zeta)|$ is maximal. This maximal value cannot be
equal to $1$ and it depends only on $N$ and $\beta$. This value is
hence the required $C(N,\beta)$.

The set of all probability distributions on the $N^{th}$ roots of
unity that take at least two distinct values with probability $\beta$,
is a compact set. The absolute value of the expectation is a
continuous function on this set; therefore the supremum is attained
for some distribution. Since this supremum is not equal to $1$, it
must be strictly smaller than $1$.

\end{proof}
}

\begin{definition}
Define $\alpha$ to be the smallest non-zero value of $\probP( \xi
\equiv r \mod \text{ann} \, M)$ as $r$ ranges over $R \mod
\text{ann}\, M$.
\end{definition}

\begin{remark}
Although, this is not very important for the proof, we remark that we can compute an explicit upper bound on $\mathbf{C}$, from $\alpha$ and from the minimal value of $e$ such that $p^e M =0$.
\end{remark}

\begin{theorem}
\label{thm:auxcthree}
Given $\{ \Cchi_i \} \in \Cc_3$, there exists an additive subgroup $\supp \in M$
such that
\begin{itemize}
\item We have the inequality:
\[
\linf{
\sum_i \cxi
}
\leq
\frac{(1-\alpha)(1+\cepsilon)}{|\supp|}.
\]
\item When $\supp$ is an $R$-module, we have the stronger inequality:
\[
\linf{
\sum_i \cxi
}
\leq
\frac{(1-\beta)(1+\cepsilon)}{|\supp|}.
\]
\item $\Cchi_i \in \supp$ for all but $T|\Mm|$ indices $i$.
\end{itemize}
$\supp$ is not necessarily unique.
\end{theorem}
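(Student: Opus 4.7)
The approach leverages the preliminary lemma stating that every Fourier coefficient of $m\xi$ is either equal to $1$ in absolute value or bounded above by $(\cepsilon/|\Mm|)^{1/T}$. Combined with the $\Cc_3$ hypothesis, this should force most of the factors in the product
\[
|\widehat{X}(\chi)| = \prod_i |\widehat{\cxi}(\chi)|,
\]
where $X := \sum_i \cxi$, to equal $1$, rigidly constraining most of the $\Cchi_i$ to lie inside a proper additive subgroup of $\Mm$.

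First I would collect the set $S$ of characters $\chi \in \Hom(\Mm, \Com^*)$ satisfying $\cepsilon/|\Mm| < |\widehat{X}(\chi)| < 1$; the $\Cc_3$ hypothesis guarantees $S \neq \emptyset$. For each $\chi \in S$, the preliminary lemma combined with the factorization above forces at most $T$ of the factors $|\widehat{\cxi}(\chi)|$ to be strictly less than $1$, since otherwise the product would drop below $\cepsilon/|\Mm|$. This produces a set $I_\chi \subseteq \{1,\dots,n\}$ of size at most $T$. I would then set $\supp := \bigcap_{\chi \in S} \ker \chi$ and $I_{\mathrm{bad}} := \bigcup_{\chi \in S} I_\chi$; since $|S| \leq |\Mm|$, we have $|I_{\mathrm{bad}}| \leq T|\Mm|$. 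For any good $i \notin I_{\mathrm{bad}}$ and any $\chi \in S$, the condition $|\widehat{\cxi}(\chi)| = 1$ forces $\chi(\cxi)$ to be almost surely constant; since $0 \in \mathrm{supp}(\xi)$ this constant must be $\chi(0)=1$, and since $1 \in \mathrm{supp}(\xi)$ by (\ref{eqn:zeroone}) we obtain $\chi(\Cchi_i) = 1$, i.e.\ $\Cchi_i \in \supp$. This establishes the third bullet.

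To obtain the $\ell^\infty$ bounds, I would decompose $X = X_{\mathrm{good}} + X_{\mathrm{bad}}$ along $I_{\mathrm{good}} \sqcup I_{\mathrm{bad}}$. Because $\Cchi_i \in \supp$ for every good $i$, the summand $X_{\mathrm{good}}$ takes values in $\supp$, and a direct Fourier computation shows it is $\cepsilon$-equidistributed on $\supp$ (non-trivial Fourier coefficients corresponding to $\chi \in S$ are trivial on $\supp$, while those outside $S$ are bounded by $\cepsilon/|\Mm|$). Applying \autoref{thm:equidist} with $\zeta_2 = X_{\mathrm{good}}$ and $\zeta_1 = X_{\mathrm{bad}}$ then yields the preliminary bound $(1+\cepsilon)\,\probP(X_{\mathrm{bad}} \equiv g \bmod \supp)/|\supp|$. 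For the extra factor $(1-\alpha)$, I would note that for any $\chi \in S$ at least one bad-index factor $|\widehat{\cxi}(\chi)|$ is strictly below $1$; reducing $X_{\mathrm{bad}} \bmod \supp$ in $\Mm/\supp$ and using the definition of $\alpha$ together with the fact that $\chi(\cxi)$ attains at least two distinct values on $\mathrm{supp}(\xi)$ should bound the coset probability by $1-\alpha$. When $\supp$ is in addition an $R$-submodule, $\Mm/\supp$ carries an $R$-action and the relevant bad factor factors through $\xi \bmod \mathfrak{m}$, which upgrades the bound to $1-\beta$ by definition of $\beta$.

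The principal technical obstacle is the final quantitative step: converting the mere inequality $|\widehat{\cxi}(\chi)| < 1$ (or equivalently $P(X_{\mathrm{bad}} \equiv g \bmod \supp) < 1$) into the sharp bounds $1-\alpha$ and $1-\beta$. This requires a careful pigeonhole-type analysis of how the atoms of $\xi$ redistribute under the character $\chi$ on $\Mm/\supp$, parallel to the compactness argument used in the proof of the preliminary lemma that bounds the constant $\mathbf{C}$; and in the $R$-module case, it requires identifying exactly when the reduction of the bad factor is governed by $\xi \bmod \mathfrak{m}$ rather than by the coarser residue $\xi \bmod \ann{\Mm}$.
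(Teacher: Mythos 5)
Your construction of the exceptional index set and of $\supp$ gets the third bullet, but the key analytic step has a genuine gap. You define $S$ using the Fourier coefficients of the \emph{full} sum $X=\sum_i \cxi$ and then claim that $X_{\mathrm{good}}=\sum_{i\notin I_{\mathrm{bad}}}\cxi$ is $\cepsilon$-equidistributed on $\supp=\bigcap_{\chi\in S}\ker\chi$, arguing that characters in $S$ restrict trivially to $\supp$ while characters outside $S$ have coefficients bounded by $\cepsilon/|\Mm|$. The second half of that sentence is about $\widehat{X}(\chi)$, not about $\widehat{X_{\mathrm{good}}}(\chi)$, and the two need not be comparable: for $\chi\notin S$ with $|\widehat{X}(\chi)|\leq \cepsilon/|\Mm|$, the smallness of the product $\prod_i|\widehat{\cxi}(\chi)|$ may be caused entirely by the bad factors. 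After you delete them, fewer than $T$ good factors may be strictly below $1$, leaving $|\widehat{X_{\mathrm{good}}}(\chi)|$ at an intermediate value such as $(\cepsilon/|\Mm|)^{1/T}$; such a $\chi$ can perfectly well restrict nontrivially to $\supp$ (your $\supp$ may even be strictly larger than the subgroup generated by the good supports). So the $\cepsilon$-equidistribution of $X_{\mathrm{good}}$ on $\supp$ does not follow, and with it the application of \autoref{thm:equidist} that gives the first two bullets collapses. This is exactly why the paper does not work with the characters that are large for $X$ alone: it builds the index set \emph{iteratively}, at each step inspecting the Fourier coefficients of the current partial sum, removing the at most $T$ indices whose supports are not killed by a large coefficient of that partial sum, and terminating after at most $|\Mm|$ steps because the subgroup generated by the remaining support strictly shrinks. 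The iteration is what simultaneously yields the bound $T|\Mm|$ and the property that every Fourier coefficient of the surviving sum is either $1$ or small, which is the input your argument is missing; a one-pass intersection of kernels cannot be patched to recover it.

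By contrast, the step you single out as the principal obstacle --- upgrading strict inequality to the quantitative bounds $1-\alpha$ and $1-\beta$ --- is the easy part and needs no compactness or pigeonhole analysis: for a bad index $i$, each atom of $\cxi \bmod \supp$ has probability equal to a sum of atoms of $(\xi \bmod \ann{\Mm})$, hence is either $0$ or at least $\alpha$; since the support of $\cxi$ is not contained in $\supp$ (and $0$ is in the support), at least two atoms are nonzero, so $\linf{\cxi \bmod \supp}\leq 1-\alpha$, and then $\linf{\sum_i\cxi \bmod \supp}\leq 1-\alpha$ since the $l^\infty$ norm of an independent sum is at most that of any summand. When $\supp$ is an $R$-module one writes $\probP(\cxi\equiv m \bmod \supp)=\probP\big(\xi\equiv r \bmod (\supp:\Cchi_i)\big)$ with $(\supp:\Cchi_i)$ a proper ideal, hence contained in $\mathfrak{m}$, giving $\linf{\cxi\bmod\supp}\leq\linf{\xi\bmod\mathfrak{m}}\leq 1-\beta$. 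I recommend you keep your observation that at most $T$ factors per large character can be below $1$, but embed it in the iterative scheme above rather than applying it once to $X$.
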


\paragraph{Proof of \autoref{thm:auxcthree}} We will need a lemma:

\begin{lemma}
Given any $\Cchi \in Hom(R^{\rows}, M)$, there exists a set of indices
$\mathcal{I}$ of size at most $T |\Mm|$, such that
\begin{enumerate}
\item The Fourier coefficients of the random variable:
\begin{equation}
\label{eqn:sumrv}
\sum_{i \notin \mathcal{I} } \cxi
\end{equation}
are all either "\textit{small}" or equal to $1$.
\item Furthermore, there is an additive subgroup of $M$ that contains
\[
support(\cxi) \hspace{0.5in}\text{for all } i \notin \mathcal{I},
\]
but does not contain
\[
support(\cxi) \hspace{0.5in} \text{ for any $i \in \mathcal{I}$}.
\]
\end{enumerate}
\end{lemma}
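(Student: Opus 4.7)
For each character $\chi \in \widehat{\Mm}$ set
\[
S_\chi = \{\, i : \probE[\chi(\Cchi_i \xi_i)] \neq 1 \,\}.
\]
By independence the Fourier coefficient of $\sum_{i \notin \mathcal{I}} \Cchi_i \xi_i$ at $\chi$ equals $\prod_{i \in S_\chi \setminus \mathcal{I}} \probE[\chi(\Cchi_i \xi_i)]$. The preceding lemma bounds each factor in absolute value by $(\cepsilon/|\Mm|)^{1/T}$, so this coefficient is \textit{small} (bounded by $\cepsilon/|\Mm|$) as soon as $|S_\chi \setminus \mathcal{I}| \geq T$, equals $1$ precisely when $S_\chi \subset \mathcal{I}$, and otherwise lies in the intermediate regime $1 \leq |S_\chi \setminus \mathcal{I}| < T$.

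The plan is to build $\mathcal{I}$ greedily: start with $\mathcal{I}_0 = \emptyset$; at step $k$ pick any character $\chi$ whose current coefficient is neither small nor $1$, and set $\mathcal{I}_{k+1} = \mathcal{I}_k \cup S_\chi$. This step adds $|S_\chi \setminus \mathcal{I}_k| < T$ indices. Once $S_\chi \subset \mathcal{I}$ the coefficient at $\chi$ equals $1$ and remains $1$ under any further enlargement of $\mathcal{I}$, so each character is processed at most once. The algorithm therefore terminates in at most $|\widehat{\Mm}| = |\Mm|$ iterations and produces $\mathcal{I}$ with $|\mathcal{I}| < T|\Mm|$, which is condition (1) of the lemma.

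For condition (2), let $H_0 = \{\chi \in \widehat{\Mm} : \probE[\chi(\sum_{i \notin \mathcal{I}} \Cchi_i \xi_i)] = 1\}$ and define $\supp = H_0^{\perp}$; by Pontryagin duality this is exactly the subgroup of $\Mm$ generated by the support of $\sum_{i \notin \mathcal{I}} \Cchi_i \xi_i$. Since $0$ lies in the support of each $\xi_j$ by (\ref{eqn:zeroone}), for any $i_0 \notin \mathcal{I}$ and any $x$ in the support of $\Cchi_{i_0} \xi_{i_0}$ the decomposition $x = x + \sum_{j \notin \mathcal{I},\, j \neq i_0} 0$ exhibits $x$ as an element of the support of the remaining sum, hence of $\supp$. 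Conversely, each $i \in \mathcal{I}$ was added to witness some processed $\chi \in H_0$ with $i \in S_\chi$; since $\probE[\chi(\Cchi_i \xi_i)] \neq 1$ there must exist an element of the support of $\Cchi_i \xi_i$ on which $\chi$ is non-trivial, and such an element cannot belong to $\supp = H_0^{\perp}$.

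The main obstacle is that enlarging $\mathcal{I}$ removes factors of modulus less than $1$ from the Fourier product, which can only \emph{increase} a given coefficient; one cannot simply iterate until all coefficients fall below $\cepsilon/|\Mm|$. The resolution is the ``$1$ versus $(\cepsilon/|\Mm|)^{1/T}$-small'' dichotomy from the preliminary lemma, which turns ``coefficient at $\chi$ equals $1$'' into a monotone property of $\mathcal{I}$: once $S_\chi$ is absorbed into $\mathcal{I}$ the character $\chi$ is permanently settled, and this yields the termination bound of $|\Mm|$ iterations.
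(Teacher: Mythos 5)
Your proposal is correct and takes essentially the same route as the paper: the same greedy absorption, justified by the preliminary lemma's dichotomy (each individual coefficient is $1$ or of modulus below $(\epsilon/|M|)^{1/T}$), of the fewer than $T$ offending indices attached to each ``large'' character, yielding the $T|M|$ bound. The only differences are bookkeeping: you terminate by noting each character is permanently settled at $1$ once its index set is absorbed (the paper instead tracks the strictly shrinking subgroup generated by the remaining supports), and you verify condition (2) a posteriori via Pontryagin duality and the witnessing characters rather than maintaining it as a loop invariant.
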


\begin{proof}
We proceed iteratively, starting with the empty set. The iteration
step is as follows. Suppose that we have an index set $\mathcal{J}$
that satisfies the second condition of the lemma, e.g. the set
$\emptyset$. Further suppose that some Fourier coefficient of
\[
\sum_{i \notin J} \cxi
\] is \textit{large}. Then the kernel of this Fourier coefficient contains:
\[
support(\cxi)
\]
for all but at most $T$ indices $i$. Adding these indices to
$\mathcal{J}$, we obtain a new index set $\mathcal{J}'$. This index
set has the property that the smallest subgroup of $\Mm$ that contains:
\[
support(\cxi) \hspace{0.5in} \text{for all } i \notin \mathcal{J}'
\]
does not contain
\[
support(\cxi) \hspace{0.5in} \text{for any } i \in \mathcal{J}'.
\]
As the support of
\[
\sum_{i \in \mathcal{J}'} \cxi
\]
is strictly contained inside the support of
\[
\sum_{i \in \mathcal{J}} \cxi,
\]
the iteration must halt after at most $|\Mm|$ steps. Therefore the
final index set has cardinality at most $|\Mm|T$.
\end{proof}

Let $\supp$ be the smallest subgroup that contains the support of:
\begin{equation}
\label{eqn:sumnotini}
\left( \sum_{i \notin \mathcal{I}} \cxi \right).
\end{equation}
The Fourier coefficient of the restriction of (\ref{eqn:sumnotini})
are either equal to $1$ or "small". Therefore,
\[
\sum_{i \notin \mathcal{I}} \cxi
\]
is $\cepsilon$-equidistributed on $\supp$.

\begin{remark}
The non-uniqueness of $\supp$, stated in \autoref{thm:auxcthree} is due to the non-uniqueness of our choice of $\mathcal{I}$.
\end{remark}

\begin{lemma}
\label{lem:betaepsilon}
For any $i \in \mathcal{I}$, the random variable $\cxi \mod \supp$
takes at least two distinct values with non-zero probability.
Furthermore,
\begin{itemize}
\item We have the bound
\[
\linf{\cxi \mod \supp} \leq 1 - \alpha.
\]
\item If $\supp$ an $R$-module, we have the stronger bound
\[
\linf{\cxi \mod \supp} \leq
\linf{\xi \mod \mathfrak{m} }
\leq 1- \beta.
\]
\end{itemize}
\end{lemma}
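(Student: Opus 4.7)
My plan is to exploit the defining property of $\mathcal{I}$ from the previous lemma, namely that $\text{support}(\cxi) \not\subset \supp$ for $i \in \mathcal{I}$, combined with the normalization from (\ref{eqn:zeroone}) that $0 \in \text{support}(\xi)$. The first assertion---that $\cxi \mod \supp$ takes at least two distinct values with positive probability---is then immediate: the residue class $0 \in M/\supp$ is attained with positive probability because $0 \in \text{support}(\xi)$, and some other residue class is also attained with positive probability because $\text{support}(\cxi) \not\subset \supp$.

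For the bound $\linf{\cxi \mod \supp} \leq 1 - \alpha$, I let $v^{*} \in M/\supp$ achieve the maximum mass of $\cxi \mod \supp$. By the preceding paragraph, some other class $v'$ also carries positive mass, so there exists $\bar{r} \in R/\ann{M}$ with $m_i \bar{r} \equiv v' \mod \supp$ and $\probP(\xi \equiv \bar{r} \mod \ann{M}) > 0$. By the definition of $\alpha$, this probability is at least $\alpha$, so the mass at $v^{*}$ is at most $1 - \alpha$.

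For the stronger bound when $\supp$ is an $R$-submodule of $M$, I will consider the $R$-module map $\varphi \colon R \to M/\supp$ sending $r \mapsto m_i r + \supp$. The key observation is that $m_i \notin \supp$: otherwise, since $\supp$ is closed under the $R$-action, $R m_i \subset \supp$ would force $\text{support}(\cxi) \subset \supp$, contradicting $i \in \mathcal{I}$. Hence $1 \notin \ker \varphi$, so $\ker \varphi$ is a proper ideal of the local ring $R$ and therefore contained in $\mathfrak{m}$. Every non-empty fibre $\varphi^{-1}(v)$ is then a coset of $\ker \varphi$, which is contained in a single $\mathfrak{m}$-coset of $R$. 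Pushing the distribution of $\xi$ forward through $\varphi$ yields
\[
\linf{\cxi \mod \supp} \leq \linf{\xi \mod \mathfrak{m}} \leq 1 - \beta,
\]
where the second inequality is the definition of $\beta$.

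I do not anticipate any substantive obstacle here: the argument is essentially a careful unwinding of the definitions of $\mathcal{I}$, $\alpha$, and $\beta$. The one point that warrants attention is the verification that $m_i \notin \supp$ in the $R$-module case, as this is precisely where the extra hypothesis on $\supp$ is used and where the gap between the bounds $1-\alpha$ and $1-\beta$ arises.
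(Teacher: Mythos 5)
Your proposal is correct and follows essentially the same route as the paper: the two-values claim from $0\in\mathrm{support}(\xi)$ plus $\mathrm{support}(\cxi)\not\subset\supp$, the $1-\alpha$ bound by noting a second residue class carries mass at least $\alpha$ (a sum of probabilities $\probP(\xi\equiv r \bmod \ann{M})$), and the $R$-module case via the proper ideal $\ker\varphi=\{r\in R \mid r\Cchi_i\in\supp\}\subseteq\mathfrak{m}$, which is exactly the paper's $(\supp:\Cchi_i)$ argument. No gaps.
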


\begin{proof}
The support of $\xi$ contains $0$. Hence the random variable induced
by $\cxi$ on $\Mm / \supp$ takes the value $0$ with positive
probability. Moreover, the support of $\cxi$ is not contained in
$\supp$. Hence, the random variable induced by $\cxi$ on $\Mm / \supp$
also a takes a non-zero value with positive probability. This proves
the first part of the lemma.
\newline
\newline
The probability that this induced random variable takes any
given value is a sum of terms of the form: $\prob( \xi_i = r)$. Hence
this probability is either $0$ or it is bounded below by $\alpha$.
Therefore, since the induced random variable takes at least two
distinct values with non-zero probability, each non-zero probability
must be at least $\alpha$. This proves the second part of the lemma.
\newline
\newline
Lastly, if $\supp$ is an $R$-module, then
\[
\linf{ \cxi \mod \supp}
=
\max_{m \in \Mm} \probP \big(\cxi \equiv m \mod \supp \big)=
\]
\[
=\max_{r \in R} \probP \big(\xi \equiv r \mod (\supp:\Cchi_i)\big)
\leq
\]
\[
\leq
\max_{r \in R} \probP \big( \xi \equiv r \mod \mathfrak{m} \big)
=
\]
\[
=\linf{\xi \mod \mathfrak{m}} \leq 1-\beta
\]
where we have used the notation $(\supp:\Cchi_i)$ to denote the ideal
\[\{ r \in R| r m_i \in \supp \}\]. This is a proper ideal of $R$, as $\supp$
does not contain the support of $\cxi$.
\end{proof}

\begin{proof} (of \autoref{thm:auxcthree})
We combine \autoref{thm:equidist} and  \autoref{lem:betaepsilon}. 
\begin{itemize} 
\item It follows that:

\[
\linf{ \sum_i \cxi  } \leq \frac{(1+\cepsilon)}{|\supp|} \linf{ \sum_i
\cxi \mod \supp } \leq
\]

\[
\leq \frac{(1+\cepsilon)(1-\alpha)}{|\supp|}
\]

\item When $\supp$ is an $R$-module, we have the better bound:
\[
\linf{ \sum_i \cxi}
\leq
\frac{(1+\cepsilon)}{|\supp|}
\linf{\xi \mod \mathfrak{m} }
\leq \frac{(1+\cepsilon)(1-\beta)}{|\supp|}
\]
\end{itemize}

\newcommand{\suppremark}{
\begin{remark} Some non-canonical choices were made in the iterative
proof of \autoref{lem:indexset}, and therefore $\mathcal{I}$ is not
necessarily unique and hence $\supp$ is not necessarily unique.
\end{remark}
}
\newcommand{\cthreeunnecessaryremark}{
The main idea, however, is that the kernel of a large Fourier coefficient
\textit{does} contain the support of $\cxi$ for all but a uniformly
bounded number of indices. We iteratively discard such indices. The
iterative procedure terminates because, each time that, by discarding
indices, we arrange a Fourier coefficient to be equal to $1$, the new
random variable becomes supported on a smaller subgroup of $M$. At the
end, we are left with a random variable all of whose Fourier
coefficients are either \textit{small} or equal to $1$.
\end{remark}
}

\end{proof}

\subsubsection{Proof of \autoref{thm:maincthreetheorem}}
\label{sec:endofproof}
Now, the proof of \autoref{thm:maincthreetheorem} is analogous to the
proof of \autoref{thm:ctwo}.

\newcommand{\combinatorialset}
{
\left\{ 
\Cchii \in M^{\rows} 
\Big| 
\substack{
support(\cxi) \,\in \, \supp \text{ for}
\\
\text{all but $|\Mm|T$ indices $i$ } 
}
\right\}
}
\noindent
By \autoref{thm:auxcthree}, we can bound
\[
\sum_{\Cchii \in \Cc_3} \linf{\sum_i \cxi}^{\cols}
\]
by
\[
\sum_{
\substack{
\supp \subset \Mm \\
\supp \text{ not an} \\
\text{$R$-module}
}
}
\#\combinatorialset \left( \frac{(1+\cepsilon)(1-\alpha)}{|\supp|}
\right)^{\cols}
\]
\begin{equation}
\label{eqn:finalbound}
+
\end{equation}
\[
\sum_{
\substack{
\supp \subset \Mm \\
\supp \text{ is an} \\
\text{$R$-module}
}
}
\#\combinatorialset \left( \frac{(1+\cepsilon)(1-\beta)}{|\supp|}
\right)^{\cols}
\]

\begin{lemma}
For all $\supp$, we have the bound:
\[
\# \combinatorialset \leqsim
\]
\begin{equation}
\label{eqn:combinatorialbound}
\leqsim (1+\epsilon)^{\rows} |\supp|^{\rows}
\end{equation}
where the implied constant depends on $\epsilon$ and $|\Mm|$ and $T$.
When $\supp$ is an $R$-module, we have the stronger bound:
\[
\# \combinatorialset \leqsim
\]
\begin{equation}
\label{eqn:combinatorialboundtwo}
\leqsim (1+\epsilon)^{\rows}
\left(
\frac{|\supp|}{char(R / \mathfrak{m}) }
\right)^{\rows}.
\end{equation}
\end{lemma}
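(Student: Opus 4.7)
The plan is a direct combinatorial count organized by the exceptional index set. Fix $\mathcal{I}\subset\{1,\ldots,\rows\}$ with $|\mathcal{I}|\leq|\Mm|T$; any $\Cchii\in\combinatorialset$ whose exceptional indices lie in $\mathcal{I}$ is determined by arbitrary values $\Cchi_j\in\Mm$ for $j\in\mathcal{I}$ together with values $\Cchi_i$ for $i\notin\mathcal{I}$ satisfying $support(\cxi)\subset\supp$. Introducing the annihilator-like set $A_{\supp}=\{m\in\Mm:m\cdot support(\xi)\subset\supp\}$, the latter condition is precisely $\Cchi_i\in A_{\supp}$. Summing over the choice of $\mathcal{I}$ yields
\[
\#\combinatorialset\ \leq\ \sum_{k=0}^{|\Mm|T}\binom{\rows}{k}\,|\Mm|^{k}\,|A_{\supp}|^{\rows-k}\ \leq\ C_{|\Mm|,T}\,\rows^{|\Mm|T}\,|\Mm|^{|\Mm|T}\,|A_{\supp}|^{\rows}.
\]
The polynomial-in-$\rows$ prefactor is absorbed into $(1+\epsilon)^{\rows}$ at the cost of a constant depending on $\epsilon$, $|\Mm|$ and $T$, producing the master estimate
\[
\#\combinatorialset\ \lesssim\ (1+\epsilon)^{\rows}\,|A_{\supp}|^{\rows}.
\]

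The general bound (\ref{eqn:combinatorialbound}) is immediate from $A_{\supp}\subset\supp$: by the normalization (\ref{eqn:zeroone}) I may assume $1\in support(\xi)$, and then any $m\in A_{\supp}$ satisfies $m\cdot 1=m\in\supp$, so $|A_{\supp}|\leq|\supp|$.

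For the strengthened bound (\ref{eqn:combinatorialboundtwo}) I will apply \autoref{lem:notamodule} with $S=support(\xi)$: it characterises $A_{\supp}=\supp$ as equivalent to $\supp$ being stable under the $R$-action, so strict containment $A_{\supp}\subsetneq\supp$ occurs in the complementary case, exactly as in the $\Cc_2$ step. Since $\Mm$ is a $p$-group with $p=char(R/\mathfrak{m})$, so is $\supp$, hence any proper subgroup has index at least $p$; this forces $|A_{\supp}|\leq|\supp|/p$, and substitution into the master estimate yields the claimed $(|\supp|/p)^{\rows}$ factor. The main delicate point is the orientation of the $R$-module hypothesis: the mechanism just described extracts the $1/p$ gain when $\supp$ is \emph{not} an $R$-module, complementary to the hypothesis as literally written in (\ref{eqn:combinatorialboundtwo}); before writing the final proof I would align this orientation with the role the two bounds play in the two summands of (\ref{eqn:finalbound}), mirroring the $\Cc_2$ argument so that the stronger bound is applied to the summand whose linf factor provides the weaker $(1-\alpha)$ decay.
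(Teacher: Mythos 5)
Your argument is correct and essentially the same as the paper's: the paper likewise counts by choosing the at most $|\Mm|T$ exceptional positions (the binomial factor ${\rows \choose |\Mm|T}$ being polynomial in $\rows$ and absorbed into $(1+\epsilon)^{\rows}$), allowing arbitrary values of $\Cchi_i$ there and values in $\{m \in \Mm \,|\, m\, support(\xi) \subset \supp\}$ elsewhere, and then invokes \autoref{lem:notamodule} together with the $p$-group index argument. Your closing concern about orientation is well founded but is a typo in the statement rather than a flaw in your proof: the paper's own proof (consistently with the corollary in the $\Cc_2$ section and with how the two bounds are paired with the $(1-\alpha)$ and $(1-\beta)$ factors in (\ref{eqn:finalbound})) assigns the stronger bound $\big(|\supp|/char(R/\mathfrak{m})\big)^{\rows}$ to the case where $\supp$ is \emph{not} an $R$-module, exactly as your mechanism produces.
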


\begin{proof}
In both cases, we have:
\[
\# \combinatorialset \leq
\]
\[
\leq \# \left\{
\Cchi \in \Mm
\Big|
support(m \xi) \in \supp
\right\}^{\rows - |\Mm|T} |\Mm|^{|\Mm|T}
{\rows \choose |\Mm|T}
\]
Now, by \autoref{lem:notamodule},
\begin{equation}
\label{eqn:suppbound:cthree}
\#\left\{
\Cchi \in  \Mm
\Big|
support(m \xi) \in \supp
\right\}
\end{equation}
is bounded by $|\supp|$ when $\supp$ is an $R$-module, and bounded by
$\frac{|\supp|}{char(R / \mathfrak{m})}$ when $\supp$ is not an
$R$-module.
\newline
\newline
Lastly, we note that $ {\rows \choose |\Mm|T}$ is a polynomial in
$\rows$. In particular, it grows slower than any
exponential. Therefore,
\[
{\rows \choose |\Mm|T} \leqsim (1+\epsilon)^{\rows}
\]
where the implied constant depends on $\epsilon$, $\Mm$ and $T$. Therefore,
\[
\combinatorialset \leqsim
\]
\[
\leqsim |\supp|^{\rows} |\Mm|^{|\Mm|T}
(1+\epsilon)^{\rows} \leqsim
\]
\[
\leqsim |\supp|^{\rows} (1+\epsilon)^{\rows}
\]
where again the implied constant depends on $\epsilon$, $\Mm$ and $T$.
\end{proof}

Finally, substituting (\ref{eqn:combinatorialbound}) into
(\ref{eqn:finalbound}) yields
\[
\sum_{
\{ \Cchi_i \} \in \Cc_3} \linf{ \sum_i \cxi}^{\cols} \leqsim
(1+\cepsilon)^{2n+u}(1-\beta)^{\cols}
\]
where the implied constant depends on $\cepsilon$, $u$, $\Mm$ and $T$.
Recalling that $T$ is determined by $\Mm$, $\cepsilon$ and the
distribution of $(\xi \mod \text{ann} \, \Mm)$, we
deduce \autoref{thm:maincthreetheorem}.
\paragraph{Deduction of \autoref{thm:fundamentalmomentinequality}}
Finally, we choose $\cepsilon$ such that $(1+\cepsilon)^{3} <
(1+\epsilon')$. Combining the three estimates - \autoref{lem:cone},
\autoref{thm:ctwo} and \autoref{thm:maincthreetheorem} - we
prove that (\ref{eqn:linfsum}) is bounded above by (\ref{eqn:bigo}). Hence, we deduce \autoref{thm:fundamentalmomentinequality}.

\subsection{Replacing some of the entries of the matrix by independent uniformly random variables}

The purpose of this section is to show that \autoref{thm:fundamentalmomentinequality} continues to hold when we replace some of the entries of $\probM_{n,n+u}$ by independent uniformly random variables.
\newline
\newline
Let $\bar{\probM}$ be an $n \times n+u$ random matrix whose entries are independent random variables. $\bar{\probM}$ is independent of $\probM_{n,n+u}$. The entries of $\bar{\probM}$ are not necessarily identically distributed.
\newline
\newline
\begin{lemma}
\label{lem:uniformentries}
The sum of 
\begin{equation}
\label{eqn:sumofmatrices}
\maxzero{
\probP
\big(  f(\ppM+\bar{\probM})=0 \big)
}
\end{equation}
over all \[f \in 
Sur(R^{\rows}, M)\] is bounded above by (\ref{eqn:bigo})
\end{lemma}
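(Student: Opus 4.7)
My plan is to observe that the proof of \autoref{thm:fundamentalmomentinequality} goes through essentially verbatim, because the key reduction to the quantity (\ref{eqn:linfsum}) only uses column-wise independence and the upper bound $\probP(\cdot = 0) \leq \linf{\cdot}$, both of which are unaffected by adding independent noise.

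More concretely, first I would use independence of the columns of $\ppM + \bar{\probM}$ to rewrite
\[
\probP\bigl(f(\ppM + \bar{\probM}) = 0\bigr)
= \prod_j \probP\left( \sum_i m_i(\xi_{ij} + \bar{\probM}_{ij}) = 0 \right).
\]
For each column $j$, set $X_j = \sum_i m_i \xi_{ij}$ and $Y_j = \sum_i m_i \bar{\probM}_{ij}$. These are independent $M$-valued random variables, so the distribution of $X_j + Y_j$ is the convolution of their distributions. Since convolution with a probability measure cannot increase the $l^{\infty}$ norm,
\[
\linf{X_j + Y_j} \leq \linf{X_j} = \linf{\textstyle \sum_i m_i \xi_{i1}},
\]
where the last equality uses that the $\xi_{ij}$ are i.i.d. Therefore
\[
\probP\bigl(f(\ppM + \bar{\probM}) = 0\bigr) \leq \linf{\textstyle \sum_i m_i \xi_{i1}}^{\cols},
\]
which is exactly the bound obtained for $\probP(f(\ppM)=0)$ in the derivation of (\ref{eqn:linfsum}).

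Summing over $f = \{m_i\}$ with $\max(\cdot, 0)$ applied, the sum (\ref{eqn:sumofmatrices}) is bounded above by (\ref{eqn:linfsum}). The remaining argument (the splitting into $\Cc_1, \Cc_2, \Cc_3$ and the bounds \autoref{lem:cone}, \autoref{thm:ctwo}, \autoref{thm:maincthreetheorem}) depends only on the distribution of the single random variable $\sum_i \cxi$, not on the matrix, and therefore applies without modification to yield the bound (\ref{eqn:bigo}).

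I do not anticipate a real obstacle: the only point where one might worry is whether the column distributions being non-identical causes trouble, but since the bound $\linf{X + Y} \leq \linf{X}$ is applied column-by-column and produces the uniform upper bound $\linf{\sum_i m_i \xi_{i1}}$ regardless of the distribution of $\bar{\probM}_{ij}$, there is nothing further to check.
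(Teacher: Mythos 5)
Your proposal is correct and follows essentially the same route as the paper: bound each column's probability by the $l^{\infty}$ norm, use the fact that convolving with the independent $\bar{\probM}$-contribution cannot increase that norm, reduce to (\ref{eqn:linfsum}), and invoke the already-established bound (\ref{eqn:bigo}). Nothing further is needed.
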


\begin{proof}
Denote the entries of $\bar{\probM}$ by $\bar{\xi}_{ij}$. We proceed as in \S \ref{sec:approach}. By independence, we can rewrite (\ref{eqn:sumofmatrices}) as:
\[
\maxzero{
\prod_{j} \probP \left( \sum_i \Cchi_i (\xi_{ij}+\bar{\xi}_{ij})= 0  \right)
} \leq
\]
\[
\leq
\maxzero{
\prod_{j} \linf{ \sum_i \Cchi_i \xi_{ij} + \Cchi_i \bar{\xi}_{ij} } 
} 
\leq
\]
\begin{equation}
\label{eqn:linfsumbound}
\leq
\maxzero{
\prod_{j} \linf{ \sum_i \Cchi_i \xi_{ij} } 
} 
\end{equation}
The last inequality holds because the $l^{\infty}$ norm of a sum of two independent random variables is bounded above by the $l^{\infty}$ norm of either variable. Hence the sum of (\ref{eqn:sumofmatrices}) is precisely (\ref{eqn:linfsum}). By the last paragraph of \S \ref{sec:endofproof}, (\ref{eqn:linfsum}) is bounded above by (\ref{eqn:bigo}).
\end{proof}

\begin{Corollary}
\autoref{thm:fundamentalmomentinequality} remains true if we replace some subset of the entries of $\ppM$ by independent uniformly random variables.
\end{Corollary}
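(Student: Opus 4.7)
The plan is to reduce the Corollary directly to \autoref{lem:uniformentries}, which was just established. The key elementary observation is that if $\xi$ is any $R$-valued random variable and $\eta$ is an independent uniformly distributed $R$-valued random variable, then $\xi+\eta$ is uniformly distributed on $R$. This simple convolution fact lets us realize the ``modified'' matrix—the matrix obtained from $\ppM$ by replacing a prescribed subset of entries by independent uniform variables—as a sum $\ppM + \bar{\probM}$ of independent matrices, at which point the conclusion is exactly what \autoref{lem:uniformentries} provides.

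More concretely, I would let $S$ denote the subset of positions $(i,j)$ whose entries we wish to replace by uniform variables, and construct an auxiliary matrix $\bar{\probM}$, independent of $\ppM$, by declaring $\bar{\xi}_{ij}$ to be uniformly distributed on $R$ for $(i,j)\in S$ and $\bar{\xi}_{ij}\equiv 0$ for $(i,j)\notin S$. By the convolution observation, $\ppM + \bar{\probM}$ has the same joint distribution as the modified matrix. Since $\bar{\probM}$ has independent entries that are not required to be identically distributed, and is independent of $\ppM$, it fits the hypotheses of \autoref{lem:uniformentries} verbatim, which then yields the same upper bound (\ref{eqn:bigo}) on the relevant sum over $f \in Sur(R^{\rows}, M)$.

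I do not expect any real obstacle in this step: the substantive work is already contained in \autoref{lem:uniformentries}, whose proof exploited the monotonicity of the $l^\infty$-norm under convolution with an independent random variable. The Corollary is essentially a repackaging of that lemma in the form in which \autoref{thm:fundamentalmomentinequality} is actually invoked in the proof of the column-swapping estimate, where certain columns of the matrix must be taken to be uniformly distributed. The only thing to verify—and it is immediate—is that the implied constant still depends only on $\epsilon'$, $M$, $u$ and the minimal non-zero value of $\probP(\xi\equiv r\mod \ann{M})$, since replacing entries by uniform variables does not alter these data for the entries of $\ppM$ that remain.
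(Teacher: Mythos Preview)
Your proposal is correct and matches the paper's own proof essentially verbatim: the paper also applies \autoref{lem:uniformentries} by taking $\bar{\probM}$ to have independent uniform entries on the prescribed subset $S$ and zero entries elsewhere. Your additional remark that $\xi+\eta$ is uniform when $\eta$ is uniform and independent is exactly the (unstated but obvious) justification for why $\ppM+\bar{\probM}$ has the desired distribution.
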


\begin{proof}
We apply \autoref{lem:uniformentries}; it suffices to let $\bar{\probM}$ be a matrix some of whose entries are independent uniformly random variables, while the other entries are identically $0$.
\end{proof}

\newpage
\section{Decomposing measures on modules}
\label{sec: decomposing measures}

\paragraph{} \textit{Short Summary.} In this section, we study
measures on a finite $R$-module $M$, where $R$ is a finite local ring. We
show that the vector space of these measures admits an orthogonal
decomposition, whose components are parametrized by homomorphisms from
$M$ to the dualizing module of $R$. This can be regarded as a slight
generalization of the usual Fourier decomposition.

  \subsection{Introduction}

Denote $\Z/m\Z$ as $\Z_m$. Let $G$ be a finite $\Z_m$-module, in other
words, $G$ is a finite abelian group that is $m$-torsion. 

\begin{definition}
Let $\PP(G)$ be the vector space of signed measures on $G$.
\end{definition}
\noindent
By Fourier analysis, there is an orthogonal decomposition of $\PP(G)$
into $1$-dimensional subspaces indexed by elements of \[\Hom(G,
\mathbb{C}^{*}) \cong \Hom(G,\Z_m).\] We can group these subspaces
together into those parametrized by "isomorphic" homomorphisms, i.e.
those homomorphisms that differ by an automorphism of $\Z_m$. This
gives a decomposition of $\PP(G)$ into orthogonal subspaces, indexed by 
\[
\chi  \in \Hom(G,\Z_m) \big/ \Z_m^{*}.
\]
\noindent
These subspaces, which we henceforth denote as $V(\chi)$, can be uniquely
characterized by the following three properties, without any reference
to Fourier analysis:
\begin{itemize}
\item[(A)] The elements of $V(G,\chi)$ are constant on $ker(\chi)$-cosets.
\marginparr{In other words, the pushforward map, induced by any homomorphism in the
equivalence class $\chi$, is injective on $V(G,\chi)$.}
\item[(B)] $V(G,\chi)$ and $V(G,\chi')$ are orthogonal unless $\chi=\chi'$.
\item[(C)] The subspaces $V(G,\chi)$ span $\PP(G)$, i.e.
\[
\PP(G) \cong \bigosum_{\chi} V(G,\chi).
\]
\end{itemize}

\paragraph{The present paper.} In this note, $M$ will be a finite
module over a finite local ring $R$. We will define and study a
decomposition of $\PP(M)$ that shares similar properties:

\begin{theorem}
\label{thm:centraltheorem}
Let $R$ be a finite local ring, let $M$ be a finite $R$-module and
denote by $\omega$ the dualizing module of $R$. Let $\PP(M)$ denote the space of signed measures on $M$. To every 
\begin{equation}
\label{eqn:chidef}
\chi \in \Hom_{R}(M,\omega) \big/ R^{*}
\end{equation}
we can associate a vector space $V(M,\chi) \in \PP(M)$ with the
following properties:
\begin{itemize}
\item[(A)] The elements of $V(M,\chi)$ are constant on $ker(\chi)$-cosets.
\marginparr{The pushforward map induced by any homomorphism in the
equivalence class $\chi$, is injective on $V(M,\chi)$.}
\item[(B)] $V(M,\chi)$ and $V(M,\chi')$ are orthogonal unless $\chi = \chi'$.  
\item[(C)] The vector spaces $V(M,\chi)$ span $\PP(M)$, i.e.
\begin{equation}
\label{eqn:decomposition:first}
\PP(M) \cong \bigosum_{\chi} V(M,\chi).
\end{equation}
\end{itemize}
\end{theorem}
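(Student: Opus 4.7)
The plan is to reduce the decomposition to ordinary Fourier analysis on the finite abelian group underlying $M$. Momentarily forgetting the $R$-structure and viewing $M$ merely as a finite abelian group, the space $\PP(M)$ of complex-valued signed measures is spanned in the usual way by the characters $e_\phi: m \mapsto e^{2\pi i \phi(m)}$ for $\phi \in \hat{M} := \Hom_\Z(M, \Q/\Z)$, and the one-dimensional subspaces $\mathbb{C}\cdot e_\phi$ are pairwise orthogonal. The task then reduces to grouping these subspaces correctly according to $R^*$-orbits of elements of $\Hom_R(M,\omega)$.

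The key step will be to exhibit a canonical, $R^*$-equivariant isomorphism between $\Hom_R(M,\omega)$ and $\hat{M}$. Since $R$ is finite local, its dualizing module can be realized as $\omega = \Hom_\Z(R, \Q/\Z)$, and the tensor-hom adjunction gives
\[
\Hom_R\!\big(M,\, \Hom_\Z(R,\Q/\Z)\big) \;\cong\; \Hom_\Z(M \otimes_R R,\, \Q/\Z) \;=\; \Hom_\Z(M, \Q/\Z),
\]
explicitly by $\chi \mapsto [m \mapsto \chi(m)(1)]$. I would check directly that this bijection is $R$-linear when $\hat{M}$ carries the standard dual $R$-module structure $(r\phi)(m) = \phi(rm)$; it is therefore $R^*$-equivariant and descends to a bijection $\Hom_R(M,\omega)/R^* \cong \hat{M}/R^*$.

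With this identification, I would define, for each equivalence class $[\chi] \in \Hom_R(M,\omega)/R^*$,
\[
V(M,\chi) \;:=\; \bigoplus_{\phi \in R^*\cdot \chi} \mathbb{C} \cdot e_\phi,
\]
where $R^*\cdot \chi \subset \hat{M}$ denotes the $R^*$-orbit of any representative. Property (A) will then follow because every $\phi \in R^*\cdot \chi$ has the same kernel as $\chi$ (multiplication by a unit does not alter the kernel of a module map), so each $e_\phi$, and hence every element of $V(M,\chi)$, is constant on $\ker(\chi)$-cosets. Property (B) is immediate from the pairwise orthogonality of distinct Fourier characters, together with the fact that distinct orbits are disjoint subsets of $\hat{M}$. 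Property (C) is the observation that the $R^*$-orbits partition $\hat{M}$, so summing the $V(M,\chi)$ recovers the full Fourier decomposition of $\PP(M)$.

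The main conceptual step is really just setting up the isomorphism $\Hom_R(M,\omega) \cong \hat{M}$ and verifying its $R^*$-equivariance; after that, all three properties are corollaries of standard Fourier analysis. The only subtlety I anticipate is ensuring that the chosen realization of $\omega$ agrees with the intrinsic dualizing module of $R$ — this is guaranteed for finite local rings because $\Hom_\Z(-,\Q/\Z)$ realizes Matlis duality on the category of finite $R$-modules.
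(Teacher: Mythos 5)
Your construction is correct and proves the theorem as stated, but it takes a genuinely different route from the paper. The paper never invokes classical characters at this stage: it defines $V(M,N)$ intrinsically, for every submodule $N$, as the common kernel of the averaging maps $\projj_{N'}$ over all $N'\supsetneq N$, proves orthogonality formally from $\projj_{N_1}\projj_{N_2}=\projj_{N_1+N_2}$, obtains the spanning statement by induction on $\#M$, and only then identifies which $V(M,N)$ are non-zero (the ``Fourier modules''), using $\Hom(k,M)\cong k$, Matlis duality, and the fact that two homomorphisms onto the same $\omega_I$ differ by a unit; this intrinsic description is what later yields the extra facts the paper uses (e.g.\ that $\projj_N$ kills $V(M,\chi)$ whenever $N\not\subseteq \ker(\chi)$, and the dimension $\dim V(M,\chi)=|\sfrac{R}{I}^{*}|$). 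Your route instead transports ordinary Fourier analysis on the underlying abelian group through the adjunction $\Hom_R(M,\Hom_{\Z}(R,\Q/\Z))\cong\Hom_{\Z}(M,\Q/\Z)$ and groups the character lines into $R^{*}$-orbits; this is shorter and gives an explicit basis, at the cost of having to justify that $\Hom_{\Z}(R,\Q/\Z)$ really is the dualizing module (injective with simple socle, of cardinality $\#R$ --- your appeal to Matlis duality is the right reason, but it deserves a line). Two small repairs are needed in your write-up: first, the paper's $\PP(M)$ is a \emph{real} space, so you should note that each orbit is stable under $\phi\mapsto-\phi$ (since $-1\in R^{*}$), whence the complex orbit-span is conjugation-stable and descends to a real subspace; second, your justification of (A) conflates the kernel of the $R$-module map $\chi\colon M\to\omega$ with the additive kernels of the characters $\phi$ in its orbit --- these additive kernels are generally \emph{not} equal to one another (nor submodules), and the correct statement is the containment $\ker(\chi)\subseteq\ker_{\Z}(\phi)$ for every $\phi$ in the orbit, since $\chi(m)=0$ forces $(u\chi)(m)(1)=0$ for all units $u$; with that containment each $e_\phi$ is constant on $\ker(\chi)$-cosets and (A) holds. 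With these fixes your decomposition coincides with the paper's (same containment plus matching dimensions), so either argument can serve, the paper's buying the intrinsic characterization used downstream and yours buying brevity and explicitness.
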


The main practical objective of this paper is to prove
\autoref{thm:centraltheorem} and to establish the useful inequalities
(\ref{eqn:centralinequality}) and (\ref{eqn:centralinequalitytwo}). \autoref{thm:centraltheorem} is a
consequence of \autoref{cor:chidecomposition}. The inequality
(\ref{eqn:centralinequality}), proven in \S
\ref{sec:importantinequality}, is a straightforward consequence of the
Cauchy-Schwarz inequality and the above decomposition. The inequality (\ref{eqn:centralinequalitytwo}), proven in \S \ref{sec:extrasection}, is a simple consequence of the Cauchy-Schwarz inequality. Along the way, we also establish some additional properties of the vector spaces $V(M,\chi)$.

\paragraph{Outline.} We start by defining a decomposition of $\PP(M)$ into orthogonal subspaces $V(M,N)$ where $N$ runs through all submodules of $M$. This part is purely formal. Then we show, using some basic facts in commutative algebra, that $V(M,N)$ is non-zero if and only if $N \cong ker(\chi)$ for some $\chi$ of the form (\ref{eqn:chidef}). The subspaces $V(M,\chi)$ are defined as $V(M,ker(\chi))$. In \S \ref{sec:isotypic}, we study a coarsening of the decomposition (\ref{eqn:decomposition:first}). \S
\ref{sec:importantinequality} is devoted to the inequality (\ref{eqn:centralinequality}).

  \subsection{Preliminaries: measures on a module}
 \renewcommand{\projj}{\text{proj}}
 \newcommand{\ccdot}{\, \cdot \,}
  
Throughout, $R$ will be a finite local ring and $M$ will be a finite $R$-module. As in the introduction, we will denote by $\PP(M)$ the real vector space of signed measures on $M$.
  \begin{definition}
 Define 
 \begin{equation}
 \label{eqn:innerproduct}
 \Big< \ccdot , \ccdot \Big>_{M} : \PP(M) \times \PP(M) \rightarrow \mathbb{R}
 \end{equation}
 to be the Euclidean inner product on $\PP(M)$, regarded as $\mathbb{R}^{(\#M)}$.
  \end{definition}

  \begin{definition}
 For any submodule $N$ of $M$, let $\PP(M,N) \subset \PP(M)$ denote the
 space of signed measures on $M$ that are constant on $N$-cosets.
  \end{definition}
  \noindent
We make the following remarks:
\begin{itemize}
\item The vector space $\PP(M,N)$ is isomorphic to the space of measures on
 $M/N$, i.e.
 \[
 \PP(M,N) \cong \PP(M/N)
 \]
  
 \item If $N_2 \subset N_1$, \[\PP(M,N_1)  \subset  \PP(M,N_2).\]
  
 \item We can also define a map in the opposite direction. Indeed, we can
 take the adjoint of the inclusion map 
 $\PP(M,N_1) \rightarrow \PP(M,N_2)$ with respect to the inner product
 (\ref{eqn:innerproduct}), to get a map that we can denote as
 $\projj_{N_1,N_2}$:
  
 \[
 \text{proj}_{N_1,N_2}: \PP(M,N_2) \rightarrow \PP(M,N_1) .
 \]

 \paragraph{Properties of $\text{proj}_{N_1,N_2}$.}
  
 $\text{proj}_{N_1,N_2}$ can also be defined as follows: given a
 measure in $\PP(M,N_2)$, "average out" this measure over $N_1$. This
 definition does not depend on $N_2$ and it is defined on all of
 $\PP(M)$. Hence we will subsequently simply write $\text{proj}_{N_1}$.
  
 We note the following properties of $\text{proj}$ that follow from the
 definition and the preceding discussion:
  
 \begin{itemize}
 \item The restriction of $\text{proj}_N$ to $\PP(M,N)$ is the identity.
 \item $\projj_{N_1} \projj_{N_2} = \projj_{N_1 +N_2}$.
 \end{itemize}

\end{itemize}

 \subsection{Decomposition of $\PP(M)$ into orthogonal subspaces}
  
 \begin{definition}
 Define
 \[
 V(M,N) \defeq
 \bigcap_{
 \substack {N \subset N' \\ N \neq N' }
 }
 \ker\Big(
 \PP(M,N)
 \xrightarrow{ \text{proj}_{N',N} }
 \PP(M,N') 
 \Big)
 \]
 \end{definition}
  
 \begin{lemma}
 \label{lem:orthogonal}
 $V(M,N_1)$ and $V(M,N_2)$ are orthogonal subspaces of $\PP(M)$
 for $N_1 \neq N_2$.
 \end{lemma}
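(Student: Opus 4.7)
My plan is to use the two structural facts about $\projj$ recorded immediately before the lemma: the composition law $\projj_{N_1}\projj_{N_2} = \projj_{N_1+N_2}$, and the fact that $\projj_N$, regarded as an endomorphism of $\PP(M)$, is self-adjoint with respect to $\langle\cdot,\cdot\rangle_M$. Self-adjointness is not stated explicitly, but it will follow immediately from the paper's definition: $\projj_N$ is constructed from the inclusion $\iota_N: \PP(M,N)\hookrightarrow\PP(M)$ by taking its adjoint, so as an endomorphism of $\PP(M)$ it equals $\iota_N\iota_N^{*}$, the orthogonal projection onto $\PP(M,N)$, and is therefore self-adjoint. Since the composition law is symmetric in its indices, I will also be free to write $\projj_{N_2}\projj_{N_1} = \projj_{N_1+N_2}$.

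Since $N_1 \neq N_2$, at least one of the containments $N_1 \subset N_2$, $N_2 \subset N_1$ must fail; by the symmetry of the statement I may assume $N_2 \not\subset N_1$, and set $N' := N_1 + N_2$, which then strictly contains $N_1$. Fixing arbitrary $\mu_1 \in V(M,N_1)$ and $\mu_2 \in V(M,N_2)$, the inclusions $V(M,N_i)\subset\PP(M,N_i)$ give $\mu_i = \projj_{N_i}\mu_i$ for $i=1,2$. I will then apply the composition law to $\mu_1$ to get
\[
\projj_{N_2}\mu_1 \;=\; \projj_{N_2}\projj_{N_1}\mu_1 \;=\; \projj_{N'}\mu_1.
\]
The defining intersection of $V(M,N_1)$ as kernels of $\projj_{N''}$ over all $N'' \supsetneq N_1$ forces $\projj_{N'}\mu_1 = 0$, since $N' \supsetneq N_1$. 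Combining with self-adjointness then finishes the proof in one line:
\[
\langle \mu_1, \mu_2\rangle_M \;=\; \langle \mu_1, \projj_{N_2}\mu_2\rangle_M \;=\; \langle \projj_{N_2}\mu_1, \mu_2\rangle_M \;=\; 0.
\]

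The argument is purely formal once the two structural facts are in hand, and I do not anticipate any real obstacle. The only piece of bookkeeping worth flagging is the self-adjointness of $\projj_N$, which the paper has not spelled out but which is a direct consequence of the definition as the adjoint of an inclusion; modulo this observation, the proof reduces to a short calculation and the symmetry-based choice of $N_2 \not\subset N_1$.
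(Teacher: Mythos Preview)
Your proof is correct and is essentially the same as the paper's: both use $\mu_i=\projj_{N_i}\mu_i$, self-adjointness of $\projj_{N_i}$, and the composition law $\projj_{N_1}\projj_{N_2}=\projj_{N_1+N_2}$ to reduce $\langle\mu_1,\mu_2\rangle$ to an expression containing $\projj_{N_1+N_2}\mu_i$, which vanishes by the defining kernel condition once $N_1+N_2\supsetneq N_i$. The only cosmetic difference is that the paper keeps the argument symmetric, writing $\langle\projj_{N_1+N_2}\nu_1,\projj_{N_1+N_2}\nu_2\rangle=0$ and noting that at least one factor is zero, whereas you break the symmetry upfront with the WLOG assumption $N_2\not\subset N_1$.
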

  
 \begin{proof}
 Suppose $\nu_1 \in V(M,N_1)$ and $\nu_2 \in V(M,N_2)$ and $N_1 \neq
 N_2$. Then we have:
 \[
 \Big< \nu_1, \nu_2 \Big> = 
 \Big< \projj_{N_1} \nu_1 , \nu_2
 \Big> =
 \Big<  \projj_{N_2} \projj_{N_1} \nu_1, \nu_2
 \Big> =
 \]
 \[
 =
 \Big< \projj_{N_1 +N_2} \nu_1, \nu_2
 \Big> =
 \Big<
 \projj_{N_1+N_2} \nu_1 
 ,
 \projj_{N_1 +N_2} \nu_2
 \Big>
 =0
 \]
because either $\nu_1$ or $\nu_2$ lies in the kernel of $\projj_{N_1+N_2}$. 
 \end{proof}

 \newcommand{\paragraphinsertv}{
 $V(M,N)$ is the subspace of $\PP(M,N)$ that is orthogonal to
$\PP(M,N')$ for all $N'$ that strictly contain $N$.
 }
  
 \begin{lemma}
 \begin{equation}
 \label{eqn:redundantdecomposition}
 \PP(M) = \bigosum_{N \subset M } V(M,N)
 \end{equation}
 \end{lemma}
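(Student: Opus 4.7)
My plan is to prove the stronger claim
\[
\PP(M,N) \;=\; \bigosum_{N' \supseteq N} V(M,N')
\]
by downward induction on $N$ (that is, induction on $|M/N|$), and then specialize to $N=0$, since every measure is trivially constant on $\{0\}$-cosets so $\PP(M)=\PP(M,0)$.

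First I would unpack the definition of $V(M,N)$ to obtain a more usable description. Because $\projj_{N',N}$ is by construction the adjoint of the inclusion $\PP(M,N')\hookrightarrow \PP(M,N)$ with respect to the Euclidean inner product, the kernel of $\projj_{N',N}$ inside $\PP(M,N)$ is exactly the orthogonal complement of $\PP(M,N')$ in $\PP(M,N)$. Intersecting over all $N'\supsetneq N$ gives
\[
V(M,N) \;=\; \Big(\sum_{N'\supsetneq N} \PP(M,N')\Big)^{\perp\ \text{in}\ \PP(M,N)},
\]
so that we have the orthogonal decomposition inside $\PP(M,N)$:
\[
\PP(M,N) \;=\; V(M,N)\ \oplus\ \sum_{N'\supsetneq N} \PP(M,N').
\]

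Next I would run the induction. The base case $N=M$ is immediate since the indexing set of the intersection defining $V(M,M)$ is empty, so $V(M,M)=\PP(M,M)$. For the inductive step, fix $N\subsetneq M$ and assume the desired decomposition for every $N'\supsetneq N$. Summing those identities gives
\[
\sum_{N'\supsetneq N} \PP(M,N') \;=\; \sum_{N''\supsetneq N} V(M,N''),
\]
and by \autoref{lem:orthogonal} this sum is orthogonal. Combining with the complementary decomposition displayed above yields
\[
\PP(M,N) \;=\; V(M,N)\ \oplus\ \bigosum_{N''\supsetneq N} V(M,N'') \;=\; \bigosum_{N''\supseteq N} V(M,N''),
\]
as desired. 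Setting $N=0$ gives (\ref{eqn:redundantdecomposition}).

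The only real subtlety — and what I expect to be the crux of writing it cleanly — is the adjointness identification $\ker(\projj_{N',N}|_{\PP(M,N)}) = \PP(M,N')^{\perp\ \text{in}\ \PP(M,N)}$, which turns the somewhat formal intersection of kernels defining $V(M,N)$ into an orthogonal complement. Everything else is then a routine finite-dimensional induction over the (finite) lattice of submodules of $M$, using \autoref{lem:orthogonal} to promote each sum into a direct sum.
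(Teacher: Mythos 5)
Your proof is correct. It reaches the decomposition by a route that is organized differently from the paper's. The paper inducts on the cardinality of $M$: it applies the statement to the quotients $M/L$ for $L \neq 0$, transports the resulting decompositions through the identification $\PP(M/L) \cong \PP(M,L)$ to obtain $\PP(M,L) = \bigosum_{L \subset N \subset M} V(M,N)$ (its equation (\ref{eqn:protodecomposition})), and then treats the single remaining piece $V(M,0)$ via \autoref{sublem:orthogonal}, i.e.\ the fact that $\ker(\projj_N)$ is the orthogonal complement of $\PP(M,N)$ in $\PP(M)$. You instead stay inside $\PP(M)$ for a fixed $M$ and run a downward induction on the submodule lattice, proving the relative statement $\PP(M,N) = \bigosum_{N' \supseteq N} V(M,N')$ directly from the description of $V(M,N)$ as the orthogonal complement of $\sum_{N' \supsetneq N} \PP(M,N')$ inside $\PP(M,N)$. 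The mathematical content is essentially the same --- your relative statement for $N \neq 0$ is exactly what the paper extracts from the quotient modules, and your key identification $\ker\bigl(\projj_{N',N}|_{\PP(M,N)}\bigr) = \PP(M,N')^{\perp}$ in $\PP(M,N)$ is the paper's sublemma with $\PP(M,N)$ in place of $\PP(M)$, valid for the same adjointness reason. What your organization buys is self-containedness: you never need the compatibility of the construction $V(\cdot,\cdot)$ with passing to quotients (that the isomorphism $\PP(M/L) \cong \PP(M,L)$ carries $V(M/L, N/L)$ to $V(M,N)$), which the paper's transport step uses implicitly and only justifies later among the listed properties ($V(M,N) \cong V(M/N,0)$). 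The paper's organization, in exchange, sets up the quotient identification that it reuses elsewhere. Either way, using \autoref{lem:orthogonal} to upgrade the spanning statement to an orthogonal direct sum is common to both arguments, and your inductive step is sound: every $N'' \supsetneq N$ arises (e.g.\ with $N' = N''$), so the sum of the induction hypotheses is exactly $\sum_{N'' \supsetneq N} V(M,N'')$.
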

  
 \begin{proof}
 In order to prove the lemma, it is sufficient to show that $\PP(M)$ is
 spanned by the vector spaces $V(M,N)$. We prove the lemma by induction
 on $\#M$.
  
 \paragraph{Base case.} For the base case, assume $\#M=1$. Then the
 statement says that the $1$-dimensional space $\PP(0)$ is spanned by the
 $1$-dimensional space $V(0,0) \cong \PP(0)$, which is true.
  
 \paragraph{Induction step.} Now suppose that $M$ is an $R$-module of
 cardinality $\#M>1$ and suppose that the statement is true for all
 modules of cardinality less than $\#M$.  
  
 In particular, the statement of the lemma is true for all modules
 $M/L$, assuming that $L$ is not trivial. Hence we can assume:
 \[
 \PP(M/L) = \bigosum_{N \subset M/L} V(M/L , N).
 \]
  \noindent
 By the identification,
 \[
 \PP(M/L) \cong \PP(M,L) 
 \]
 it also follows that:
 \begin{equation}
 \label{eqn:protodecomposition}
 \PP(M,L) = \bigosum_{L \subset N \subset M} V(M,N).
 \end{equation}
  \noindent
 We conclude the proof using the following lemma, which is a direct
consequence of the definition of $\projj$ as an adjoint operator:
 \begin{sublemma}
\label{sublem:orthogonal}
 \[ \ker\Big(
 \PP(M) \xrightarrow{\projj_N} \PP(M,N)
 \Big) \] is the orthogonal complement of $\PP(M,N)$ in $\PP(M)$.
 \end{sublemma}
 Now it follows from the claim that $V(M,0)$ is the orthogonal complement
 in $\PP(M)$ of the vector space:
 \begin{equation}
 \label{eqn:span}
 span \left\{ \PP(M,N) \Big| N \neq 0 \right\}
 \end{equation}
 Hence every element in $\PP(M)$ can be expressed as a sum of an
 element of $V(M,0)$ and an element of (\ref{eqn:span}). But, by
 (\ref{eqn:protodecomposition}), every element in (\ref{eqn:span})
lies in the span of the vector spaces $V(M,N)$. Hence $\PP(M)$ is also
spanned by
 the vector spaces $V(M,N)$.
 \end{proof}

  \paragraph{Other properties of $V(M,N)$}
We list some other properties of the vector spaces $V(M,N)$:
\begin{itemize}
\item \paragraphinsertv
\item There is an isomorphism: $V(M,N) \cong V(M/N,0)$.
\item The vector spaces $V(M,N)$ and $\PP(M,N)$ are invariant under
translation by any element of $M$.
\end{itemize}

 \subsection{Fourier modules}
 \label{sec:fouriermodules}

In general, many of the vector spaces $V(M,N)$ are trivial. We are
interested in identifying the non-trivial $V(M,N)$. As $V(M,N) \cong
V(M/N,0)$, this amounts to describing those modules $L$ for which
$V(L,0) \neq 0$.

 \begin{definition}
 We say that a module $L$ is \textit{Fourier} if $V(L,0)$ is not $0$.
 \end{definition}
  
 Hence, we can rewrite (\ref{eqn:redundantdecomposition}) as:
 \begin{equation}
\label{eqn:fourierdecomposition}
 \PP(M)  = \bigosum_{
 \substack{ N \subset M \\
 M/N \text{ is Fourier}.
 }
 }
 V(M,N)
 \end{equation}

In the remainder of \S  \ref{sec:fouriermodules}, we will characterize all Fourier modules over $R$. \autoref{thm:mainfouriercharacterization} describes this characterization.
  
 \begin{lemma}
 \label{lem:fouriersubmodule}
 If $L$ is Fourier, and $L'$ is a sub-module of $L$, then $L'$ is Fourier.
 \end{lemma}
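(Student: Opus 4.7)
The plan is to produce a nonzero element of $V(L',0)$ directly from a nonzero element of $V(L,0)$ via the simple recipe: translate, then restrict.

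First I would verify that the restriction map $\PP(L) \to \PP(L')$, $\nu \mapsto \nu|_{L'}$, sends $V(L,0)$ into $V(L',0)$. To see this, let $\nu \in V(L,0)$ and let $N'$ be any nonzero $R$-submodule of $L'$. I need $\text{proj}_{N'}(\nu|_{L'}) = 0$ in $\PP(L')$, i.e.\ $\sum_{n \in N'} \nu(x+n) = 0$ for every $x \in L'$. But $N'$ is also a nonzero $R$-submodule of $L$, so the hypothesis $\nu \in V(L,0)$ yields $\text{proj}_{N'} \nu = 0$ in $\PP(L)$, which is precisely the vanishing $\sum_{n \in N'} \nu(x+n) = 0$ for every $x \in L$; restricting $x$ to $L'$ gives what we need. (The case $N' = L'$ is included, since $L'$ is itself a nonzero submodule of $L$.)

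Next I would arrange that the restriction is nonzero. By hypothesis there exists $\nu_0 \in V(L,0)$ with $\nu_0 \neq 0$, so we can pick $y \in L$ with $\nu_0(y) \neq 0$. Using the translation-invariance of $V(L,0)$ already recorded in the paper, the translate $\nu \defeq \tau_{-y}\nu_0$ lies in $V(L,0)$ and satisfies $\nu(0) = \nu_0(y) \neq 0$. Since $0 \in L'$, the restriction $\nu|_{L'}$ takes the nonzero value $\nu(0)$ at $0$, so $\nu|_{L'} \neq 0$. Combined with the previous step, $\nu|_{L'}$ is a nonzero element of $V(L',0)$, so $L'$ is Fourier.

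The only substantive point is the first step, and it requires nothing beyond the trivial observation that an $N'$-coset of an element of $L'$ is the same subset of $L$ whether viewed inside $L'$ or inside $L$. No commutative-algebra input about $R$ or $\omega$ is needed at this stage; the lemma will ultimately be a stepping-stone toward the later characterization of Fourier modules as submodules of $\omega$.
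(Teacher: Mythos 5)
Your proposal is correct and follows essentially the same route as the paper's proof: translate a nonzero element of $V(L,0)$ so that its restriction to $L'$ is nonzero, then observe that the defining vanishing conditions (projection to zero over every nonzero submodule of $L'$, each of which is also a submodule of $L$) are inherited by the restriction. Your write-up merely makes explicit the coset-averaging computation that the paper leaves implicit.
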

  
 \begin{proof}
 Indeed, suppose that $L$ is Fourier and suppose that $L'$ is a
 submodule of $L$.
  
 \begin{claim}
 There exists a non-zero element $\nu \in V(L,0)$ such that the
 restriction of $\nu$ to $L'$, \[ \nu |_{L'}, \] is non-zero.
 \end{claim}

Indeed, $V(L,0)$ contains a non-zero element because $L$ is
 Fourier. Because $V(L,0)$ is translation invariant, we can translate
this element so that its restriction to $L'$ is non-zero.
\sspace
 But $\nu$ lies in $\ker(\projj_N)$ for all $N$. In particular, $\nu$
 lies in $\ker(\projj_N)$ for all $N \subset L'$. It follows that
$\nu|_{L'}$ is a non-zero element of $V(L',0)$. Therefore, $L'$ is
Fourier.
 \end{proof}

\begin{lemma}
\label{lem:insertedlemma}
If a module $L$ has a unique non-zero minimal submodule, then $L$ is Fourier.
\end{lemma}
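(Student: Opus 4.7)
The plan is to use the uniqueness hypothesis to collapse the infinite intersection defining $V(L,0)$ down to a single kernel, and then exhibit an explicit non-zero element of that kernel.

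First, I would let $N_0$ denote the unique non-zero minimal submodule of $L$. Since $L$ is finite, every non-zero submodule $N' \subset L$ contains some minimal non-zero submodule, which by uniqueness must be $N_0$. Hence $N_0 \subset N'$ for every non-zero $N'$, so $N' + N_0 = N'$. Combined with the identity $\projj_{N'}\projj_{N_0} = \projj_{N' + N_0}$ noted earlier, this gives $\projj_{N'}\projj_{N_0} = \projj_{N'}$, and therefore $\ker(\projj_{N_0}) \subset \ker(\projj_{N'})$ for every non-zero $N'$. Taking intersections,
\[
V(L,0) \;=\; \bigcap_{0 \neq N' \subset L} \ker(\projj_{N'}) \;=\; \ker(\projj_{N_0}).
\]

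It then remains to show $\ker(\projj_{N_0}) \neq 0$. Since $N_0 \neq 0$, I can pick a non-zero element $n_0 \in N_0$ and take $\nu = \delta_0 - \delta_{n_0}$. The points $0$ and $n_0$ lie in the same $N_0$-coset, so averaging $\nu$ over $N_0$-cosets gives $\projj_{N_0}\nu = 0$, while $\nu$ is evidently non-zero in $\PP(L)$. This produces a non-zero element of $V(L,0)$, so $L$ is Fourier.

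There is no genuine obstacle here: the content of the argument is simply the observation that the uniqueness hypothesis forces every non-zero submodule to lie in the filter of submodules containing $N_0$, which exactly makes $\projj_{N_0}$ the dominant projection. After that, the conclusion follows from the multiplicativity $\projj_{N_1}\projj_{N_2} = \projj_{N_1+N_2}$ and a one-line construction of a non-zero signed measure in $\ker(\projj_{N_0})$.
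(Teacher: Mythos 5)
Your proof is correct. It rests on the same key observation as the paper's argument---that uniqueness of the minimal submodule $N_0$ forces every non-zero submodule $N'$ to contain $N_0$---but you execute it differently: the paper notes that the span of the subspaces $\PP(L,N')$ over non-zero $N'$ is contained in the proper subspace $\PP(L,N_0)$, and then invokes the sublemma identifying $\ker(\projj_{N'})$ with the orthogonal complement of $\PP(L,N')$ to conclude that $V(L,0)\neq 0$, without ever naming an element. You instead collapse the defining intersection directly, using $\projj_{N'}\projj_{N_0}=\projj_{N'+N_0}=\projj_{N'}$ to get $V(L,0)=\ker(\projj_{N_0})$, and then exhibit the explicit non-zero witness $\delta_0-\delta_{n_0}$ with $n_0\in N_0\setminus\{0\}$, which is visibly killed by averaging over $N_0$-cosets. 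Your route is slightly more elementary (it bypasses the adjoint/orthogonal-complement sublemma) and constructive, and as a bonus it identifies $V(L,0)$ exactly as $\ker(\projj_{N_0})$ rather than merely showing it is non-zero; the paper's phrasing has the advantage of staying entirely within the orthogonality formalism it has already set up and reused elsewhere.
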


\begin{proof}
Denote the minimal non-zero submodule as $N_0$. There exist elements
of $\PP(L)$ that are not constant on $N_0$-cosets. Therefore $\PP(M)$
is not contained in $\PP(L,N_0)$. Therefore, because $N_0$ is minimal,
$\PP(L)$ is not contained in the span of 
\begin{equation}
\label{eqn:span:1}
\PP(L,N) \hspace{0.5in} N \neq 0
\end{equation}
\noindent
Therefore, the orthogonal complement of (\ref{eqn:span:1}) in $\PP(L)$
is non-empty. Hence, by \autoref{sublem:orthogonal}, and the
definition of $V(\ccdot,\ccdot)$, $V(L,0)$ is non-empty and $L$ is
Fourier.

\end{proof}

\begin{remark}
The purpose of \S\S  \ref{sec:fouriermodulesoverk} and \ref{sec:fouriermodulesoverr} will be to show that the converse of \autoref{lem:insertedlemma} is true. We will use that $R$ is a local ring.  
\end{remark}

\begin{mysection}
Therefore there exists a non-zero element that is in the orthogonal
complement of $\PP(M,N_0)$.This element is orthognal to all ...
 Suppose $L$ is not Fourier. Then $\PP(L)$ is spanned by the vector
spaces:
\[
V(M,N) \hspace{0.5in} N\neq 0
\]
It follows that $\PP(L)$ is spanned by the vector spaces:
\[
\PP(M.N) \hspace{0.5in} N \neq 0 
\]
\end{mysection}
 
  \subsubsection{Fourier modules that are powers of $k$}
 \label{sec:fouriermodulesoverk}
 \renewcommand{\dim}{\,\text{dim}\,}

Recall that $k$ denotes the residue field of $R$.
 
 \begin{theorem}
 \label{thm:fouriermodulesoverk}
 A module $k^n$ is a Fourier module if and only if $n=0$ or $n=1$.
 \end{theorem}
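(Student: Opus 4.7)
The plan is to handle the small cases $n=0,1$ by direct inspection, and then show that for $n\geq 2$ the module $k^{n}$ fails to be Fourier by exhibiting every character of $k^{n}$ (viewed as an abelian group) inside some $\PP(k^{n},N)$ with $N$ a non-zero $R$-submodule. The proof will rest on the standard Fourier decomposition of $\PP(k^{n})$ together with a short linear-algebra lemma relating $\F_{p}$-hyperplanes in $k^{n}$ to $k$-lines.

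For $n=0$, $\PP(M)$ is one-dimensional and the only submodule of $M=0$ is itself, so $V(M,0)=\PP(M)\neq 0$. For $n=1$, $k$ is simple as an $R$-module, so the only non-zero submodule is $k$, and $k$ therefore has a unique non-zero minimal submodule; hence $k$ is Fourier by \autoref{lem:insertedlemma}. (Equivalently, $V(k,0)=\ker(\projj_{k})$ is the codimension-one subspace of measures of total mass zero.)

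For $n\geq 2$, by \autoref{sublem:orthogonal} it suffices to show that $\PP(k^{n})$ is spanned by $\{\PP(k^{n},N):N\neq 0\ \text{an}\ R\text{-submodule}\}$. I will decompose $\PP(k^{n})$ over $\Com$ into the one-dimensional character lines $V_{\chi}$ indexed by $\chi\in\Hom(k^{n},\Com^{*})$, and observe that $V_{\chi}\subset\PP(k^{n},N)$ precisely when $N\subset\ker(\chi)$. The trivial character already sits inside $\PP(k^{n},k^{n})$ with $k^{n}\neq 0$, so the only task is to show that for every non-trivial $\chi$, the $\F_{p}$-hyperplane $\ker(\chi)$ contains a non-zero $k$-subspace of $k^{n}$.

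For this last step, I will use the trace pairing $\mathrm{Tr}:k\otimes_{\F_{p}}k\to\F_{p}$, which identifies every $\F_{p}$-linear functional $\phi$ on $k^{n}$ with a tuple $(\gamma_{1},\dots,\gamma_{n})\in k^{n}$ via
\[
\phi\Bigl(\sum_{i}\alpha_{i}v_{i}\Bigr)=\sum_{i}\mathrm{Tr}(\alpha_{i}\gamma_{i}),
\]
after fixing a $k$-basis $v_{1},\dots,v_{n}$ of $k^{n}$. Then $\phi$ vanishes identically on the $k$-line through $\sum\beta_{i}v_{i}$ iff $\sum\beta_{i}\gamma_{i}=0$ in $k$, a single $k$-linear equation in $n\geq 2$ unknowns, which always admits a non-zero solution. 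Hence $\ker(\phi)\supset kv_{0}\neq 0$ for some $v_{0}$, completing the argument. The main (mild) obstacle is the bookkeeping to pass between $\F_{p}$-linearity of arbitrary characters and $k$-linearity of $R$-submodules; the trace-duality argument is the cleanest way to resolve it, and the whole step reduces to the elementary fact that one linear equation over $k$ in $\geq 2$ variables has a non-trivial solution.
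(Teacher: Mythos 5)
Your argument is correct, but for the main case $n\geq 2$ it takes a genuinely different route from the paper. The paper proves \autoref{thm:fouriermodulesoverk} by a dimension count: it first computes $\dim V(k,0)=\#k-1$ (\autoref{lem:vkdim}), and then, in \autoref{lem:notfourier}, compares $\dim \PP(k^n)=(\#k)^n$ with the contributions of the summands $V(k^n,N)$ in the orthogonal decomposition, using $\#\{N : k^n/N\cong k\}=\#Sur(k^n,k)/\#Aut(k)$ to see that the terms with $k^n/N\cong k^0$ or $k$ already exhaust the full dimension; positivity of dimensions then forces $V(k^i,0)=0$ for $i\geq 2$. You instead argue via \autoref{sublem:orthogonal} that $V(k^n,0)=0$ is equivalent to $\PP(k^n)$ being spanned by the subspaces $\PP(k^n,N)$ with $N\neq 0$, decompose $\PP(k^n)$ into character lines of the additive group, and show by trace duality that the kernel of every non-trivial character of $k^n$ contains a non-zero $k$-line when $n\geq 2$ (one $k$-linear equation in $n\geq 2$ unknowns); this is a sound and more explicit argument, relying on separability of $k/\F_p$ for non-degeneracy of the trace form, and it exhibits concretely why each character lies in some $\PP(k^n,N)$. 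The trade-offs: your route is more elementary and self-contained at this point (it does not need the counting identity), while the paper's count also delivers the dimension $\dim V(k,0)=\#k-1$, which it reuses later in computing $\dim V(\omega_I,0)=|\sfrac{R}{I}^{*}|$. Two minor bookkeeping points you should make explicit if you write this up: the character lines live in the complexification of the real space $\PP(k^n)$, so you should note that spanning after complexification implies spanning of the real space by the (real) subspaces $\PP(k^n,N)$; and the identification of $R$-submodules of $k^n$ with $k$-subspaces (since $\mathfrak{m}$ annihilates $k^n$) is what lets your $k$-line qualify as the required submodule $N$.
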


 In the rest of this section, we prove \autoref{thm:fouriermodulesoverk}.
 First of all, $0$ is always a Fourier module. For $n > 0$, this theorem
 will be proven by showing \autoref{lem:vkdim} and \autoref{lem:notfourier}.

 \begin{lemma}
\label{lem:vkdim}
 $V(k,0)$ has dimension $\#k -1$. In particular $k$ is Fourier.
 \end{lemma}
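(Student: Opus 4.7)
The plan is to exploit the fact that $k$ is a simple $R$-module, so the only submodules of $k$ are $0$ and $k$ itself. Consequently, the defining intersection for $V(k,0)$ reduces to a single condition, namely the kernel of the projection onto $\PP(k,k)$:
\[
V(k,0) = \ker\B( \PP(k) \xrightarrow{\projj_k} \PP(k,k) \B).
\]

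Next, I would unwind what $\projj_k$ does. Since $\PP(k,k)$ consists of measures constant on the single $k$-coset (i.e.\ constant measures on $k$), the adjoint characterization of $\projj_k$ forces $\projj_k \nu$ to be the constant measure whose value at each point is the average $\frac{1}{\#k}\sum_{x \in k} \nu(x)$. Its kernel is therefore exactly the hyperplane of signed measures on $k$ whose total mass vanishes. Since $\dim \PP(k) = \#k$, this hyperplane has dimension $\#k - 1$, giving $\dim V(k,0) = \#k - 1$. As $\#k \geq 2$, this is strictly positive, so $k$ is Fourier. The only step that requires any care is checking that the adjoint definition of $\projj_k$ indeed produces the averaging map, which is a one-line computation with the Euclidean inner product and so should not present any real obstacle.
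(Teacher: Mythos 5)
Your proof is correct. It takes a slightly different route from the paper's: the paper obtains $\dim V(k,0)=\#k-1$ by a dimension count inside the orthogonal decomposition $\PP(k)\cong V(k,k)\oplus V(k,0)$ (with $V(k,k)\cong V(0,0)\cong\PP(0)$ one-dimensional), which relies on the previously proven lemma that $\PP(M)=\bigoplus_{N\subset M}V(M,N)$; you instead argue directly from the definition of $V(k,0)$, observing that simplicity of $k$ as an $R$-module collapses the defining intersection to the single kernel $\ker\big(\PP(k)\xrightarrow{\projj_k}\PP(k,k)\big)$, and then identifying $\projj_k$ as the orthogonal projection onto constant measures, so that the kernel is exactly the hyperplane of signed measures of total mass zero. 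Your argument is more self-contained (it does not invoke the decomposition lemma) and has the added benefit of an explicit description of $V(k,0)$ as the mass-zero measures, whereas the paper's dimension-count pattern is the one it reuses later, e.g.\ in computing $\dim V(\omega_I,0)$. The one step you flag — that the adjoint of the inclusion $\PP(k,k)\hookrightarrow\PP(k)$ with respect to the Euclidean inner product is the averaging map — is indeed immediate, and the paper itself records this averaging description of $\projj_{N}$, so there is no gap.
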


 \begin{proof}
 We show the lemma using
 \[
 \PP(k) \cong V(0,0) \osum V(k,0)
 \]
 and comparing dimensions. $V(0,0) \cong \PP(0)$ has dimension $1$.
 $\PP(k)$ has dimension $\#k$. Therefore $V(k,0)$ has dimension $\#k-1$.
 \end{proof}
  
 \begin{lemma}
\label{lem:notfourier}
 $k^n$ is not a Fourier module over $k$, for any $n>1$. 
 \end{lemma}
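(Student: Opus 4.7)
My plan is to establish $V(k^n,0)=0$ for every $n\ge 2$ by strong induction on $n$, implemented as a dimension count in the orthogonal decomposition (\ref{eqn:redundantdecomposition}) applied to $M=k^n$. Over the field $R=k$, the $R$-submodules of $k^n$ are just its $k$-subspaces, and by the third property listed after the proof of (\ref{eqn:redundantdecomposition}), $V(k^n,N)\cong V(k^n/N,0)\cong V(k^{\,n-\dim N},0)$. Abbreviating $q:=|k|$ and $a_m:=\dim V(k^m,0)$, the decomposition (\ref{eqn:redundantdecomposition}) yields the numerical identity
$$q^n \;=\; \dim\PP(k^n) \;=\; \sum_{d=0}^{n}\binom{n}{d}_{q}\,a_{n-d},$$
where $\binom{n}{d}_{q}$ counts the $d$-dimensional $k$-subspaces of $k^n$. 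The base data are $a_0=\dim V(0,0)=1$ and $a_1=q-1$, the latter coming from \autoref{lem:vkdim}.

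For the induction step I would assume $a_m=0$ for $2\le m<n$. Then the only surviving terms in the above sum are those with $d\in\{0,\,n-1,\,n\}$, and using the standard identity $\binom{n}{1}_{q}(q-1)=q^n-1$ I obtain
$$q^n \;=\; a_n \;+\; \binom{n}{1}_{q}(q-1) \;+\; 1 \;=\; a_n+(q^n-1)+1 \;=\; q^n+a_n,$$
which forces $a_n=0$. The case $n=2$ already falls out of this same calculation with no intermediate range to discard, so it serves as the base of the induction.

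I do not foresee any substantive obstacle: the only ingredients are the structural isomorphism $V(M,N)\cong V(M/N,0)$, which is already in hand, and the routine Gaussian-binomial identity $\binom{n}{1}_q=(q^n-1)/(q-1)$. I remark that this dimension-counting argument neatly sidesteps a direct Fourier-analytic examination of $\widehat{k^n}$, which would be somewhat delicate when $k$ is a proper extension of its prime field, since the kernels of additive characters of $k^n$ need not be $k$-subspaces in that case.
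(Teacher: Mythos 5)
Your proof is correct and is essentially the paper's own argument: both rest on the dimension count of the orthogonal decomposition (\ref{eqn:redundantdecomposition}) together with $V(k^n,N)\cong V(k^n/N,0)$ and $\dim V(k,0)=\#k-1$. The only (cosmetic) difference is organizational: you run a strong induction on $n$ to kill the intermediate components, whereas the paper observes in one step that the $i=0,1$ terms already account for the full dimension $(\#k)^n$ (counting codimension-one subspaces via $\#Sur(k^n,k)/\#Aut(k)$ rather than a Gaussian binomial), so all remaining components, including $V(k^n,0)$, must vanish by nonnegativity.
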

  
 \begin{proof}
 Let $n>1$. Again we use the decomposition:
 \[
 \PP(k^n) \cong \bigosum_{
 N \subset k^n 
 }
 V(k^n,N)
 \]
 Recall that $V(k^n,N) \cong V(k^n/N,0)$. Comparing dimensions, we find
 \[
 (\#k)^n=\dim \PP(k^n) =\sum_{i=0}^{i=n} \# \{ N \subset k^n | k^n /
N \cong k^i
 \} \dim V(k^i,0)
 \] 
 It is now sufficient to show that
 \begin{equation}
 \label{eqn:sumonetwo}
 (\#k)^n=\sum_{i=0}^{i=1} \# \{ N \subset k^n | k^n /N \cong k^i \}\dim V(k^i,0)
 \end{equation}
 But (\ref{eqn:sumonetwo}) can be rewritten as: 
 \[
 1 + (\#k - 1)  \#\{  N \subset k^n  |  k^n /N \cong k   \}
 \]
 Let $\#Sur(k^n,k)$ denote the number of surjective homomorphisms
from $k^n$ to $k$ and let $\# Aut(k) $ denote the number of
automorphisms of $k$ as an $R$-module. The preceding expression becomes:
 \[
 1+ (\#k -1) \frac{\#Sur(k^n,k)}{\# Aut(k) } = 1+\#Sur(k^n,k) = (\#k)^n  
 \]
 \end{proof}
\noindent  
 This concludes the proof of \autoref{thm:fouriermodulesoverk}.

  \subsubsection{Fourier modules over $R$}
 \label{sec:fouriermodulesoverr}
 Recall that $R$ is a finite local ring with residue field $k$.
 In this section, we will determine all Fourier modules over $R$.
 We will need the fact that every finite local ring has a dualizing module \cite[Proposition 21.2]{Eisenbud}. We denote the dualizing module of $R$ as $\omega$.
  
 \begin{theorem}
 \label{thm:mainfouriercharacterization}
 Every Fourier module over $R$ is a submodule of $\omega$.
 \end{theorem}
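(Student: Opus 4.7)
The plan is to combine the restriction-to-submodule property of \autoref{lem:fouriersubmodule} with the fact that $k^2$ is not Fourier (\autoref{thm:fouriermodulesoverk}) to show that any Fourier module must have simple socle, and then invoke the structure of $\omega$ as the injective hull of $k$ to embed such a module into $\omega$.

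The first step is to show that if $L$ is Fourier, then $L$ has at most one non-zero minimal submodule. Suppose for contradiction that $L$ contains two distinct non-zero minimal submodules $N_1, N_2$. Each $N_i$ is simple, hence $N_i \cong k$. Minimality forces $N_1 \cap N_2 = 0$, so $N_1 + N_2 = N_1 \oplus N_2 \cong k^2$ sits inside $L$ as a submodule. By \autoref{lem:fouriersubmodule}, $k^2$ would then be Fourier, contradicting \autoref{thm:fouriermodulesoverk}. The case $L = 0$ is trivial, so from now on I may assume $L$ has a unique non-zero minimal submodule $S$, which is necessarily isomorphic to $k$.

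The second step uses the classical fact that over a finite (hence Artinian) local ring $R$, the dualizing module $\omega$ is the injective hull of $k$: it is an injective $R$-module with $\mathrm{soc}(\omega) \cong k$, and every non-zero submodule of $\omega$ meets this socle non-trivially. To construct an embedding $\phi: L \hookrightarrow \omega$, I would fix an isomorphism $S \xrightarrow{\sim} \mathrm{soc}(\omega)$ and extend it to a morphism $\phi: L \to \omega$ via the injectivity of $\omega$. The kernel of $\phi$ intersects $S$ trivially; but any non-zero submodule of $L$ contains $S$ (since $S$ is the unique minimal non-zero submodule), so $\ker \phi = 0$ and $\phi$ is the desired embedding.

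The main obstacle is conceptual rather than technical: bridging the measure-theoretic definition of "Fourier" to the algebraic property "simple socle." Once the bridge is built via the $k^2$-contradiction argument above, the embedding into $\omega$ is a standard and short application of the defining property of an injective hull over an Artinian local ring, and the proof is complete.
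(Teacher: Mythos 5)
Your proposal is correct, and it is a genuinely different second half from the paper's. Both proofs share the same crucial first step: a Fourier module must have a simple socle. You derive this by exhibiting $k^2$ inside any module with two distinct minimal non-zero submodules and invoking \autoref{lem:fouriersubmodule} together with \autoref{thm:fouriermodulesoverk}; the paper phrases the same fact as $\Hom(k,M)\cong k$ (the socle viewed as $\Hom(k,M)$), but the content is identical. Where you diverge is in the embedding step. The paper takes the Matlis-duality route: from $\Hom(k,M)\cong k$ it passes to $\Hom(D(M),D(k))\cong k$, rewrites this as $D(M)\otimes k\cong k$, applies Nakayama to get $D(M)\cong R/I$, and then double-dualizes to conclude $M\cong D(R/I)=\omega_I\subset\omega$. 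You instead use the characterization of $\omega$ as the injective hull of $k$: fix an isomorphism of the socle of $L$ with the socle of $\omega$ and extend by injectivity of $\omega$ to a map $L\to\omega$, which is forced to be injective because every non-zero submodule of the finite module $L$ contains the unique minimal submodule. Both steps rely on the same underlying structure (that $\omega$ is the Matlis dual module, equivalently the injective hull $E(k)$), but they use it differently: the paper's duality argument hands you an explicit identification $M\cong\omega_I$ with the ideal $I$ exhibited, which is convenient for the parametrization by ideals it uses elsewhere; your injective-hull argument is more direct and perhaps more conceptually transparent, but gives the embedding without naming which $\omega_I$ you land on. Either route completes the proof.
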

  
 \begin{proof}
 The zero module is Fourier. Now suppose $M$ is a non-zero Fourier module.
 By \autoref{lem:fouriersubmodule}, every submodule of $M$ must be a
 Fourier module. In particular
 \[
 Hom(k,M)
 \] 
 is a Fourier module. This module is non-zero because $M$ is non-zero,
 and it is isomorphic to $k^n$ for some $n$. By
\autoref{thm:fouriermodulesoverk}, 
 \begin{equation}
 \label{eqn:simplemodule}
 Hom(k,M) \cong k.
 \end{equation}
 Now, for an $R$-module $L$ denote $Hom(L,\omega)$ as $D(L)$. $\omega$
 is the dualizing module, hence $D( \ccdot)$ is a dualizing functor.
 Therefore, from (\ref{eqn:simplemodule}), we get:
 \begin{equation}
 \label{eqn:kisomorphism}
 Hom(D(M),D(k)) \cong k
 \end{equation}
 But $D(k) \cong k$, hence (\ref{eqn:kisomorphism}) implies
 \[
 D(M) \otimes k \cong k
 \]
 It is now a consequence of Nakayama's lemma that $D(M) \cong R/I$ for some
 ideal $I \subset R$. Therefore, 
 \[
 M \cong D(D(M)) \cong D(R/I) \cong Hom( R/I , \omega ).
 \]
 Therefore $M$ is a submodule of $\omega$.
 \end{proof}

 In the sections that follow, we denote $ Hom(R/I, \omega) $ as $\omega_I$.

 \begin{remark}
 We have shown that every Fourier module must be a submodule of
 $\omega$. The converse is also true. Indeed, $\omega$ has a minimal
 non-zero submodule. Therefore, by \autoref{lem:insertedlemma},
$\omega$ is Fourier, and by \autoref{lem:fouriersubmodule}, all
submodules
 of $\omega$ are Fourier.
 \end{remark}
  \subsection{Properties of $\omega_I$}
 \label{sec:propertiesofomegai}
  
 In this section, we list some standard properties of $\omega_I$:
  
 \begin{itemize}
 \item[(A)] $I \rightarrow \omega_I$ is an inclusion-reversing bijection
 between submodules of $\omega$ and submodules of $R$.
 \item[(B)] $I = ann(\omega_I)$.
 \item[(C)]  $ |\omega_I| = |R/I| $.
 \item[(D)] The inclusion $ R/I \hookrightarrow Hom( \omega_I , \omega_I ) $ is an isomorphism.
 \item[(E)] $\omega_I$ is the dualizing module for the finite ring $R/I$.
 \item[(F)] No two distinct submodules of $\omega$ are isomorphic.
 \end{itemize}
 \noindent
 We will only give a proof for the last statement.
  
 \begin{proof} (of (F))
 The submodules of $\omega$ are in bijection with submodules of $R$.
 Thus, we must prove that if
 \[
 \omega_I \cong  \omega_{I'},
 \] 
 then $I = I'$. But $ \omega_I \cong \omega_{I'}$ implies that $
 ann(\omega_I) \cong ann(\omega_{I'})$ and hence $I=I'$.
 \end{proof}
  
 \subsection{Relation with usual Fourier analysis}
  
 We can recast the decomposition of $\PP(M)$ into the form (\ref{eqn:decomposition:first}), given in the introduction and connect it to the decomposition arising in usual Fourier analysis.

 First of all, we introduce an equivalence relation:
 \begin{definition}
 Suppose $\chi, \chi' \in Hom(M,\omega)$. Then we write $\chi \sim
\chi'$ if and
 only if $im(\chi) \cong im(\chi')$.
 \end{definition}
  
 This allows us to rewrite the decomposition
(\ref{eqn:fourierdecomposition}) as 
 \begin{equation}
 \label{eqn:chidecomposition}
 \PP(M) \cong \bigosum_{
\chi \in Hom(M,\omega)/ \sim
}
V(M,ker(\chi)).
 \end{equation}

 \begin{lemma}
 \label{lem:chieqclass}
 Suppose $\chi, \chi' \in Hom(M,\omega)$ and $\chi \sim \chi'$. Then
there exists $r \in R^{*}$ such that $\chi' = r \chi $.
 \end{lemma}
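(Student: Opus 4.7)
The plan is to leverage the properties of $\omega$ listed in \S\ref{sec:propertiesofomegai}, especially the endomorphism identity $R/I \cong \text{End}_R(\omega_I)$ of property (D) and the rigidity of submodules of $\omega$ afforded by property (F).

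First, by property (F), no two distinct submodules of $\omega$ are isomorphic, so the hypothesis $\im(\chi) \cong \im(\chi')$ upgrades to the equality $\im(\chi) = \im(\chi')$ as submodules of $\omega$; call this common image $\omega_I$. This reduces the problem to showing that any two surjections $\chi, \chi' : M \twoheadrightarrow \omega_I$ differ by multiplication by a unit of $R$.

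Next I would establish that $\ker \chi = \ker \chi'$, which I regard as the main technical obstacle. Granting this for a moment, both $\chi$ and $\chi'$ descend to isomorphisms $\bar\chi, \bar\chi' : M/N \xrightarrow{\sim} \omega_I$, where $N$ denotes their common kernel, and the composition $\bar\chi' \circ \bar\chi^{-1}$ is an element of $\text{Aut}_R(\omega_I)$. By property (D), $\text{Aut}_R(\omega_I) \cong (R/I)^*$; because $R$ is local, any unit of $R/I$ lifts to a unit $r \in R^*$, and unwinding the identifications yields $\chi' = r \chi$ as maps $M \to \omega$.

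To handle the equality of kernels, the natural strategy is to pass to the Matlis dual by applying $D_I := \Hom_R(-, \omega_I)$: a surjection $M \twoheadrightarrow \omega_I$ dualizes to an injection $R/I \hookrightarrow D_I(M)$, so the claim becomes that the cyclic $R/I$-submodule of $D_I(M)$ picked out by this injection depends only on the image $\omega_I$ of the original surjection, not on the surjection itself. Using property (E), which identifies $\omega_I$ with the dualizing module of the local ring $R/I$, one can attempt to recover this submodule canonically from the structure of $D_I(M)$; this canonicity is the delicate step the argument hinges on.
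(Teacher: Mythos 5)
Your instinct that the equality $\ker\chi = \ker\chi'$ is the crux is correct, and everything you do after granting it --- forming the automorphism $\bar\chi'\circ\bar\chi^{-1}$ of $\omega_I$, invoking property (D) to identify $\text{Aut}_R(\omega_I)$ with $(R/I)^{*}$, and lifting a unit of $R/I$ to a unit of $R$ using that $R$ is local --- is exactly the paper's argument. The gap is that the strategy you sketch for deducing $\ker\chi=\ker\chi'$ from $\im\chi\cong\im\chi'$ cannot succeed, because the implication is simply false: take $R=k$ a field (so $\omega=k$), $M=k^2$, $\chi(x,y)=x$, $\chi'(x,y)=y$. Both have image $k$, so $\im\chi\cong\im\chi'$, but their kernels are the two distinct coordinate lines and $\chi'$ is not a unit multiple of $\chi$.

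That counterexample shows the lemma is false if $\sim$ is read literally as ``$\im\chi\cong\im\chi'$.'' The paper's own proof contains the same silent leap: from ``$\im\chi=\im\chi'=\omega_I$'' it asserts ``Hence there exists an isomorphism $\sigma$ of $\omega_I$ such that $\chi'=\sigma\chi$,'' which already presupposes a common kernel. The relation $\sim$ that makes both the lemma and the decomposition \eqref{eqn:chidecomposition} correct is $\ker\chi=\ker\chi'$ (as submodules of $M$): the decomposition indexes the summands $V(M,\ker\chi)$ by $\sim$-classes, which forces $\ker\chi$ to be a well-defined function of the class, and it must agree with the kernel-indexed decomposition \eqref{eqn:fourierdecomposition}. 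Under that reading, kernel equality is a hypothesis rather than something you need to prove, equality of images then follows from (F), and the rest of your argument closes the proof and coincides with the paper's. So your proposal is on the right track, but your plan to recover the kernel from the image via Matlis duality is chasing a statement that is not true; you should instead flag the mismatch between the stated definition of $\sim$ and what the lemma and decomposition actually require.
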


 \begin{Corollary}
\label{cor:chidecomposition}
 Hence, we can rewrite (\ref{eqn:chidecomposition}) as:
 \begin{equation}
 \label{eqn:chidecompositiontwo}
 \PP(M) \cong \bigosum_{\chi \in Hom(M, \omega)/R^{*}} V(M,\chi)
 \end{equation}
  where we write $V(M,\chi)$ to denote $V(M,ker(\chi))$.
 \end{Corollary}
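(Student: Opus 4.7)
The plan is to derive the corollary as a straightforward consequence of \autoref{lem:chieqclass} by reindexing the sum in \eqref{eqn:chidecomposition}. The key observation is that the equivalence relation $\chi \sim \chi'$ (defined by $\im(\chi) \cong \im(\chi')$) coincides on $\Hom(M, \omega)$ with the equivalence of being in the same orbit under the $R^{*}$-action by post-composition with scalar multiplication.

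First I would verify the two directions of this coincidence. One direction is exactly the content of \autoref{lem:chieqclass}: if $\chi \sim \chi'$, then there exists $r \in R^{*}$ with $\chi' = r\chi$. For the converse, if $\chi' = r \chi$ for some $r \in R^{*}$, then since $r$ acts as an $R$-module automorphism of $\omega$, we have $\im(\chi') = r \cdot \im(\chi) = \im(\chi)$, so in particular $\im(\chi) \cong \im(\chi')$.

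Second, I would check that the assignment $\chi \mapsto V(M, ker(\chi))$ descends to a well-defined function on the quotient $\Hom(M,\omega)/R^{*}$. This is immediate: if $r \in R^{*}$, then multiplication by $r$ is injective on $\omega$, so $ker(r\chi) = ker(\chi)$, and therefore $V(M, ker(r\chi)) = V(M, ker(\chi))$.

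Combining these observations, the sum $\bigosum_{\chi \in \Hom(M,\omega)/\sim} V(M,ker(\chi))$ of \eqref{eqn:chidecomposition} has exactly the same index set and the same summands as $\bigosum_{\chi \in \Hom(M,\omega)/R^{*}} V(M,\chi)$, which gives \eqref{eqn:chidecompositiontwo}. The substantive work lies entirely in \autoref{lem:chieqclass}; I would expect its proof to use the self-duality $\Hom(\omega,\omega) \cong R$ of the dualizing module (property (D) of \S\ref{sec:propertiesofomegai}) together with the fact that over a local ring $R$ units of any quotient ring $R/I$ lift to units of $R$, in order to promote an abstract isomorphism of images into an explicit post-composition by an element of $R^{*}$.
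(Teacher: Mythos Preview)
Your proposal is correct and matches the paper's approach exactly: the paper treats the corollary as an immediate reindexing once \autoref{lem:chieqclass} is known, and you have spelled out precisely the two details (the converse direction of the equivalence, and well-definedness of $\chi \mapsto V(M,\ker\chi)$ on $R^{*}$-orbits) that make this reindexing legitimate.

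One small remark on your closing speculation about the proof of \autoref{lem:chieqclass}: the paper uses property (D) as you anticipate, but the first and crucial step is property (F) of \S\ref{sec:propertiesofomegai} --- that no two distinct submodules of $\omega$ are isomorphic --- which upgrades $\im(\chi) \cong \im(\chi')$ to $\im(\chi) = \im(\chi')$ as submodules of $\omega$. Only then does one get an automorphism $\sigma$ of $\omega_I$ with $\chi' = \sigma\chi$, to which (D) applies. Your mention of lifting units from $R/I$ to $R$ is the correct final step.
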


 \begin{proof} (of \autoref{lem:chieqclass})
 $im(\chi)$ and $im(\chi)$ are submodules of $\omega$. If they are
 isomorphic, then by \ref{sec:propertiesofomegai}.F, they must be the same
 submodule of $\omega$, say $\omega_I$. Hence there exists an
 isomorphism $\sigma$ of $\omega_I$ such that $\chi' = \sigma \chi$.
 But by \ref{sec:propertiesofomegai}.D, the only homomorphisms from $\omega_I$ to
 $\omega_I$ are given by multiplication by $R/I$. Hence, the only
 isomorphisms from $\omega_I$ to $\omega_I$ are given by multiplication
 by $\sfrac{R}{I}^{*}$, or alternatively by multiplication by
$R^{*}$. Therefore $\sigma$ must be of this form.
 \end{proof}

 \begin{remark}
 We note again that $\chi$ is an equivalence class of homomorphisms.
 The number of homomorphisms in the equivalence class is determined by
 $im(\chi)$. By the preceding proof, if $im(\chi) = \omega_I$, then
the number of elements in the equivalence class is $\left|
\sfrac{R}{I}^{*} \right|$.
 \end{remark}

 \paragraph{*The dimension of $V(M,\chi)$}
  
 The results in the remainder of this section are not necessary for
the sequel. We include them for completeness.
\noindent
First, we recall that
\[
V(M,\chi)=
V(M,ker(\chi))\cong
V(M/ker(\chi),0)\cong
V(im(\chi),0)
\]
and $im(\chi)$ is of the form $\omega_I$ for some $I$.
 
 \begin{lemma}
 \[
 \dim V(\omega_I,0) =  |\sfrac{R}{I}^{*}|
 \]
 \end{lemma}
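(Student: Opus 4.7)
\begin{hproof}
The plan is to complexify and reduce to ordinary Fourier analysis on the finite abelian group $\omega_I$, identify $V(\omega_I,0)\otimes\mathbb{C}$ as the span of the Fourier modes attached to certain characters, and then count those characters.

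Let $\hat G=\Hom_{\mathbb{Z}}(G,\mathbb{C}^{*})$ denote the Pontryagin dual of a finite abelian group $G$. Standard Fourier theory gives a basis of $\PP(\omega_I)\otimes\mathbb{C}$ indexed by $\psi\in\hat{\omega_I}$: the character measure $e_{\psi}$ assigns mass $\psi(m)$ to $m\in\omega_I$. A direct calculation using the defining formula $\projj_N \nu(m)=|N|^{-1}\sum_{n\in N}\nu(m+n)$ together with character orthogonality yields
\[
\projj_N(e_{\psi})=\begin{cases} e_{\psi},& N\subset\ker\psi,\\ 0,& \text{otherwise.}\end{cases}
\]
Intersecting $\ker \projj_N$ over all non-zero $R$-submodules $N\subset\omega_I$ therefore gives
\[
V(\omega_I,0)\otimes\mathbb{C}\;=\;\operatorname{span}\bigl\{\,e_{\psi}\,:\,\ker\psi\text{ contains no non-zero }R\text{-submodule of }\omega_I\,\bigr\}.
\]

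Next, I would simplify this condition algebraically. By \S\ref{sec:propertiesofomegai}(A), the $R$-submodules of $\omega_I$ are in inclusion-reversing bijection with the ideals $J\supset I$; since $R$ is local, $\mathfrak{m}$ is the unique maximal such proper ideal, so $\omega_I$ has a unique minimal non-zero $R$-submodule $\omega_{\mathfrak{m}}$. Being annihilated by $\mathfrak{m}$ and of cardinality $|R/\mathfrak{m}|=|k|$, this submodule is isomorphic to $k$. Hence ``$\ker\psi$ contains no non-zero $R$-submodule'' is equivalent to $\psi|_{\omega_{\mathfrak{m}}}\neq 1$.

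Finally, by Pontryagin duality applied to the inclusion $\omega_{\mathfrak{m}}\hookrightarrow\omega_I$, the restriction map $\hat{\omega_I}\twoheadrightarrow\hat{\omega_{\mathfrak{m}}}$ is surjective with fibres of size $|\omega_I|/|k|$, so the number of $\psi$ non-trivial on the socle is
\[
|\omega_I|-\tfrac{|\omega_I|}{|k|}\;=\;|R/I|-|\mathfrak{m}/I|\;=\;\bigl|(R/I)^{*}\bigr|,
\]
the last equality holding because $R/I$ is local with maximal ideal $\mathfrak{m}/I$. Since $\dim_{\mathbb{R}}V(\omega_I,0)=\dim_{\mathbb{C}}V(\omega_I,0)\otimes\mathbb{C}$, this proves the lemma. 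The main technical step is the Fourier identification of the $V(\omega_I,0)$-modes in the first paragraph; once that and the unique-minimal-submodule fact are in hand, everything else is a routine cardinality count.
\end{hproof}
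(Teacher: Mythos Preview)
Your argument is correct, and it takes a genuinely different route from the paper's. The paper proves the lemma by induction on $|R/I|$: the base case is \autoref{lem:vkdim}, giving $\dim V(k,0)=\#k-1$, and for the induction step one expands $\dim\PP(\omega_I)=|\omega_I|$ via the decomposition $\PP(\omega_I)\cong\bigosum_{\chi}V(\omega_I,\ker\chi)$, applies the inductive hypothesis to every summand with $\im\chi\subsetneq\omega_I$, and solves for the single remaining term $\dim V(\omega_I,0)$. Your approach instead passes to the classical Fourier basis of $\PP(\omega_I)\otimes\mathbb{C}$, identifies $V(\omega_I,0)\otimes\mathbb{C}$ explicitly as the span of those characters whose kernel misses the socle $\omega_{\mathfrak{m}}$, and then counts them directly. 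Your route is more concrete---it actually names the Fourier modes that live in $V(\omega_I,0)$ rather than inferring their number by subtraction---and it sidesteps the induction entirely; the price is that it steps outside the paper's deliberately self-contained framework, which characterizes the spaces $V(M,N)$ without invoking character theory until the very end. The paper's inductive argument, by contrast, stays purely within the orthogonal-decomposition machinery of \S\ref{sec: decomposing measures} and so generalizes more readily to settings where a Pontryagin-type basis is less convenient.
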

  
 \begin{proof}
 This can be proven by induction on $|\sfrac{R}{I}|$. Firstly, the
 statement holds for the maximal ideal because, by \autoref{lem:vkdim},
 \[
 \dim V(k,0) = \#k -1.
 \]
 For the induction step, we note that 
 \[
 \PP(R/I) = \bigosum_{\chi \in Hom(\omega/I, \omega)/R^{*}}V(\omega,
 ker(\chi)) 
 \]
 Hence,
 \[
 \dim \PP(R/I) =
 \sum_J \frac{
 \#\{ \chi \in Hom(\omega_I , \omega) | im(\chi) = \omega_J\}
 }
 {
 |\sfrac{R}{J}^{*}|
 } 
 \dim V(\omega_J,0)
 \]
 The image of $\omega_I$ is either isomorphic to $\omega_I$ or has
 cardinality strictly smaller than $\omega_I$.
 Using the inductive hypothesis, we can therefore conclude that the sum
 on the right can be rewritten as:
  
 \begin{equation}
 \label{eqn:sumone}
 \frac{\#\{ Hom(\omega_I, \omega) | im(\chi) = \omega_I \}
 }{|\sfrac{R}{I}^{*}|} \dim V(\omega_I,0)+
\end{equation}
\[
 +\sum_{J \neq I} \#\{ Hom(\omega_I, \omega) | im(\chi) = \omega_J \}
 \]
 whereas the left hand side is
 \[
 \#\omega_I=\#Hom(\omega_I,\omega)=
\]
 \begin{equation}
 \label{eqn:sumtwo}
 =\sum_J \# \{ \chi \in Hom(\omega_I, \omega)| im(\chi) = \omega_J\}
 \end{equation}
 Comparing (\ref{eqn:sumone}) and (\ref{eqn:sumtwo}), we find that we
 must have 
 \[
 \dim V(\omega_I,0) = \left| \sfrac{R}{I}^{*} \right|
 \]
 \end{proof}
  
 \paragraph{Comparison with the classical case}
  
 Suppose that we have a $\Z/ p^N$ module $G$. Then, the usual
 Fourier decomposition decomposes $\PP(G)$ into one-dimensional
 components parametrized by $Hom(G, \mathbb{C}^{*}) \cong
 Hom(G,\Z/p^N)$. If we group together the components corresponding to
 homomorphsms that have the same kernel, then we recover the
 decomposition (\ref{eqn:chidecompositiontwo}): 
 \[
 \PP(G) \cong \bigosum_{\chi \in \Hom(G,\Z/p^N) \big/ ( \sfrac{\Z}{p^N}^{*} )}
 V(G, ker(\chi))
 \] 
  \subsection{Isotypic Fourier components}
 \label{sec:isotypic}
  
 \newcommand{\commentone}{Try to define the term Fourier Component at
some earlier stage}
  
 In this section we will be interested in the space spanned by a subset
 of the $V(M,N)$. 
 Namely, let us choose a Fourier module. This module is necessarily of
 the form $\omega_I$. We are interested in explicitly describing
  
 \newcommand{\subdecomposition}
 {
\bigosum_{
 \substack{
 N \subset M
 \\
 M/N \cong \omega_I
 }
 } V(M,N)
}

\begin{equation}
\label{eqn:subdecomposition}
\subdecomposition
\end{equation}
  
 \begin{remark}
 If $M/N \cong \omega_I$, then $M/N$ is annihilated by $I$. Therefore, $IM
 \subset N$. It follows that
\[
\subdecomposition \in \PP(M,IM).
\]
\textit{In fact, this is all we need for the sequel.} In the rest of this
 section, for completeness, we give a more precise description of the
 vector space (\ref{eqn:subdecomposition}). 
 \end{remark}
  
 To formulate our theorem, we need some preliminary definitions:
  
 \paragraph{*The spaces $W(M,IM)$}
As mentioned in the preceding remark, the rest of this section is not
necessary for the sequel.

 \begin{definition}
 Define $\PP(M,IM)$ as before, as the set of measure on $M$ that are
 constant on $IM$-cosets. Let $W(M,IM) \subset W(M,JM)$ be the space of
 signed measures on $\PP(M)$ such that:
 \begin{itemize}
 \item Each element of $W(M,IM)$ is constant on $IM$-cosets.
 \item Each element of $W(M,IM)$ lies in the orthogonal complement of
 $\PP(M,JM)$ for all ideals $J$ that strictly contain $I$.
 \end{itemize} 
 \end{definition}

 \begin{lemma}
 $W(M,IM)$ and $W(M,I'M)$ are orthogonal if $I \neq I'$.
 \end{lemma}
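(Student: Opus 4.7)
The plan is to imitate, almost verbatim, the proof of Lemma~\ref{lem:orthogonal}, which established $V(M, N_1) \perp V(M, N_2)$ for $N_1 \neq N_2$. The roles of $N_1$ and $N_2$ will be played by the submodules $IM$ and $I'M$, and the key algebraic input is that sums of these submodules correspond to sums of ideals, i.e.\ $IM + I'M = (I+I')M$. Equipped with this identity, the composition law $\projj_{N_1}\projj_{N_2} = \projj_{N_1+N_2}$ for the projection operators translates directly into a statement at the level of ideals.

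Concretely, given $\nu_1 \in W(M, IM)$ and $\nu_2 \in W(M, I'M)$, I would set $J = I + I'$. Since $\nu_1 \in \PP(M, IM)$ and $\nu_2 \in \PP(M, I'M)$, the projection operators act as the identity on the respective measures, and self-adjointness of $\projj$ together with the composition law yields
\[
\big\langle \nu_1, \nu_2 \big\rangle
= \big\langle \projj_{I'M}\,\projj_{IM} \nu_1, \nu_2 \big\rangle
= \big\langle \projj_{JM} \nu_1, \nu_2 \big\rangle
= \big\langle \nu_1, \projj_{JM} \nu_2 \big\rangle.
\]
Next, I would observe that $I \neq I'$ forces at least one of $J \supsetneq I$ or $J \supsetneq I'$, since otherwise $J = I = I'$. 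In whichever case holds, the corresponding $\projj_{JM}\nu_k$ vanishes: by the very definition of $W(M, IM)$ (resp.\ $W(M, I'M)$), $\nu_k$ is orthogonal to $\PP(M, JM)$ whenever $J$ strictly contains the associated ideal, and by Sublemma~\ref{sublem:orthogonal} this orthogonality is equivalent to lying in $\ker(\projj_{JM})$. Either factorisation of $\langle \nu_1, \nu_2 \rangle$ then collapses to zero.

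There is essentially no substantive obstacle here: the analytic heavy lifting has already been absorbed into Lemma~\ref{lem:orthogonal} and into the adjoint characterisation of $\projj$ furnished by Sublemma~\ref{sublem:orthogonal}. The only point requiring care is the bookkeeping that translates the ideal-level hypothesis $I \neq I'$ into the submodule-level statement about $IM, I'M \subset M$; this reduces to the compatibility $IM + I'M = (I+I')M$, which is immediate from the definition of $IM$ as the $R$-span of $\{rm : r \in I,\ m \in M\}$.
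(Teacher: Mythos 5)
Your proof is correct and follows essentially the same route as the paper: the chain of equalities using $\projj_{IM}\projj_{I'M} = \projj_{(I+I')M}$ and the definition of $W(M, \cdot)$ as the orthogonal complement of $\PP(M, JM)$ for $J$ strictly containing the relevant ideal, exactly mirrors the paper's proof, which itself imitates Lemma~\ref{lem:orthogonal}. The only cosmetic difference is that you explicitly push $\projj_{JM}$ onto whichever factor has $J$ strictly containing its ideal, whereas the paper applies $\projj_{(I+I')M}$ to both sides before invoking the vanishing; both are routine uses of self-adjointness and yield the identical conclusion.
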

  
 \begin{proof}
 The proof is analogous to the proof of \autoref{lem:orthogonal}.
 Define $\projj_{JM}$ to be the operation that averages a measure on
 $M$ over $JM$-cosets. Suppose that $w \in W(M,IM)$ and $w' \in
 W(M,I'M)$. We have 
 \[
 \Big<w,w'\Big> = 
 \Big< \projj_{IM} w , w'
 \Big> =
 \Big< \projj_{I'M} \projj_{IM} w, w'
 \Big> =
\]
\[
=
 \Big< \projj_{(I+I')M} w , w'
 \Big> =
 \Big< \projj_{(I+I')M} w, \projj_{(I+I')M} w'
 \Big> 
 \] 
 The last expression must be $0$ unless $I=I'$, by the same argument
as in the proof of \autoref{lem:orthogonal}.
 \end{proof}

 \begin{lemma}
 \[
 \PP(M) \cong \bigosum_{I} W(M,IM)
 \]
 \end{lemma}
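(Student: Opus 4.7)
The plan is to reduce the lemma to the decomposition $\PP(M) = \bigosum_N V(M,N)$ (over submodules $N \subseteq M$ with $M/N$ Fourier) established earlier, by showing that each summand $V(M,N)$ is contained in an appropriate $W(M,IM)$. Combined with the orthogonality of distinct $W(M,IM)$ proved in the preceding lemma, this will immediately yield the claimed direct sum decomposition.

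First I would invoke \autoref{thm:mainfouriercharacterization} and property (F) from \S\ref{sec:propertiesofomegai} to note that each Fourier quotient $M/N$ is isomorphic to a unique $\omega_I$. This lets us regroup the existing decomposition as
\[
\PP(M) = \bigosum_{I} \bigg( \bigosum_{N \,:\, M/N \cong \omega_I} V(M,N) \bigg).
\]

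The key step is the claim that $V(M,N) \subseteq W(M,IM)$ whenever $M/N \cong \omega_I$. The inclusion $V(M,N) \subseteq \PP(M,IM)$ is immediate: since $I = \text{ann}(M/N)$, we have $IM \subseteq N$, and hence $\PP(M,N) \subseteq \PP(M,IM)$. For the orthogonality condition in the definition of $W(M,IM)$, take any ideal $J$ strictly containing $I$; then $J \not\subseteq \text{ann}(M/N)$, so $JM \not\subseteq N$ and $N + JM \supsetneq N$. For $w \in V(M,N)$, the identity $\projj_{JM}\projj_N = \projj_{N+JM}$ together with $\projj_N w = w$ yields
\[
\projj_{JM}(w) = \projj_{N+JM}(w) = 0
\]
by the defining property of $V(M,N)$. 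Then for any $p \in \PP(M,JM)$, using that $\projj_{JM}$ is a self-adjoint projection, $\langle w,p\rangle = \langle w, \projj_{JM}p\rangle = \langle \projj_{JM}w, p\rangle = 0$, as required.

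Putting these facts together with the orthogonality of distinct $W(M,IM)$ from the previous lemma, we get the chain
\[
\PP(M) = \sum_I \sum_{N \,:\, M/N \cong \omega_I} V(M,N) \subseteq \sum_I W(M,IM) \subseteq \PP(M),
\]
forcing equality throughout and making the right-hand sum orthogonal. I do not anticipate a genuine technical obstacle; the only mildly subtle point is the annihilator calculation yielding $N + JM \supsetneq N$ when $J \supsetneq I$, which just amounts to observing that $I$ is precisely the annihilator of $M/N$.
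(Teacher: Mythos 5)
Your proof is correct. Note that the paper states this lemma without giving any proof at all, so there is nothing to be ``circular'' against: your argument fills that gap, and it does so using exactly the ingredients the paper has on hand at this point --- the decomposition (\ref{eqn:fourierdecomposition}) together with \autoref{thm:mainfouriercharacterization} and property (F) of \S\ref{sec:propertiesofomegai} to index the Fourier quotients by ideals $I$, the orthogonality of the $W(M,IM)$ from the immediately preceding lemma, and the inclusion $V(M,N)\subset W(M,IM)$ for $M/N\cong\omega_I$. That inclusion is precisely the paper's \autoref{lem:twodecompositions}, which is proved there independently of the present lemma, so your route introduces no circularity with the later deduction of \autoref{thm:twodecompositions}. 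The one point where you genuinely diverge is in how you establish the orthogonality half of the inclusion: the paper deduces $V(M,N)\perp\PP(M,JM)$ from the decomposition $\PP(M,JM)\cong\bigosum_{JM\subset N'}V(M,N')$ and orthogonality of distinct $V$'s, whereas you compute directly that $\projj_{JM}w=\projj_{N+JM}w=0$ for $w\in V(M,N)$ (using $\text{ann}(M/N)=I$, so $N+JM\supsetneq N$ when $J\supsetneq I$) and then move the projection across the inner product; this is legitimate since $\projj_{JM}$ is the orthogonal projection onto $\PP(M,JM)$, and you could even shortcut the last step by citing \autoref{sublem:orthogonal}, which says $\ker(\projj_{JM})=\PP(M,JM)^{\perp}$. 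Both versions are sound; yours is slightly more self-contained, the paper's reuses the $V$-machinery.
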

  
 We now give the main theorem which relates this decomposition to the
 previous one:
  
 \begin{theorem}
 \label{thm:twodecompositions}
 \[
 W(M,IM) \cong 
 \bigosum_{
 \substack{
 N \subset M\\ M/N \cong \omega_I
 }} V(M,N)
 \]
 \end{theorem}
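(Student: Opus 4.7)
The plan is to apply the orthogonal decomposition $\PP(M) = \bigosum_{N \subset M} V(M,N)$ to the subspace $W(M,IM)$ and identify which components $V(M,N)$ survive. The key observation to establish first is the following restriction principle: for any ideal $J \subset R$,
\[
\PP(M,JM) = \bigosum_{N \supset JM} V(M,N).
\]
This follows from the isomorphism $\PP(M,JM) \cong \PP(M/JM)$ combined with the decomposition of $\PP(M/JM)$ and the natural bijection between submodules of $M/JM$ and submodules of $M$ containing $JM$.

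This restriction principle, together with the pairwise orthogonality of distinct $V(M,N)$'s, yields a clean dichotomy: $V(M,N)$ is entirely contained in $\PP(M,JM)$ if $JM \subset N$, and entirely orthogonal to $\PP(M,JM)$ if $JM \not\subset N$. Applying this at $J = I$ places $W(M,IM)$ inside $\bigosum_{N \supset IM} V(M,N)$; applying it at each $J \supsetneq I$ shows that $W(M,IM)$ is exactly the sum of those components with $N \supset IM$ and $JM \not\subset N$ for all $J \supsetneq I$.

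The final step is to translate these two conditions on $N$ into the single condition $M/N \cong \omega_I$. Non-triviality of $V(M,N)$ forces $M/N$ to be a Fourier module, so by \autoref{thm:mainfouriercharacterization}, $M/N \cong \omega_{I'}$ for some ideal $I'$. Setting $I' = \text{ann}(M/N)$, the two conditions become $I \subset I'$ and $J \not\subset I'$ for every $J \supsetneq I$; the second condition applied to $J = I'$ (which would have to strictly contain $I$ if $I \neq I'$) forces $I' = I$. Since $\text{ann}(\omega_{I'}) = I'$ by property (B) of \S\ref{sec:propertiesofomegai}, this identifies $M/N$ with $\omega_I$, giving the claimed decomposition.

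The only step that requires genuine care is the restriction principle itself: one must verify that the decomposition of $\PP(M)$ into $V(M,N)$'s restricts compatibly to the subspace $\PP(M,JM)$. Once this is in hand, the rest of the argument is a routine unpacking of the definition of $W(M,IM)$ against the orthogonality properties of the $V(M,N)$'s.
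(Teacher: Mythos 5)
Your argument is correct, and it overlaps with the paper's proof only in its first half. The tool you call the restriction principle, $\PP(M,JM)=\bigosum_{N\supset JM}V(M,N)$, is precisely the identity (\ref{eqn:protodecomposition}) that the paper establishes during the induction (via $\PP(M,JM)\cong\PP(M/JM)$ and the compatibility of the $V$-spaces with this identification), and it is also exactly what the paper's \autoref{lem:twodecompositions} uses to get the inclusion $V(M,N)\subset W(M,IM)$ when $M/N\cong\omega_I$. Where you diverge is the reverse inclusion: the paper handles it by a global argument, observing that if the inclusion were strict for some $I$, then summing over all ideals would exhibit $\PP(M)\cong\bigosum_N V(M,N)$ as strictly contained in $\bigosum_I W(M,IM)\cong\PP(M)$, a contradiction. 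You instead work with a fixed $I$ and identify $W(M,IM)$ component by component: an element of $\PP(M,IM)$ is orthogonal to $\PP(M,JM)$ for all $J\supsetneq I$ exactly when its $V(M,N)$-components vanish for every $N\supset JM$ with $J\supsetneq I$ (this uses positive-definiteness of the inner product on each $V(M,N)$, which you should state), and then you translate the surviving lattice conditions on $N$ into $M/N\cong\omega_I$ via \autoref{thm:mainfouriercharacterization} and the annihilator properties of \S\ref{sec:propertiesofomegai}. Your route is more local and more explicit — it tells you exactly which components constitute $W(M,IM)$ without appealing to the completeness of both decompositions simultaneously — at the cost of invoking the Fourier-module classification and the identification $I=\mathrm{ann}(M/N)$ directly; the paper's route is shorter and defers all of that to the already-established global decompositions. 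Both are sound.
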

  
 First, we show the following lemma:
 \begin{lemma}
 \label{lem:twodecompositions}
 If $M/N \cong \omega_I$, then 
 \[
 V(M,N) \subset W(M,IM)
 \]
 \end{lemma}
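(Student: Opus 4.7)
The plan is to verify $V(M,N) \subset W(M,IM)$ directly from the two conditions defining $W(M,IM)$: namely, (i) membership in $\PP(M,IM)$, and (ii) orthogonality to $\PP(M,JM)$ for every ideal $J \supsetneq I$.

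For (i), I would invoke property (B) of \S \ref{sec:propertiesofomegai}, which gives $I = \text{ann}(\omega_I)$. Combined with $M/N \cong \omega_I$, this forces $I$ to annihilate $M/N$, i.e., $IM \subset N$. Since $IM \subset N$ implies $\PP(M,N) \subset \PP(M,IM)$, and $V(M,N) \subset \PP(M,N)$ by definition, the containment $V(M,N) \subset \PP(M,IM)$ is immediate.

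For (ii), the key identity is the commutation rule $\projj_{N_1}\projj_{N_2} = \projj_{N_1+N_2}$ combined with the adjointness of $\projj$. Given $\nu \in V(M,N)$ and $\mu \in \PP(M,JM)$ with $J \supsetneq I$, adjointness yields $\langle \nu, \mu \rangle = \langle \projj_{JM} \nu, \mu\rangle$. Since $\nu \in \PP(M,N)$ satisfies $\projj_N \nu = \nu$, we have $\projj_{JM} \nu = \projj_{JM}\projj_N \nu = \projj_{JM+N} \nu$. I claim that $JM + N \supsetneq N$: otherwise $JM \subset N$ would give $J \subset \text{ann}(M/N) = I$ (again by property (B)), contradicting $J \supsetneq I$. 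By the defining property of $V(M,N)$, $\projj_{JM+N}\nu = 0$, so the inner product vanishes.

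The only genuinely non-formal step is the strict containment $JM + N \supsetneq N$, which rests entirely on the identification $I = \text{ann}(\omega_I)$; the rest is bookkeeping with the basic properties of $\projj$ already established in the paper. I do not anticipate any substantive obstacle, though I would take care with the directionality of the inclusions $\PP(M, N_1) \subset \PP(M, N_2)$ when $N_2 \subset N_1$ and with the corresponding adjoint relations, since the indices are the easiest place to introduce a sign-type error.
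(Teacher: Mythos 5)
Your proof is correct, and its first half coincides with the paper's: both deduce $IM \subset N$ from $I = \mathrm{ann}(\omega_I)$ and $M/N \cong \omega_I$, giving $V(M,N) \subset \mathcal{P}(M,N) \subset \mathcal{P}(M,IM)$, and both reduce the rest to orthogonality against $\mathcal{P}(M,JM)$ for $J \supsetneq I$, using that $JM \not\subset N$ (equivalently $JM+N \supsetneq N$) since otherwise $J \subset \mathrm{ann}(M/N)=I$. Where you diverge is in how that orthogonality is justified: the paper cites the already-established orthogonal decomposition $\mathcal{P}(M,JM) \cong \bigoplus_{JM \subset N'} V(M,N')$ together with the pairwise orthogonality of distinct $V(M,N')$, noting that $N$ cannot occur among the $N'$; you instead verify it by a direct computation, $\langle \nu,\mu\rangle = \langle \mathrm{proj}_{JM}\nu,\mu\rangle = \langle \mathrm{proj}_{JM+N}\nu,\mu\rangle = 0$, using self-adjointness of the averaging operator, the composition rule $\mathrm{proj}_{N_1}\mathrm{proj}_{N_2}=\mathrm{proj}_{N_1+N_2}$, and the defining property of $V(M,N)$. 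Your route is slightly more self-contained — it needs only the elementary projection identities and not the completeness (spanning) of the decomposition, which in the paper rests on the inductive lemma — while the paper's version is shorter given the structural results already in place; in effect you have inlined the computation underlying the paper's orthogonality lemma. Both arguments are sound, and your cautionary remark about the inclusion-reversal $\mathcal{P}(M,N_1)\subset\mathcal{P}(M,N_2)$ for $N_2\subset N_1$ is handled correctly.
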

  
 \begin{proof} (of \autoref{lem:twodecompositions})
 By assumption, $M/N \cong \omega_I$. $I$ is the annihilator of $\omega_I$.
 Therefore, $N$ contains $IM$, but does not contain $JM$ for any $J$
 that strictly contains $I$. $V(M,N)$ is contained in $\PP(M,IM)$. It
 remains to prove the following claim:
 \begin{claim}
 Suppose that the ideal $J$ strictly contains $I$. Then $V(M,N)$ is
 orthogonal to
 $\PP(M,JM)$.
 \end{claim} The claim can be deduced from the orthogonal decomposition: 
 \[
 \PP(M,JM) \cong \bigosum_{JM\subset N'} V(M,N')
 \]
 \end{proof}
  
 \paragraph{Deduction of \autoref{thm:twodecompositions} from
 \autoref{lem:twodecompositions}}
 We have shown that 
 \[
 \bigosum_{
 \substack
 {N \subset M
 \\
 M/N \cong \omega_I
 }
 }
 V(M,N)
 \subset
 W(M,IM) 
 \]
 To prove the converse, we proceed by contradiction. Suppose that for
some $I$, the inclusion
 is strict. We take the product over all $I$. It follows that
 \[
 \bigosum_{N\subset M} V(M,N) \cong \PP(M) 
 \]
 is strictly contained in 
 \[
 \bigosum_{I} W(M,IM) \cong \PP(M)
 \]
 This gives a contradiction.
 \subsection{An important inequality}
\label{sec:importantinequality}
\renewcommand{\bar}{\Big| \Big|}
\newcommand{\numodi}{\nu \mod I}
\newcommand{\nunmodi}{\nu_N \mod I}

Suppose that $\nu$ is a measure on $M$. Denote by $\nu_N$ the
projection of $\nu$ on $V(M,N)$. Denote
\[
|\nu_N| \defeq \bar \nu_N \bar_{L^{1}(M)},
\]
the $L^1$ norm of $\nu_N$. In the proof of universality for random matrices, we will need to bound the following quantity:

\newcommand{\expression}{
\frac{1}{|M/IM|} \sum_{
\substack{ N \subset M
\\
M/N \cong \omega_I
}
}
|\nu_N|
}

\[
\expression
\]

\begin{definition}
Denote by $(\nu \mod I)$ the measure induced on $M/IM$ by $\nu$, via
push-forward. 
\end{definition}

\begin{remark}
Alternatively, recall that $\projj_{IM} \nu$ is a measure in
$\PP(M,IM)$. Via the isomorphism $\PP(M,IM) \cong \PP(M/IM)$,
$\projj_{IM} \nu$ defines a measure on $M/IM$. This measure is precisely
$(\nu \mod I)$.
\end{remark}

Define
\[
\bar \ccdot \bar_{L^2(M/IM)} \defeq \Big< \ccdot , \ccdot \Big>_{M/IM},
\]
the usual Euclidean norm.

\begin{theorem}
\label{thm:maininequality}
\begin{equation}
\label{eqn:centralinequality}
\expression
\leq
\frac{1}{
\sqrt{| \sfrac{R}{I}^{*}|}
}
\bar \nu \mod I \bar_{L^{2}(M/IM)}
\end{equation}
\end{theorem}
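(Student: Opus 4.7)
The plan is to combine two applications of Cauchy-Schwarz with the identification of $\sum_N \nu_N$ as an orthogonal projection, and to finish with a counting estimate based on Matlis duality for $R/I$. The key preliminary observation is that whenever $M/N\cong\omega_I$, the ideal $I$ annihilates the quotient, so $IM\subset N$ and $\nu_N\in\PP(M,IM)$. Under the tautological isomorphism $\PP(M,IM)\cong\PP(M/IM)$, each $\nu_N$ corresponds to a signed measure $\tilde\nu_N$ on $M/IM$; since this isomorphism scales the Euclidean inner product by $|IM|$, one has $\|\nu_N\|_{L^1(M)}=|IM|\,\|\tilde\nu_N\|_{L^1(M/IM)}$, and the orthogonality supplied by \autoref{lem:orthogonal} transfers to the $\tilde\nu_N$.

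I would first apply Cauchy-Schwarz on $M/IM$ to pass from $L^1$ to $L^2$, and then a second Cauchy-Schwarz to the sum over $N$, obtaining
\[
\sum_{M/N\cong\omega_I}\|\nu_N\|_{L^1(M)}
\le |IM|\sqrt{|M/IM|}\,\sqrt{\#\{N:M/N\cong\omega_I\}}\,\Bigl\|\sum_N\tilde\nu_N\Bigr\|_{L^2(M/IM)}.
\]
By \autoref{thm:twodecompositions}, the sum $\sum_{M/N\cong\omega_I}\nu_N$ is exactly the orthogonal projection of $\nu$ onto $W(M,IM)$; since this is a further projection of $\projj_{IM}\nu$, its $L^2(M)$-norm is bounded by $\|\projj_{IM}\nu\|_{L^2(M)}$. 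The measure on $M/IM$ corresponding to $\projj_{IM}\nu$ is $\tfrac{1}{|IM|}(\nu\bmod I)$, which gives $\bigl\|\sum_N\tilde\nu_N\bigr\|_{L^2(M/IM)}\le \tfrac{1}{|IM|}\|\nu\bmod I\|_{L^2(M/IM)}$.

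Dividing the resulting estimate by $|M/IM|$ reduces the proof to the bound $\#\{N:M/N\cong\omega_I\}\cdot|(R/I)^*|\le |M/IM|$. By orbit-stabilizer this count equals $\#\mathrm{Sur}_R(M,\omega_I)/|\mathrm{Aut}_R(\omega_I)|$, and property (D) of \S\ref{sec:propertiesofomegai} identifies $\mathrm{Aut}_R(\omega_I)$ with $(R/I)^*$ (every unit of $R/I$ lifting to a unit of $R$ by locality). Every $R$-linear map $M\to\omega_I$ factors through $M/IM$, so $\Hom_R(M,\omega_I)=\Hom_{R/I}(M/IM,\omega_I)$; and since $\omega_I$ is the dualizing module of the finite local ring $R/I$, the contravariant functor $\Hom_{R/I}(-,\omega_I)$ preserves cardinalities on finite modules, giving $|\Hom_R(M,\omega_I)|=|M/IM|$. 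In particular $\#\mathrm{Sur}_R(M,\omega_I)\le|M/IM|$, which closes the argument. The only genuinely delicate part is the consistent bookkeeping of the factor $|IM|$ under the identification $\PP(M,IM)\cong\PP(M/IM)$; once this is handled, the proof is essentially formal.
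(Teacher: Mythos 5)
Your proof is correct and takes essentially the same route as the paper: two applications of Cauchy--Schwarz, the mutual orthogonality of the components $\nu_N$, and the count $\#\mathrm{Hom}_R(M,\omega_I)=|M/IM|$ via duality over $R/I$ combined with orbit--stabilizer. The only difference is cosmetic bookkeeping: you carry the factor $|IM|$ through the ``evaluation'' identification $\PP(M,IM)\cong\PP(M/IM)$ and invoke \autoref{thm:twodecompositions}, while the paper works directly with the pushforward $\nu_N\bmod I$ (so that factor never appears) and needs only the containment of $\{N: M/N\cong\omega_I\}$ in $\{N: IM\subset N\}$ rather than the full $W(M,IM)$ description.
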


\begin{proof}
We use the Cauchy-Schwarz inequality two times. $\nu_N$ is contant on
$N$-cosets, therefore, since $M/N \cong \omega_I$, $\nu_N$ must be
constant on $IM$-cosets. Therefore,
\[
\bar \nu_N \bar_{L^1(M)} =\bar \nu_N \mod I \bar_{L^1(M/IM)}
=
\]
\[
=
\Big< \nu_N \mod I \, , \, \text{sgn}(\nu_N \mod I) \Big>_{M/IM}
\leq
\]
\[
\leq
\bar \nu_N \mod I \bar_{L^2(M/IM)} \sqrt{\# M/IM}
\]
by Cauchy-Schwarz. Define
\[
\Sset \defeq \{ N \subset M | M/N \cong \omega_I \} 
\]
The left hand side is
\newcommand{\fractionalterm}
{
\sqrt{
\frac{\#\Sset}{|M/IM|}
}
} 
\[
\frac{1}{|M/IM|} \sum_{N \in \Sset} |\nunmodi|
\leq
\frac{1}{\sqrt{|M/IM|}} \sum_{N \in \Sset} \bar\nunmodi\bar_{L^2(M/IM)}
\leq
\]
\begin{equation}
\label{eqn:lastexpression}
\leq
\frac{
\sqrt{\# \Sset}
}
{\sqrt{|M/IM|}}
\sqrt{
\sum_{N \in \Sset} \bar \nunmodi \bar^2_{L^2(M/IM)}
}
\end{equation}
where the last equality follows again by Cauchy-Schwarz. Since the
$\nu_N$ are orthogonal, we can rewrite the preceding expression as:
\begin{equation}
\sqrt{
\frac{\# \Sset}{|M/IM|}
}
\bar
\sum_{N \in \Sset} \nunmodi
\bar_{L^{2}(M/IM)}
\leq
\end{equation}
\[
\leq
\fractionalterm
\bar
\sum_{IM \in N} \nunmodi
\bar_{L^2(M/IM)}=
\fractionalterm
\bar
\nu \mod I
\bar_{L^2(M/IM)}
\]
Finally, we note that
\[
\fractionalterm \leq \sqrt{
\frac{\#Sur(M,\omega_I)}{ \#Hom(M,\omega_I)   |\sfrac{R}{I}^{*} | }
}
\leq
\frac{1}{|\sfrac{R}{I}^{*}|}
\]
\end{proof}

\begin{remark}
Although this does not substantially improve the bound, we remark that
\[
\sum_{N \in \Sset} \nu_N
\]
is the projection of the measure $\nu$ on $W(M,IM)$. Hence, in \autoref{thm:maininequality}, we can replace $(\nu \mod I)$ by the projection of $(\nu \mod I)$ onto $W(M/IM,0)$.
\end{remark}

 \subsection{A uniform bound on the $L^1$ norm of a Fourier component}
 \label{sec:extrasection}

In \S \ref{sec:conditional proof of central inequality}, we also need another inequality.

\begin{lemma}
\label{lem:secondinequality}
If $\nu$ is any probability measure, then 
\begin{equation}
\label{eqn:centralinequalitytwo}
\bar\nu_{\chi}\bar_{L^1(M)} \leq \sqrt{|im(\chi)|}
\end{equation}
\end{lemma}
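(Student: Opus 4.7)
The plan is to reduce the $L^1$ bound to an $L^2$ bound by Cauchy--Schwarz, and then to bound the $L^2$ norm using the fact that $\nu_\chi$ is an orthogonal projection of the probability measure $\nu$ onto a subspace of $\mathcal{P}(M,\ker\chi)$.

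First, I would observe that because $\nu_\chi$ lives in $V(M,\chi)\subset\mathcal{P}(M,\ker\chi)$, it is constant on $\ker\chi$-cosets. Its support is therefore a union of cosets, and in particular has size at most $|M|=|\ker\chi|\cdot|\mathrm{im}\,\chi|$. Applying the Cauchy--Schwarz inequality on $M$, supported against the indicator of $M$, gives
\[
\|\nu_\chi\|_{L^1(M)}\;\leq\;\sqrt{|M|}\cdot\|\nu_\chi\|_{L^2(M)}.
\]
It therefore suffices to prove $\|\nu_\chi\|_{L^2(M)}\leq 1/\sqrt{|\ker\chi|}$, since then the right-hand side collapses to $\sqrt{|M|/|\ker\chi|}=\sqrt{|\mathrm{im}\,\chi|}$.

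Next I would use the nested inclusion $V(M,\chi)\subset\mathcal{P}(M,\ker\chi)$. Since $\nu_\chi$ is the orthogonal projection of $\nu$ onto $V(M,\chi)$, and since orthogonal projection onto a subspace is contracted by further projection onto a larger subspace, we obtain
\[
\|\nu_\chi\|_{L^2(M)}\;\leq\;\|\mathrm{proj}_{\ker\chi}\nu\|_{L^2(M)}.
\]
The measure $\mathrm{proj}_{\ker\chi}\nu$ takes the constant value $\bar\nu(\bar m)/|\ker\chi|$ on each $\ker\chi$-coset $\bar m$, where $\bar\nu$ denotes the pushforward of $\nu$ to $M/\ker\chi$. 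A direct calculation gives
\[
\|\mathrm{proj}_{\ker\chi}\nu\|_{L^2(M)}^2\;=\;\sum_{m\in M}\Bigl(\tfrac{\bar\nu(\bar m)}{|\ker\chi|}\Bigr)^2\;=\;\tfrac{1}{|\ker\chi|}\sum_{\bar m}\bar\nu(\bar m)^2\;\leq\;\tfrac{1}{|\ker\chi|},
\]
where the final inequality uses that $\bar\nu$ is a probability measure on $M/\ker\chi$, so $\sum_{\bar m}\bar\nu(\bar m)^2\leq \sum_{\bar m}\bar\nu(\bar m)=1$. Combining the two displays yields the claimed inequality $\|\nu_\chi\|_{L^1(M)}\leq\sqrt{|\mathrm{im}\,\chi|}$.

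There is no substantive obstacle here: the argument is a two-step Cauchy--Schwarz together with the contractivity of orthogonal projection. The only mildly delicate point is bookkeeping the normalizations of $L^1$ and $L^2$ norms under the averaging map $\mathrm{proj}_{\ker\chi}$ and the pushforward to $M/\ker\chi$, but these are purely formal once one recalls that $\mathrm{proj}_N$ divides the pushforward mass by $|N|$.
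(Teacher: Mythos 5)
Your proof is correct and follows essentially the same route as the paper: a Cauchy--Schwarz step contributing the factor $\sqrt{|M/\ker\chi|}=\sqrt{|\mathrm{im}\,\chi|}$, followed by an $L^2$ bound on $\nu_\chi$ in terms of $\nu$ coming from the orthogonality of the decomposition. The only cosmetic difference is that you work with norms on $M$ and invoke contractivity of the nested orthogonal projections $V(M,\chi)\subset\mathcal{P}(M,\ker\chi)$, whereas the paper pushes forward to $M/\ker\chi$ and phrases the same fact as orthogonality of $\nu_\chi \bmod \ker\chi$ and $(\nu_\chi-\nu)\bmod \ker\chi$; these are equivalent.
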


\noindent
To show (\ref{eqn:centralinequalitytwo}), we first make the following observation:

\begin{claim}
\[\nu_{\chi} \mod ker(\chi)\] is orthogonal to \[(\nu_{\chi} - \nu) \mod ker(\chi).\]
\end{claim}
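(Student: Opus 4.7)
The plan is to first establish the stated Claim and then conclude via Pythagoras in $L^2$ followed by an $L^1$--$L^2$ Cauchy--Schwarz on the finite set $M/\ker(\chi)$. None of the steps is deep, but the only one with genuine content is the Claim; everything downstream is routine manipulation of norms on a finite set.

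To prove the Claim, I would combine two facts from \autoref{lem:measuresonmodules}: first, the orthogonality of distinct Fourier components, which gives $\langle \nu_\chi,\, \nu - \nu_\chi\rangle_{L^2(M)} = 0$; and second, the fact that $\nu_\chi \in V(M,\chi)$ is constant on $\ker(\chi)$-cosets. Writing the inner product on $M$ as an average over cosets yields the standard pushforward identity
\[
\langle \nu_\chi,\, \mu \rangle_{L^2(M)} \;=\; \frac{1}{|\ker(\chi)|}\,\langle \nu_\chi \bmod \ker(\chi),\, \mu \bmod \ker(\chi) \rangle_{L^2(M/\ker(\chi))}
\]
valid for any signed measure $\mu$ on $M$. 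Specializing to $\mu = \nu - \nu_\chi$ gives the Claim.

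With the Claim in hand, I would decompose
\[
\nu \bmod \ker(\chi) \;=\; \nu_\chi \bmod \ker(\chi) \;+\; (\nu - \nu_\chi) \bmod \ker(\chi)
\]
and apply Pythagoras in $L^2(M/\ker(\chi))$ to obtain $\|\nu_\chi \bmod \ker(\chi)\|_{L^2} \leq \|\nu \bmod \ker(\chi)\|_{L^2}$. Since $\nu$ is a probability measure on $M$, its pushforward $\nu \bmod \ker(\chi)$ is a probability measure on the finite set $M/\ker(\chi)$, and every probability measure on a finite set has $L^2$ norm at most $1$ (because $\sum p_y^2 \leq \sum p_y = 1$ when $0 \leq p_y \leq 1$). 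Therefore $\|\nu_\chi \bmod \ker(\chi)\|_{L^2} \leq 1$.

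Finally, since $\nu_\chi$ is constant on $\ker(\chi)$-cosets, one has $\|\nu_\chi\|_{L^1(M)} = \|\nu_\chi \bmod \ker(\chi)\|_{L^1(M/\ker(\chi))}$, and an $L^1$--$L^2$ Cauchy--Schwarz on the finite set $M/\ker(\chi) \cong \im(\chi)$ gives
\[
\|\nu_\chi \bmod \ker(\chi)\|_{L^1} \;\leq\; \sqrt{|\im(\chi)|} \cdot \|\nu_\chi \bmod \ker(\chi)\|_{L^2} \;\leq\; \sqrt{|\im(\chi)|},
\]
which is the desired inequality. The main obstacle is the Claim and the careful bookkeeping of pushforward norms; once those are set up correctly, the Pythagoras-plus-Cauchy-Schwarz chain is immediate.
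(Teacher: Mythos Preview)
Your proof of the Claim is correct and is essentially the paper's argument: the paper phrases the same computation via the projection operator $\projj_{\ker(\chi)}$, noting that $\nu_\chi \perp (\nu_\chi - \projj_{\ker(\chi)}\nu)$ and $\nu_\chi = \projj_{\ker(\chi)}\nu_\chi$, which is exactly your pushforward identity in adjoint form. Your downstream deduction of the $\sqrt{|\im\chi|}$ bound (Pythagoras, then $L^1$--$L^2$ Cauchy--Schwarz) also matches the paper's proof of \autoref{lem:secondinequality} line for line.
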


\begin{proof} (of claim)
This follows from the construction of $\nu_{\chi}$. Indeed, by construction,   $\nu_{\chi}$ is orthogonal to $(\nu_{\chi} - \projj_{ker(\chi)} \nu)$. Now, by construction $\nu_{\chi} = \projj_{ker(\chi)} \nu_{\chi}$. Therefore, 
\[
\projj_{ker(\chi)} \nu_{\chi}
\]
is orthogonal to 
\[
\projj_{ker(\chi)}(\nu_{\chi}-\nu)
\]
The claim follows.
\end{proof}

\begin{proof}(of \autoref{lem:secondinequality})
\[
\bar\nu_{\chi}\bar_{L^1(M)}=\bar\projj_{ker(\chi)} \nu_{\chi}\bar_{L^1(M)} = \bar \nu_{\chi} \mod ker(\chi) \bar_{L^1(M/ker(\chi))} \leq
\]
\begin{equation}
\label{eqn:cauchyschwarz}
\leq \sqrt{|M/ker(\chi)|}\bar \nu_{\chi} \mod ker(\chi) \bar_{L^2(M/ker(\chi))}
\end{equation}
The last inequality follows by applying the Cauchy-Schwarz inequality. Now the preceding claim shows that 
\[
\bar \nu_{\chi} \mod ker(\chi)\bar_{L^2(M/ker(\chi))} \leq \bar\nu \mod ker(\chi)\bar_{L^2(M/ker(\chi))} 
\]
But the right hand side is the $L^2$ norm of a probability measure, and is therefore bounded above by $1$. Hence (\ref{eqn:cauchyschwarz}) is bounded above by
\[
\sqrt{|M/ker(\chi)|} = \sqrt{|im(\chi)|}.
\]
\end{proof}


\bibliographystyle{alpha}
\bibliography{UniversalityBibliography}

\end{document}